\pdfoutput=1
\documentclass[intlimits,a4paper,10pt,reqno]{elsarticle-mr}

\usepackage[%
   colorlinks=true,
   urlcolor=rltblue,       
   filecolor=rltgreen,     
   citecolor=rltgreen,     
   linkcolor=rltred,       
   bookmarks=true,
   unicode,
   plainpages=false,
   ]{hyperref}

\usepackage[operator,rose,frak]{paper_diening}
\usepackage[active]{srcltx}
\usepackage{amsmath,color}
\usepackage{amssymb,amsthm,amsxtra,amscd}
\usepackage{microtype}
\usepackage{esint}
\usepackage[applemac]{inputenc}

\definecolor{rltred}{rgb}{0.75,0,0}
\definecolor{rltgreen}{rgb}{0,0.5,0}
\definecolor{rltblue}{rgb}{0,0,0.75}

\newtheorem{Def}[equation]{Definition}
\newtheorem{Sa}[equation]{Theorem}
\newtheorem{Lem}[equation]{Lemma}
\newtheorem{Prop}[equation]{Proposition}
\newtheorem{Bem}[equation]{Remark}
\newtheorem{Kor}[equation]{Corollary}
\newtheorem{Vss}[equation]{Assumption}


\newenvironment{Bew}{\begin{proof}[Proof]}{\end{proof}}


\newcommand{\R}{\mathbb{R}}
\newcommand{\E}{\mathbf{E}}
\newcommand{\D}{\mathbf{D}}
\newcommand{\Rr}{\mathbf{R}}
\newcommand{\w}{\boldsymbol\omega}

\def\dint{\fint}

\DeclareMathOperator{\Div}{\operatorname{div}}

\newcommand{\Ss}{\mathbf{S}\big(\mathbf{D}\mathbf{v},\mathbf{R}(\mathbf{v},\boldsymbol\omega),\mathbf{E}\big)}

\newcommand{\Sn}{\mathbf{S}\big(\mathbf{D}\mathbf{v}^n,\mathbf{R}(\mathbf{v}^n,\boldsymbol\omega^n),\mathbf{E}\big)}

\newcommand{\N}{\mathbf{N}(\nabla\boldsymbol\omega,\mathbf{E})}

\newcommand{\Nn}{\mathbf{N}(\nabla\boldsymbol\omega^n,\mathbf{E})}

\newcommand{\anti}{{\ensuremath{\mathrm{skew}}}}
\newcommand{\bell}{\boldsymbol{\ell}}


\newcommand{\Lp}{L^p(\Omega)}

\newcommand{\XGqpmu}{X^{q(\cdot),p(\cdot)}_{{\nabla}}(\Omega;\sigma)}
\newcommand{\XGqpmuo}{{\mathaccent23 X}^{q(\cdot),p(\cdot)}_{{\nabla}}(\Omega;\sigma)}

\newcommand{\XGpEo}{{\mathaccent23 X}^{p^-,p(\cdot)}_{{\nabla}}(\Omega;\vert\mathbf{E}\vert^2)}
\newcommand{\XGqp}{X^{q(\cdot),p(\cdot)}_{{\nabla}}(\Omega)}
\newcommand{\XGqpo}{{\mathaccent23 X}^{q(\cdot),p(\cdot)}_{{\nabla}}(\Omega)}

\newcommand{\XDqp}{X^{q(\cdot),p(\cdot)}_{\mathbf{D}}(\Omega)}
\newcommand{\XDqpo}{{\mathaccent23 X}^{q(\cdot),p(\cdot)}_{\mathbf{D}}(\Omega)}
\newcommand{\XDp}{X^{p^-,p(\cdot)}_{\mathbf{D}}(\Omega)}
\newcommand{\XDpo}{{\mathaccent23 X}^{p^-,p(\cdot)}_{\mathbf{D}}(\Omega)}
\newcommand{\VDqpo}{{\mathaccent23 X}^{q(\cdot),p(\cdot)}_{\mathbf{D},\divo}(\Omega)}
\newcommand{\VDpo}{{\mathaccent23 X}^{p^-,p(\cdot)}_{\mathbf{D},\divo}(\Omega)}


\newcommand{\Io}{\int_{\Omega}}

\newcommand{\Lpwx}{L^{p(\cdot)}(\Omega;\sigma)}

\numberwithin{equation}{section}

\begin{document}
\begin{frontmatter}

\title{Existence of steady solutions for a model for micropolar electrorheological fluid flows with not globally $\log$--Hölder continuous shear exponent}

\author[ak]{Alex Kaltenbach}
\ead{alex.kaltenbach@mathematik.uni-freiburg.de}

\author[mr]{\corref{cor1}Michael R\r u\v zi\v cka}
\ead{rose@mathematik.uni-freiburg.de}

\cortext[cor1]{Corresponding author}

\address[mr]{Institute of Applied Mathematics,
  Albert--Ludwigs--University Freiburg, Ernst--Zermelo--Str.~1, D-79104 Freiburg,
  GERMANY.}  

\address[ak]{Institute of Applied Mathematics,
	Albert--Ludwigs--University Freiburg, Ernst--Zermelo--Str.~1, D-79104 Freiburg,
	GERMANY.}

\begin{abstract}
  In this paper, we study the existence of weak solutions to a steady
  system that describes the motion of a micropolar electrorheological
  fluid. The constitutive relations for the stress tensors belong to
  the class of generalized Newtonian fluids.  The analysis of this
  particular problem leads naturally to weighted variable exponent
  Sobolev spaces. We establish the existence of
  solutions for a material function $\hat p$ that is
  $\log$--Hölder continuous and an electric field $\bE$ for that 
  $\vert \bfE\vert^2$ is bounded and smooth. Note that these conditions do not imply that
  the variable shear exponent $p=\hat p\circ\vert \bfE\vert^2$ is
  globally $\log$--Hölder continuous.
\end{abstract}

\begin{keyword}
Existence of solutions, Lipschitz truncation, weighted function spaces, micropolar
  electrorheological fluids.

  \MSC 35Q35  
  \sep 35J92  
  \sep
  46E35  
\end{keyword}

\end{frontmatter}

\section{Introduction}\label{introduction}
In this paper we establish the existence of solutions of the
system\footnote{We denote by $\bfvarepsilon$ the isotropic third order
	tensor and by $\bfvarepsilon:\mathbf{S}$ the vector with the
	components $\vep_{ijk}S_{jk}$, $i=1,\ldots,d$, where the summation
	convention over repeated indices is used.}
\begin{align}
	\begin{aligned}\label{NS}
		-\Div \mathbf{S}+\Div(\mathbf{v}\otimes\mathbf{v})
		+\nabla\pi&=\mathbf{f} &&\text{in} \ \Omega\,,
		\\
		\Div \mathbf{v}&=0 &&\text{in} \ \Omega\,,
		\\
		-\Div\mathbf{N}+\Div(\boldsymbol{\omega}\otimes
		\mathbf{v})&=\boldsymbol{\ell}-\bfvarepsilon:\mathbf{S}
		&\quad&\text{in} \ \Omega\,,\\
		\mathbf{v}=\bfzero\,, \quad \boldsymbol\omega &=\bfzero
		&&\text{on}\ \partial\Omega \,.
	\end{aligned}
\end{align}
Here, $\Omega\subseteq \setR^d$, $d\ge 2$, is a bounded domain. The three
equations in \eqref{NS} represent the balance of momentum, mass and angular
momentum for an incompressible, micropolar electrorheological
fluid.  In it, $\bv$ denotes the velocity, $\w$ the
micro-rotation, $\pi$ the pressure, $\bS$ the mechanical extra
stress tensor, $\bN$ the couple stress tensor, $\bell$ the
electromagnetic couple force, ${\ff=\tilde \ff + \chi^E\divo (\bE
	\otimes \bE)}$ the body force, where $\tilde \ff$ is the mechanical
body force, $\chi^E$ the dielectric susceptibility and $\bE$ the
electric field. The electric field $\bE$ solves the quasi-static
Maxwell's equations
\begin{align}
	\begin{aligned}\label{maxwell}
		&& \Div \mathbf{E}&=0 &&\text{in}\ \Omega\,,
		\\
		&& \curl \mathbf{E}&=\bfzero &&\text{in}\ \Omega\,,
		\\
		&& \mathbf{E}\cdot \mathbf{n}&=\mathbf{E}_0\cdot \mathbf{n}
		&\quad&\text{on}\ \partial\Omega\,,
	\end{aligned}
\end{align}
where $\mathbf{n}$ is the outer normal vector field of
$\partial \Omega$ and $\mathbf{E}_0$ is a given~electric~field. The
system \eqref{NS}, \eqref{maxwell} is the steady version of a model
derived in \cite{win-r}, which generalizes previous models of
electrorheological fluids in \cite{RR2},
\cite{rubo}.~The~model~in~\cite{win-r} contains a more realistic
description of the dependence of the electrorheological effect on the
direction of the electric field. Since Maxwell's equations
\eqref{maxwell} are separated from the balance laws \eqref{NS} and due
to the well developed \mbox{mathematical} theory for Maxwell's equations (cf.~Section~\ref{sec:E}), we can
view the electric~field~$\bE$ with appropriate properties as a given
quantity in \eqref{NS}. As a consequence,~we concentrate in this paper on the investigation of the mechanical properties~of~the
electrorheological fluid governed by \eqref{NS}.

A representative example for a constitutive
relation for the stress tensors in \eqref{NS} reads, e.g., (cf.~\cite{win-r},
\cite{rubo})
\begin{align}
	\hspace*{-1mm}  
	\begin{aligned}\label{eq:SN-ex}
		\mathbf{S}&=(\alpha_{31}+\alpha_{33}\vert\mathbf{E}\vert^2)
		(1+\vert\mathbf{D}\vert)^{p-2}\mathbf{D}+ \alpha_{51}
		(1+\vert\mathbf{D}\vert)^{p-2}\big (\mathbf{D} \bE \otimes \bE +
		\bE \otimes \bD\bE \big)\hspace*{-5mm}
		\\
		&\quad +
		\alpha_{71}\vert\mathbf{E}\vert^2(1+\vert\mathbf{R}\vert)^{p-2}
		\mathbf{R} + \alpha_{91} (1+\vert\mathbf{R}\vert)^{p-2}\big
		(\mathbf{R} \bE \otimes \bE + \bE \otimes \bR\bE \big)\,,
		\\
		\mathbf{N}&=(\beta_{31}+\beta_{33}\vert\mathbf{E}\vert^2)
		(1+\vert\nabla\boldsymbol\omega\vert)^{p-2}\nabla\boldsymbol\omega
		\\
		&\quad + \beta_{51}(1+\vert\nabla \w\vert)^{p-2}\big ((\nabla \w
		)\bE \otimes \bE + \bE \otimes (\nabla \w)\bE \big)\,,
	\end{aligned}
\end{align}
with material constants $\alpha_{31},\alpha_{33},\alpha_{71},\beta_{33}>
0$ and $\beta_{31}\ge 0$ and a shear exponent $p=\hat p \circ
\abs{\bE}^2$, where $\hat p$ is a material function. 
In \eqref{eq:SN-ex}, we employed the common notation\footnote{Here,
	$\bfepsilon:\bv$ denotes the tensor with components $\vep_{ijk}v_k$,
	$i,j=1,\ldots,d$.} $\bD = (\nabla \bv)^\sym$ and
${\bR=\bR(\bv,\w):= (\nabla \bv)^\anti +\bfvarepsilon :\w }$. 

Micropolar fluids have been introduced by Eringen in the sixties
(cf.~\cite{eringen-book}).~A model for electrorheological fluids was
proposed in \cite{RR1},~\cite{RR2},~\cite{rubo}.~While~there~exist
many investigations of micropolar fluids or
electrorheological~fluids (cf.~\cite{Lukaszewicz},~\cite{rubo}), there
exist to our knowledge no mathematical investigations of
steady~motions~of micropolar electrorheological fluids except the PhD
thesis \cite{frank-phd},~the~diploma~thesis \cite{weber-dipl}, the
paper \cite{erw} and the recent result \cite{kr-micro}. Except for
the latter contribution these investigations only treat the
case of constant shear exponents.

For the existence theory of problems of similar type as \eqref{NS},
the Lipschitz truncation technique (cf.~\cite{fms2}, \cite{dms}) has
proven to be very~powerful.~This~method is available in the setting of
Sobolev spaces (cf.~\cite{fms}, \cite{dms}, \cite{john}), variable
exponent Sobolev spaces (cf.~\cite{dms}, \cite{john}), solenoidal
Sobolev spaces (cf.~\cite{bdf}), Sobolev spaces with
Muckenhoupt~weights~(cf.~\cite{erw}) and
functions~of~bounded~\mbox{variation} (cf.~\cite{BDG19}).  Since, in
general, $\vert \bE\vert^2$ does not belong to~the~correct Muckenhoupt
class, the results in \cite{erw} are either sub-optimal with respect
to the lower bound for the shear~exponent~$p$ or require additional
assumptions on the electric field~$\bE$. These deficiencies are
overcome in \cite{kr-micro} by an thorough localization of the
arguments. Moreover, \cite{kr-micro} contains the first treatment of
the full model for micropolar electrorheological fluids in weighted
variable exponent spaces under the assumption that the shear exponent
is globally $\log$--H\"older continuous. The present paper relaxes this
condition and shows existence of solutions under the only assumption
that the electric field $\bE$ is bounded and smooth.

\smallskip \textit{This paper is organized as follows:} In Section \ref{veroeffentlichung1}, we
introduce the functional setting for the treatment of the variable
exponent weighted case, and
collect auxiliary results. Then, Section~\ref{sec:E} is devoted to the analysis of the
electric field, while Section
\ref{sec:stab} is devoted to the weak stability of the stress
tensors. Eventually, in Section \ref{sec:main}, we deploy the Lipschitz
truncation technique to prove the existence of weak solutions of
\eqref{NS}, \eqref{maxwell}.

\section{Preliminaries}\label{veroeffentlichung1}

\subsection{Basic notation and standard function spaces}

We employ the customary Lebesgue spaces $\Lp$, $1\leq p\leq\infty$, and
Sobolev spaces $W^{1,p}(\Omega)$, $1\leq p\leq\infty$, where
$\Omega\subseteq \R^d$, $d\in \mathbb{N}$, is a bounded domain. We
denote by $\Vert\cdot\Vert_p$ the norm in $\Lp$ and by
$\Vert\cdot\Vert_{1,p}$ the norm~in~$W^{1,p}(\Omega)$. 
The space $W^{1,p}_0(\Omega)$, $1\le p <\infty$, is
defined as the completion of $C_0^\infty(\Omega)$ with respect to the
gradient norm $\Vert\nabla \cdot\Vert_{p}$, while the space
$W^{1,p}_{0,\divo}(\Omega)$, $1\le p <\infty$, is the closure of
$C_{0,\textup{div}}^\infty(\Omega):=\{\bu\in C_0^\infty(\Omega)\fdg
\Div \bu=0\}$ with respect to the gradient norm
$\Vert\nabla
\cdot\Vert_{p}$.~For~a~bounded~Lipschitz~domain~${G\subseteq \R^d} $,
we define $W^{1,\infty}_0(G)$ as the subspace of functions
$u \in W^{1,\infty}(G)$ having a vanishing trace, i.e.,
$u|_{\partial G}=0$. We use small boldface letters, e.g.,~$\bv$, to
denote vector-valued functions and capital boldface letters,
e.g.,~$\bS$, to denote~\mbox{tensor-valued}~functions\footnote{The
	only exception of this is the electric vector field which is denoted
	as usual by $\bE$.}. However, we do not distinguish between scalar,
vector-valued and tensor-valued function spaces in the notation. The
standard scalar product between vectors is denoted by $\bv\cdot \bu$,
while the standard scalar product between tensors is
denoted~by~$\bA:\bB$. For a normed linear vector space $X$, we denote
its topological dual space by $X^*$.  Moreover, we employ the notation
$\langle u,v\rangle:=\Io uv\,dx$, whenever the right-hand side is
well-defined.  We denote by $\vert M\vert$ the $d$--dimensional
Lebesgue measure of a measurable set $M$. The mean value of a locally
integrable function $u\in L^1_{\loc}(\Omega)$ over a measurable set
$M\subseteq\Omega$ is denoted by
$\dint_M u\,dx:=\frac{1}{\vert M\vert} \int_M u\,dx$. By
$L^p_0(\Omega)$ and $C^\infty_{0,0}(\Omega)$, resp., we denote the
subspace of $L^p(\Omega)$~and~$C^\infty_{0}(\Omega)$,~resp.,
consisting of all functions $u$ with
vanishing~mean~value,~i.e.,~${\dint_\Omega u\,dx =0}$.

\subsection{Weighted variable exponent Lebesgue and Sobolev spaces}

In this section, we will give a brief introduction into weighted~variable~\mbox{exponent} Lebesgue and Sobolev spaces.

Let $\Omega\subseteq \mathbb{R}^d$, $d\in \mathbb{N}$, be an open set
and $p:\Omega\to [1,\infty)$ a \mbox{measurable~function}, called
variable exponent in $\Omega$. By $\mathcal{P}(\Omega)$, we denote the
set of all variable exponents. For $p\in \mathcal{P}(\Omega)$, we denote by
${p^+:=\textup{ess\,sup}_{x\in \Omega}{p(x)}}$~and~${p^-:=\textup{ess\,inf}_{x\in \Omega}{p(x)}}$ its limit exponents. By $\mathcal{P}^{\infty}(\Omega):=\{p\in\mathcal{P}(\Omega)\fdg p^+<\infty\}$,
we denote the set of all bounded variable exponents. 

A weight $\sigma$ on $\R^d$
is a locally integrable function satisfying $0<\sigma<\infty$
a.e.\footnote{If not stated otherwise, a.e.~is meant with respect to
	the Lebesgue measure.}. To each weight $\sigma$, we associate a Radon
measure $\nu_{\sigma}$ defined via $\nu_{\sigma} (A):=\int _A \sigma\, dx$
for every measurable set $A\subseteq \mathbb{R}^d$.

For an exponent $p\in \mathcal{P}^{\infty}(\Omega)$ and a weight $\sigma$, the weighted variable exponent Lebesgue space $\Lpwx$ consists of all measurable~functions ${u:\Omega\to \mathbb{R}}$, i.e., $u\in \mathcal{M}(\Omega)$, for which the modular 
\begin{align*}
	\rho_{p(\cdot),\sigma}(u):=\int_{\Omega}{\vert u(x)\vert^{p(x)}\,d\nu_{\sigma}(x)}:=\int_{\Omega}{\vert u(x)\vert^{p(x)}\sigma(x)\,dx}
\end{align*}
is finite, i.e., we have that $\Lpwx:=\{u\in \mathcal{M}(\Omega)\fdg \sigma^{1/p(\cdot)}u\in L^{p(\cdot)}(\Omega)\}$.~We equip $\Lpwx$ with the Luxembourg norm 
\begin{align*}
	\|u\|_{p(\cdot),\sigma}:=\inf\big\{\lambda > 0\fdg \rho_{p(\cdot),\sigma}(u/\lambda)\leq 1\big\}\,,
\end{align*}
which turns $\Lpwx$ into a separable Banach space (cf. \cite[Thm.~3.2.7~\& Lem.~3.4.4]{lpx-book}). In addition, if ${\sigma =1}$~a.e.~in~$\Omega$, then we employ the abbreviations $L^{p(\cdot)}(\Omega):=\Lpwx$, $\rho_{p(\cdot)}(u):=\rho_{p(\cdot),\sigma}(u)$ and ${\|u\|_{p(\cdot)}:=\|u\|_{p(\cdot),\sigma}}$ for every ${u\in L^{p(\cdot)}(\Omega)}$. The identity $\rho_{p(\cdot),\sigma}(u)\!=\!\rho_{p(\cdot)}(u\sigma^{1/p(\cdot)})$~implies that
\begin{align}
	\|u\|_{p(\cdot),\sigma}=\|u\sigma^{1/p(\cdot)}\|_{p(\cdot)}\label{eq:mo-no}
\end{align}
for all $u\in \Lpwx$. If $p\in \mathcal{P}^{\infty}(\Omega)$, in addition, satisfies $p^->1$, then~$\Lpwx$ is reflexive (cf. \cite[Thm. 3.4.7]{lpx-book}).  The dual space $(\Lpwx)^*\!$ can be
identified with respect to $\skp{\cdot}{\cdot}$ with $
L^{p'(\cdot)}(\Omega;\sigma')$, where
$\sigma':=\sigma^{\smash{\frac{-1}{p(\cdot)-1}}}$. The identity
\eqref{eq:mo-no} and Hölder's inequality in variable exponent Lebesgue
spaces (cf. \cite[Lem.~3.2.20]{lpx-book}) yield for every $u\in \Lpwx$
and  ${v \in L^{p'(\cdot)}(\Omega;\sigma')}$, there holds 
\begin{equation*}
	\vert \skp{u}{v}\vert\le 2\,\norm{u}_{p(\cdot),\sigma} \norm{v}_{p'(\cdot),\sigma'}\,.
\end{equation*}

The relation between the modular and the norm is clarified by the
following lemma, which is called norm-modular unit ball property.
\begin{Lem}
	\label{lem:unit_ball_px}
	Let $\Omega\subseteq \R^d$, ${d\in
		\mathbb{N}}$,~be~open and let $p\in
	\mathcal{P}^{\infty}(\Omega)$. Then, we have for
	any $u \in \Lpwx$:
	\begin{enumerate}[(i)]
		\item $\norm{u}_{p(\cdot),\sigma} \leq 1$ if and only if   $\rho_{p(\cdot),\sigma} (u) \leq 1$. 
		\item \label{itm:unit_ball2pxa} If $\norm{u}_{p(\cdot),\sigma} \leq 1$, then
		$\rho _{p(\cdot),\sigma} (u) \leq
		\norm{u}_{p(\cdot),\sigma}$.\label{itm:unit_ball2pxb}
		\item If $ 1< \norm{u}_{p(\cdot),\sigma}$, then
		$\norm{u}_{p(\cdot),\sigma} \leq \rho _{p(\cdot),\sigma} (u)$.
		\item $\smash{\norm{u}_{p(\cdot),\sigma}^{p^-} -1 \le \rho _{p(\cdot),\sigma} (u) \leq
		\norm{u}_{p(\cdot),\sigma}^{p^+} +1}$.
	\end{enumerate}
\end{Lem}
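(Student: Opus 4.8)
The plan is to reduce the whole lemma to the single identity
\begin{equation*}
	\rho_{p(\cdot),\sigma}\bigl(u/\norm{u}_{p(\cdot),\sigma}\bigr) = 1\qquad\text{for }u\in\Lpwx\setminus\{0\}\,,
\end{equation*}
and then read off (i)--(iv) from it together with convexity of the modular. First I would note that, by \eqref{eq:mo-no} and the identity $\rho_{p(\cdot),\sigma}(u)=\rho_{p(\cdot)}(u\sigma^{1/p(\cdot)})$ recalled above, the map $u\mapsto u\sigma^{1/p(\cdot)}$ is an isometric bijection $\Lpwx\to L^{p(\cdot)}(\Omega)$ which intertwines $\rho_{p(\cdot),\sigma}$ and $\rho_{p(\cdot)}$; hence it suffices to treat the unweighted case $\sigma\equiv 1$, which is the classical norm--modular unit ball property and could alternatively be quoted from \cite[Sec.~3.2]{lpx-book}.

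So fix $u\in L^{p(\cdot)}(\Omega)$ with $u\neq 0$ and put $\varphi(\alpha):=\rho_{p(\cdot)}(\alpha u)=\int_\Omega\alpha^{p(x)}\abs{u(x)}^{p(x)}\,dx$ for $\alpha\ge 0$. The core claim is that $\varphi$ is finite, continuous and strictly increasing on $[0,\infty)$, with $\varphi(0)=0$ and $\varphi(\alpha)\to\infty$ as $\alpha\to\infty$. Finiteness for $\alpha\le 1$ is clear from $\varphi(\alpha)\le\varphi(1)=\rho_{p(\cdot)}(u)<\infty$, and for $\alpha>1$ from $\varphi(\alpha)\le\alpha^{p^+}\rho_{p(\cdot)}(u)<\infty$, using $p^+<\infty$. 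Continuity on each $[0,\alpha_0]$ follows from dominated convergence via the bound $\alpha^{p(x)}\abs{u(x)}^{p(x)}\le\max(1,\alpha_0^{p^+})\,\abs{u(x)}^{p(x)}$, whose right-hand side is integrable since $u\in L^{p(\cdot)}(\Omega)$. Strict monotonicity holds because the integrand of $\varphi(\alpha_2)-\varphi(\alpha_1)$ is positive on $\{\abs u>0\}$, a set of positive measure; and $\varphi(\alpha)\ge\alpha^{p^-}\rho_{p(\cdot)}(u)\to\infty$ as $\alpha\to\infty$ since $\rho_{p(\cdot)}(u)>0$ and $p^-\ge 1$. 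By the intermediate value theorem there is a unique $\alpha^\ast\in(0,\infty)$ with $\varphi(\alpha^\ast)=1$, and the definition of the Luxembourg norm forces $\norm{u}_{p(\cdot)}=1/\alpha^\ast$, which is the displayed identity (the case $u=0$ being trivial throughout).

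Granting this, the four items are immediate. For (i) with $u\neq 0$, strict monotonicity of $\varphi$ yields $\rho_{p(\cdot)}(u)=\varphi(1)\le 1=\varphi(1/\norm{u}_{p(\cdot)})$ iff $1\le 1/\norm{u}_{p(\cdot)}$ iff $\norm{u}_{p(\cdot)}\le 1$. For (ii) and (iii) I would invoke the convexity estimate $\rho_{p(\cdot)}(\theta v)\le\theta\,\rho_{p(\cdot)}(v)$ for $\theta\in[0,1]$ (from convexity of $t\mapsto t^{p(x)}$ and $\rho_{p(\cdot)}(0)=0$): choosing $\theta=\norm{u}_{p(\cdot)}\le 1$ and $v=u/\norm{u}_{p(\cdot)}$ gives $\rho_{p(\cdot)}(u)\le\norm{u}_{p(\cdot)}$, while choosing $\theta=1/\norm{u}_{p(\cdot)}<1$ and $v=u$ (legitimate when $\norm{u}_{p(\cdot)}>1$) gives $1\le\rho_{p(\cdot)}(u)/\norm{u}_{p(\cdot)}$. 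For (iv) I would start from $1=\int_\Omega\abs{u(x)}^{p(x)}\norm{u}_{p(\cdot)}^{-p(x)}\,dx$ and use $\norm{u}_{p(\cdot)}^{-p^+}\le\norm{u}_{p(\cdot)}^{-p(x)}\le\norm{u}_{p(\cdot)}^{-p^-}$ when $\norm{u}_{p(\cdot)}\ge 1$ (and the reversed chain when $\norm{u}_{p(\cdot)}\le 1$); integrating these bounds yields $\norm{u}_{p(\cdot)}^{p^-}\le\rho_{p(\cdot)}(u)\le\norm{u}_{p(\cdot)}^{p^+}$ in the first case and $\norm{u}_{p(\cdot)}^{p^+}\le\rho_{p(\cdot)}(u)\le\norm{u}_{p(\cdot)}^{p^-}$ in the second, and in either case $\norm{u}_{p(\cdot)}^{p^-}-1\le\rho_{p(\cdot)}(u)\le\norm{u}_{p(\cdot)}^{p^+}+1$.

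The only genuinely delicate point is the core claim on $\varphi$, and within it the continuity, which is precisely where the boundedness $p^+<\infty$ enters --- for unbounded exponents $\varphi$ can jump and the identity $\rho_{p(\cdot)}(u/\norm{u}_{p(\cdot)})=1$ may fail. Everything else is routine bookkeeping with convexity and with the elementary monotonicity $a^{-s}\le a^{-t}$ for $0<t\le s$ and $a\ge 1$.
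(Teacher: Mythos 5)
Your argument is correct. The paper itself gives no proof but simply cites \cite[Lem.~3.2.4 \& Lem.~3.2.5]{lpx-book}, so there is nothing in-text to compare against; what you have written is a self-contained version of the standard Luxembourg-norm argument, and your opening remark that one could alternatively quote \cite[Sec.~3.2]{lpx-book} is exactly what the paper does. The reduction to the unweighted case via the isometry $u\mapsto u\sigma^{1/p(\cdot)}$ is precisely what \eqref{eq:mo-no} was recorded for, and the core claim --- that $\alpha\mapsto\rho_{p(\cdot)}(\alpha u)$ is finite, continuous and strictly increasing, forcing $\rho_{p(\cdot)}(u/\norm{u}_{p(\cdot)})=1$ --- is the right lever for all four items; your identification of $p^+<\infty$ as the hypothesis that makes this work is on point. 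One small simplification: the finiteness of $\rho_{p(\cdot)}(u)$ for $u\in L^{p(\cdot)}(\Omega)$ need not be re-derived from $p^+<\infty$, since the paper's definition of the weighted space already requires the modular of $u$ itself (not merely of some scaled $u/\lambda$) to be finite; that observation would only be needed if the space had been defined via the Luxembourg gauge in the Musielak--Orlicz fashion.
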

\begin{Bew}
	See \cite[Lem.~3.2.4 \& Lem.~3.2.5]{lpx-book}.
\end{Bew}

The following generalization of a classical result (cf.~\cite{ggz}) is
very useful~in~the identification of limits.
\begin{Sa}\label{pfastue}
	Let $\Omega\subseteq\R^d$,
	$d\in\mathbb{N}$, be bounded,
	$\sigma\in L^\infty(\Omega)$ a weight and $p\in \mathcal{P}^{\infty}(\Omega)$. Then, for a sequence
	$(u_n)_{n\in \mathbb{N}}\subseteq L^{p(\cdot)}(\Omega;\sigma)$
	from
	\begin{enumerate}
		\item [{\rm (i)}]$\lim\limits_{n\to\infty} u_n=v$ $\nu_{\sigma}$--a.e. in
		$\Omega$,
		\item [{\rm (ii)}] $u_n\rightharpoonup u$ in $L^{p(\cdot)}(\Omega;\sigma)$ $(n\to \infty)$,
	\end{enumerate}
	it follows that $u=v$ in $L^{p(\cdot)}(\Omega;\sigma)$.
\end{Sa}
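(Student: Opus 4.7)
The plan is to reduce the claim to the uniqueness of $\nu_{\sigma}$--a.e.\ limits by producing, via Mazur's lemma, a sequence of convex combinations of $(u_n)$ that converges strongly in $L^{p(\cdot)}(\Omega;\sigma)$ to the weak limit $u$ and at the same time $\nu_{\sigma}$--a.e.\ to $v$. Equating the two pointwise limits then yields $u=v$ in $L^{p(\cdot)}(\Omega;\sigma)$.

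First I would invoke Mazur's lemma: since $u_n\rightharpoonup u$ in the Banach space $L^{p(\cdot)}(\Omega;\sigma)$, there exist, for each $n\in\mathbb{N}$, an index $N_n\ge n$ and coefficients $\lambda_{n,k}\ge 0$ with $\sum_{k=n}^{N_n}\lambda_{n,k}=1$, such that the convex combinations
\begin{align*}
   \tilde u_n:=\sum_{k=n}^{N_n}\lambda_{n,k}\,u_k
\end{align*}
satisfy $\tilde u_n\to u$ strongly in $L^{p(\cdot)}(\Omega;\sigma)$. Because $p^+<\infty$, Lemma~\ref{lem:unit_ball_px}~(iv) implies that this strong convergence is equivalent to the modular convergence $\rho_{p(\cdot),\sigma}(\tilde u_n-u)\to 0$; in particular, $|\tilde u_n-u|^{p(\cdot)}\to 0$ in $L^1(\Omega;\nu_{\sigma})$. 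Since $L^1$--convergence entails convergence in $\nu_{\sigma}$--measure, a standard selection argument yields a subsequence, which I do not relabel, with $\tilde u_n\to u$ $\nu_{\sigma}$--a.e.\ in $\Omega$.

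On the other hand, $u_n\to v$ $\nu_{\sigma}$--a.e.\ propagates to every sequence of convex combinations drawn from tails: at a point $x$ where $u_n(x)\to v(x)$ and for given $\varepsilon>0$, choose $N\in\mathbb{N}$ with $|u_k(x)-v(x)|<\varepsilon$ for every $k\ge N$; then for every $n\ge N$, using that $\tilde u_n$ only combines indices $k\ge n\ge N$ and the weights sum to one,
\begin{align*}
   |\tilde u_n(x)-v(x)|\le \sum_{k=n}^{N_n}\lambda_{n,k}\,|u_k(x)-v(x)|<\varepsilon.
\end{align*}
Hence $\tilde u_n\to v$ $\nu_{\sigma}$--a.e.\ in $\Omega$. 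The uniqueness of the $\nu_{\sigma}$--a.e.\ limit now forces $u=v$ $\nu_{\sigma}$--a.e., and since $\sigma>0$ a.e., this is the asserted equality in $L^{p(\cdot)}(\Omega;\sigma)$.

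The only delicate point is the passage from norm convergence in $L^{p(\cdot)}(\Omega;\sigma)$ to pointwise $\nu_{\sigma}$--a.e.\ convergence along a subsequence; this is exactly where the hypothesis $p^+<\infty$ and the norm--modular unit ball property of Lemma~\ref{lem:unit_ball_px} are used. Neither the weighted nature nor the variable exponent causes additional trouble, because the whole argument takes place on the measure space $(\Omega,\nu_{\sigma})$ and Mazur's lemma is purely functional--analytic.
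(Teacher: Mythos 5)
Your argument is correct, but it follows a genuinely different route than the paper. The paper's proof is a two-line reduction: since $\Omega$ is bounded and $\sigma\in L^\infty(\Omega)$, one has the continuous embedding $L^{p(\cdot)}(\Omega;\sigma)\hookrightarrow L^1(\Omega;\sigma)$, so (ii) transfers to weak convergence in $L^1(\Omega;\sigma)$, and the claim then follows from the classical constant-exponent result (weak convergence plus $\nu_\sigma$--a.e.\ convergence identifies the limit), cited from Hewitt--Stromberg. You instead work directly in $L^{p(\cdot)}(\Omega;\sigma)$: Mazur's lemma produces tail convex combinations $\tilde u_n\to u$ in norm, norm convergence gives modular convergence, hence convergence of $\vert\tilde u_n-u\vert^{p(\cdot)}$ in $L^1(\nu_\sigma)$ and thus $\nu_\sigma$--a.e.\ convergence of a subsequence to $u$, while the tail structure of the convex combinations propagates $u_n\to v$ $\nu_\sigma$--a.e.\ to $\tilde u_n\to v$ $\nu_\sigma$--a.e.; uniqueness of pointwise limits and $\sigma>0$ a.e.\ conclude. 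Your proof is self-contained (it neither needs the embedding into $L^1(\Omega;\sigma)$ nor the external classical theorem, and it would reprove the constant-exponent case), at the price of being longer; the paper's proof is shorter but leans on a citation. One small correction: item (iv) of Lemma~\ref{lem:unit_ball_px} does not yield modular convergence from norm convergence, since the upper bound $\Vert\cdot\Vert_{p(\cdot),\sigma}^{p^+}+1$ only tends to $1$; the implication you need follows instead from item (ii) (once $\Vert\tilde u_n-u\Vert_{p(\cdot),\sigma}\le 1$, the modular is dominated by the norm), and for this direction the assumption $p^+<\infty$ is not even needed. With that citation adjusted, the argument stands.
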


\begin{Bew}
	For a proof in the case of constant exponents, we refer to \cite[Thm.~13.44]{Hew-strom-65}. However, because $L^{p(\cdot)}(\Omega;\sigma)\hookrightarrow L^1(\Omega;\sigma)$ for both ${p\in \mathcal{P}^{\infty}(G)}$ and $\sigma\in L^\infty(G)$, the non-constant case follows from the constant case.
\end{Bew}

Let us now introduce variable exponent Sobolev spaces in the weighted
and unweighted case. Let us start with the unweighted case. Due to $L^{p(\cdot)}(\Omega)\hookrightarrow L^1_{\loc}(\Omega)$, we
can define the variable exponent~Sobolev~space
$W^{1,p(\cdot)}(\Omega)$ as the subspace of $L^{p(\cdot)}(\Omega)$ consisting of all
functions $u\in L^{p(\cdot)}(\Omega)$ whose distributional gradient
satisfies
$\nabla u\hspace*{-0.1em}\in\hspace*{-0.1em}
L^{p(\cdot)}(\Omega)$. The norm
${\|\cdot\|_{1,p(\cdot)}\hspace*{-0.1em}:=\hspace*{-0.1em}\|\cdot\|_{p(\cdot)}\hspace*{-0.1em}+\hspace*{-0.1em}\|\nabla\cdot\|_{p(\cdot)}}$
turns $W^{1,p(\cdot)}(\Omega)$ into a separable Banach space
(cf. \cite[Thm. 8.1.6]{lpx-book}). Then, we define the space
$\smash{W^{1,p(\cdot)}_0(\Omega)}$~as~the~closure of
$C_0^\infty(\Omega)$ in
$\smash{W^{1,p(\cdot)}\hspace*{-0.1em}(\Omega)}$, while
$\smash{W^{1,p(\cdot)}_{0,\divo}(\Omega)}$ is~the~closure of
$C_{0,\divo}^\infty(\Omega)$ in $\smash{W^{1,p(\cdot)}(\Omega)}$. If
$p\in \mathcal{P}^{\infty}(\Omega)$, in addition,
satisfies~${p^->1}$,~then~the spaces $\smash{W^{1,p(\cdot)}(\Omega)}$,
$\smash{W^{1,p(\cdot)}_0(\Omega)}$ and
$\smash{W^{1,p(\cdot)}_{0,\divo}(\Omega)}$ are reflexive
(cf. \cite[Thm. 8.1.6]{lpx-book}).

Note that the velocity field $\bfv: \Omega \to \mathbb{R}^d$ solving
\eqref{NS}, in view of the properties of the extra stress tensor
(cf.~Assumption \ref{VssSpx}), necessarily satisfies
$\bD\bfv\in L^{p(\cdot)}(\Omega)$. Even though we have that $\bfv=0$
on $\partial\Omega$, we cannot resort to Korn's inequality in the
setting of variable exponent Sobolev spaces
(cf.~\cite[Thm.~14.3.21]{lpx-book}), since we do not assume that
$p=\hat p\circ \vert \bfE\vert^2\in \mathcal{P}^{\infty}(\Omega)$ is
globally $\log$--Hölder continuous.~However, if we switch by means of
Hölder's inequality from the variable exponent
$p\in \mathcal{P}^{\infty}(\Omega)$ to its lower bound $p^-$, for
which Korn's inequality is available, also using Poincar\'e's
inequality, we can expect that~a~solution~$\bfv$~of~\eqref{NS}~satisfies~$\smash{\bfv\in
  L^{p^-}(\Omega)}$. 
Thus, the natural energy space for the velocity possesses a different
integrability for the function and its symmetric gradient. This motivates
the introduction of the following variable exponent function spaces.
\begin{Def}
  Let  $\Omega\subseteq\R^d$, $d\in \mathbb{N}$, be open and
  $q,p\in\mathcal{P}^{\infty}(\Omega)$.  For $\bu\in
  C^\infty(\Omega)$, we define
  \begin{gather*}
    \Vert\bu\Vert _{\smash{\XDqp}}:=
    \|\bu\|_{q(\cdot)} +\|\bD\bu\|_{p(\cdot)}\,.
  \end{gather*}
  The space $\smash{\XDqp}$ is defined as the completion of
  \begin{align*}
    \mathcal{V}^{q(\cdot),p(\cdot)}_{\mathbf{D}}:=\big\{\bu
    \in C^\infty(\Omega)\fdg\|\bu\|_{q(\cdot)}+\|\bD\bu\|_{p(\cdot)}<\infty \big\}
  \end{align*}
  with respect to $\|\cdot\|_{\smash{\XDqp}}$. 
\end{Def}

\begin{Prop}
		Let  $\Omega\subseteq\R^d$, $d\in \mathbb{N}$, be open and $q,p\in\mathcal{P}^{\infty}(\Omega)$. Then,~the~space $\smash{\XDqp}$ is a separable Banach space, which is reflexive if $q^-,p^->1$.
\end{Prop}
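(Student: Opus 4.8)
The plan is to realize $\smash{\XDqp}$ as a closed subspace of a product of two classical variable exponent Lebesgue spaces and then to transfer separability and reflexivity from there. First I would check that $\Vert\cdot\Vert_{\smash{\XDqp}}$ is actually a norm on $\mathcal{V}^{q(\cdot),p(\cdot)}_{\mathbf{D}}$: it is plainly a seminorm, and $\Vert\bu\Vert_{\smash{\XDqp}}=0$ already forces $\Vert\bu\Vert_{q(\cdot)}=0$, hence $\bu=\bfzero$. Consequently, its metric completion $\smash{\XDqp}$ is a Banach space by the general theory of completions.

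Next, I would consider the linear map
\begin{align*}
  \iota\colon \mathcal{V}^{q(\cdot),p(\cdot)}_{\mathbf{D}}\to L^{q(\cdot)}(\Omega)\times L^{p(\cdot)}(\Omega)\,,\qquad \iota(\bu):=(\bu,\bD\bu)\,,
\end{align*}
where the product carries the norm $\Vert(f,g)\Vert:=\Vert f\Vert_{q(\cdot)}+\Vert g\Vert_{p(\cdot)}$. By the very definition of $\Vert\cdot\Vert_{\smash{\XDqp}}$, the map $\iota$ is an isometry. Since the target space is complete, $\iota$ extends uniquely to a linear isometry $\overline\iota\colon \smash{\XDqp}\to L^{q(\cdot)}(\Omega)\times L^{p(\cdot)}(\Omega)$ whose range is exactly the closure $Y:=\overline{\iota(\mathcal{V}^{q(\cdot),p(\cdot)}_{\mathbf{D}})}$ in the product. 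Hence $\smash{\XDqp}$ is isometrically isomorphic to the closed subspace $Y$.

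Finally, I would invoke the structural properties of the two factors. For $q,p\in\mathcal{P}^{\infty}(\Omega)$, the spaces $L^{q(\cdot)}(\Omega)$ and $L^{p(\cdot)}(\Omega)$ are separable Banach spaces (cf. \cite[Thm.~3.2.7 \& Lem.~3.4.4]{lpx-book}), so their product, and with it the subspace $Y\cong\smash{\XDqp}$, is separable. If in addition $q^-,p^->1$, then $L^{q(\cdot)}(\Omega)$ and $L^{p(\cdot)}(\Omega)$ are reflexive (cf. \cite[Thm.~3.4.7]{lpx-book}), hence so is their product and hence so is the closed subspace $Y\cong\smash{\XDqp}$.

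The argument is entirely soft, so there is no real obstacle; the only point requiring a moment's care is the identification of the abstract completion $\smash{\XDqp}$ with $\overline{\iota(\mathcal{V}^{q(\cdot),p(\cdot)}_{\mathbf{D}})}$, which follows from the universal property of the completion together with the standard fact that a linear isometry from a normed space into a complete space extends uniquely to an isometry on the completion with image the closure of the original range.
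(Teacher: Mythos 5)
Your proposal is correct and follows essentially the same route as the paper: the paper also embeds $\smash{\XDqp}$ isometrically into the product $L^{q(\cdot)}(\Omega)^d\times L^{p(\cdot)}(\Omega)^{d\times d}$ via $\bu\mapsto(\bu,\bD\bu)$ and transfers separability and reflexivity from the product to the closed range. Your version is slightly more explicit about defining the isometry first on $\mathcal{V}^{q(\cdot),p(\cdot)}_{\mathbf{D}}$ and then extending to the abstract completion via the universal property, a point the paper states directly for the map $\Pi$ on $\smash{\XDqp}$ without spelling out the extension; the substance is identical.
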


\begin{proof}
Clearly, $\|\cdot\|_{\smash{\XDqp}}$ is a norm  and, thus, $\XDqp$, by~\mbox{definition}, is a Banach space.
For the separability and reflexivity, we first observe that
$\Pi:\smash{\XDqp}\to L^{q(\cdot)}(\Omega)^d\times
L^{p(\cdot)}(\Omega)^{d\times d}$ is an isometry and, thus, an
isometric isomorphism into its range $R(\Pi)$. Hence, $R(\Pi)$
inherits the separability and reflexivity from
$L^{q(\cdot)}(\Omega)^d\times L^{p(\cdot)}(\Omega)^{d\times d}$, and
by virtue of the isometric isomorphism $\smash{\XDqp}$ as well. 
\end{proof}

\begin{Def}
	Let  $\Omega\subseteq\R^d$, $d\in \mathbb{N}$, be open and $q,p\in\mathcal{P}^{\infty}(\Omega)$. Then,~we~define the spaces
	\begin{align*}
		\smash{\XDqpo:=\overline{C_0^\infty(\Omega)}^{\smash{\|\cdot\|_{\smash{\XDqp}}}},\quad 	\VDqpo:=\overline{C_{0,\textup{div}}^\infty(\Omega)}^{\smash{\|\cdot\|_{\smash{\XDqp}}}}\,.}
	\end{align*}
\end{Def}

For the treatment of the micro-rotation $\bomega$, we also need  weighted
variable exponent Sobolev spaces. In analogy with \cite[Ass.~2.2]{kr-micro}, we make the following assumption on the weight $\sigma$.

\begin{Vss}\label{weightp(x)}
  Let $\Omega\subseteq\R^d$, $d\in \mathbb{N}$, be an open set and 
  $q,p\in\mathcal{P}^{\infty}(\Omega)$. The weight
  $\sigma$ is admissible, i.e., if a sequence ${(\bvarphi_n)_{n\in
    \mathbb{N}}\subseteq C^\infty(\Omega)}$  and ${\bT \in \Lpwx}$
  satisfy  ${\int_\Omega\vert\bvarphi_n(x)\vert^{q(x)}\sigma(x)\,dx\!\to \! 0}$
  $(n\!\to\!\infty)$ and
  ${\int_\Omega\vert\nabla\bvarphi_n(x)\!-\!\bT(x)\vert^{p(x)}\sigma(x)\,dx\!\to \!
  0}$ $(n\to\infty)$, then it follows that $\bT=\mathbf{0} $ in
  $\Lpwx$.
\end{Vss}

\begin{Bem}
	If $\sigma \in C^0(\Omega)$, then the same argumentation as in \cite[Rem.~2.3~(ii)]{kr-micro}  shows that  Assumption~\ref{weightp(x)} is satisfied for every $q,p\in \mathcal{P}^{\infty}(\Omega)$.
\end{Bem}


\begin{Def}
  Let $\Omega \subseteq \R^d$, $d\in \mathbb{N}$, be open and let
  $\sigma$ satisfy Assumption \ref{weightp(x)} for
  $q,p \in \mathcal{P}^{\infty}(\Omega)$.  For $\bu\in
  C^\infty(\Omega)$, we define
  \begin{gather*}
    \Vert\bu\Vert_{q(\cdot),p(\cdot),\sigma}:=
  \|\bu\|_{q(\cdot),\sigma} +\|\nabla\bu\|_{p(\cdot),\sigma}\,.
  \end{gather*}
  The weighted variable
  exponent Sobolev space $\smash{\XGqpmu}$ is defined as the
  completion of
  \begin{align*}
    \smash{\mathcal{V}^{q(\cdot),p(\cdot)}_{\nabla,\sigma}:=\big\{\bu\in
    C^\infty(\Omega)\fdg 
    \Vert\bu\Vert_{q(\cdot),p(\cdot),\sigma}<\infty\big\}}
  \end{align*}
  with respect to
  $\Vert\cdot\Vert_{q(\cdot),p(\cdot),\sigma}$. 
\end{Def}

In other words, $\bfw \in \XGqpmu$ if and only if
$\bw\in L^{q(\cdot)}(\Omega;\sigma)$ and there is a tensor field
$\bT\in \Lpwx$ such that for some sequence
$(\boldsymbol\varphi_n)_{n\in \mathbb{N}}\subseteq C^\infty(\Omega)$
holds
$\Io\vert\boldsymbol\varphi_n-\bw\vert^{q(\cdot )}\sigma\,dx\to 0$
$(n\to\infty)$ and
${\Io\vert\nabla\boldsymbol\varphi_n-\bT\vert^{p(\cdot)}\sigma\,dx\to
  0}$~${(n\to\infty)}$.  Assumption \ref{weightp(x)} implies that
$\bT $ is a uniquely defined function in $\Lpwx$~and~we, thus, set
$\smash{\hat{\nabla}}\bw:=\bT$ in $\Lpwx$. Note that
$\smash{W^{1,p(\cdot)}(\Omega)\hspace*{-0.1em}=\hspace*{-0.1em}X^{p(\cdot),p(\cdot)}_{\nabla}(\Omega;\sigma)}$~if~${\sigma=
    1}$ a.e. in $\Omega$ with $\smash{\nabla \bw =\hat\nabla \bw}$ for
  all $\smash{\bw\in W^{1,p(\cdot)}(\Omega)}$. However, in general,
  $\smash{\hat{\nabla}\bw}$ and the usual distributional
  gradient $\nabla \bw$ do not coincide.

\begin{Sa}
	Let  $\Omega\hspace*{-0.1em}\subseteq\hspace*{-0.1em}\R^d$,
        $d\hspace*{-0.1em}\in\hspace*{-0.1em} \mathbb{N}$, be open and
        let $\sigma$ satisfy Assumption~\ref{weightp(x)}. Then, 
	 $\smash{\XGqpmu} $ is a
	separable Banach space, which is reflexive if ${q^-,p^->1}$.
\end{Sa}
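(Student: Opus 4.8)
The argument parallels the one given above for $\XDqp$: I would represent $\XGqpmu$ as an isometric copy of a closed subspace of a product of two weighted variable exponent Lebesgue spaces, and inherit the desired properties from there. Since $\XGqpmu$ is, by definition, a completion, it is already a Banach space, so only separability and reflexivity require an argument.

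Consider the map $\Pi\colon\XGqpmu\to L^{q(\cdot)}(\Omega;\sigma)\times\Lpwx$, $\bw\mapsto(\bw,\hat\nabla\bw)$, where the product carries the sum norm. The only point requiring care is that $\hat\nabla\bw$ is well defined on the completion. An element $\bw\in\XGqpmu$ is represented by a Cauchy sequence $(\boldsymbol\varphi_n)_{n\in\mathbb{N}}\subseteq\mathcal{V}^{q(\cdot),p(\cdot)}_{\nabla,\sigma}$ with respect to $\|\cdot\|_{q(\cdot),p(\cdot),\sigma}$; since $L^{q(\cdot)}(\Omega;\sigma)$ and $\Lpwx$ are complete, $\boldsymbol\varphi_n\to\bw$ in $L^{q(\cdot)}(\Omega;\sigma)$ and $\nabla\boldsymbol\varphi_n\to\bT$ in $\Lpwx$ for some $\bT$. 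If $(\boldsymbol\psi_n)_{n\in\mathbb{N}}$ is another representing sequence with $\nabla\boldsymbol\psi_n\to\bS$ in $\Lpwx$, then $\boldsymbol\varphi_n-\boldsymbol\psi_n\to\mathbf{0}$ in $L^{q(\cdot)}(\Omega;\sigma)$ and $\nabla(\boldsymbol\varphi_n-\boldsymbol\psi_n)\to\bT-\bS$ in $\Lpwx$, so Assumption~\ref{weightp(x)}, applied to this sequence and the tensor field $\bT-\bS$, forces $\bT=\bS$ in $\Lpwx$. Hence $\hat\nabla\bw:=\bT$ is unambiguously defined, $\Pi$ is linear, and $\|\Pi\bw\|=\|\bw\|_{q(\cdot),\sigma}+\|\hat\nabla\bw\|_{p(\cdot),\sigma}=\|\bw\|_{q(\cdot),p(\cdot),\sigma}$, so $\Pi$ is an isometry onto its range $R(\Pi)$.

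Next I would invoke the facts recalled in Section~\ref{veroeffentlichung1}: for any weight, $L^{q(\cdot)}(\Omega;\sigma)$ and $\Lpwx$ are separable Banach spaces, so their product with the sum norm is separable; and if $q^->1$, $p^->1$ they are in addition reflexive, hence so is the product. A subset of a separable metric space is separable, so $R(\Pi)$, and therefore $\XGqpmu$ via the isometric isomorphism $\Pi\colon\XGqpmu\to R(\Pi)$, is separable. For reflexivity, note that $\XGqpmu$ is a Banach space and $\Pi$ an isometry, so $R(\Pi)$ is complete and hence closed in $L^{q(\cdot)}(\Omega;\sigma)\times\Lpwx$; a closed subspace of a reflexive Banach space is reflexive, and reflexivity transports back along $\Pi^{-1}$.

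Thus the only genuinely substantive step is the well-definedness of $\hat\nabla$ on the completion, which is precisely what Assumption~\ref{weightp(x)} is tailored to ensure (cf.\ the discussion following the definition of $\XGqpmu$); everything else is the same soft functional analysis as in the unweighted case, the cited structural results for $\Lpwx$ being valid for general weights $\sigma$.
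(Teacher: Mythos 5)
Your argument is correct and follows essentially the same route as the paper: represent $\XGqpmu$ via the isometry $\Pi\bw=(\bw,\hat\nabla\bw)$ into $L^{q(\cdot)}(\Omega;\sigma)\times\Lpwx$ and pull separability and reflexivity back from the range. The only difference is a matter of presentation: the well-definedness of $\hat\nabla$ via Assumption~\ref{weightp(x)}, which you fold into the proof, is established by the paper in the paragraph preceding the theorem, and the paper verifies the isometry identity $\|\bw\|_{q(\cdot),p(\cdot),\sigma}=\|\bw\|_{q(\cdot),\sigma}+\|\hat\nabla\bw\|_{p(\cdot),\sigma}$ by passing to the limit along approximating sequences rather than stating it as immediate.
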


\begin{proof}
	The space $\XGqpmu$ is a Banach space by definition. So,
        it~is~left~to prove that it is  separable, and if, in
        addition, $q^-,p^->1$ it is reflexive. To this end, we first note that
	\begin{align}
		\|\bw\|_{q(\cdot),p(\cdot),\sigma}=\|\bw\|_{q(\cdot),\sigma}+\|\hat\nabla \bw\|_{p(\cdot),\sigma}\label{isometry}
	\end{align}
	for all $\bw\hspace*{-0.1em}\in\hspace*{-0.1em} \smash{\XGqpmu}$. In fact, for any $\bw\hspace*{-0.1em}\in\hspace*{-0.1em} \smash{\XGqpmu}$, by~definition,~there is a sequence $(\boldsymbol\varphi_n)_{n\in \mathbb{N}}\subseteq{\smash{\mathcal{V}^{q(\cdot),p(\cdot)}_{\sigma}}}$ such that $\boldsymbol\varphi_n\to \bw$~in~$L^{q(\cdot)}(\Omega;\sigma)$~${(n\to\infty)}$ and $\nabla\boldsymbol\varphi_n\hspace*{-0.1em}\to \hspace*{-0.1em}\hat\nabla \bw$ in $\Lpwx$ $(n\hspace*{-0.1em}\to\hspace*{-0.1em} \infty)$ and ${\|\boldsymbol\varphi_n\|_{q(\cdot),p(\cdot),\sigma}\hspace*{-0.1em}=\hspace*{-0.1em}
	\|\boldsymbol\varphi_n\|_{q(\cdot),\sigma}
	+\|\nabla\boldsymbol\varphi_n\|_{p(\cdot),\sigma}}$ for all $n\hspace*{-0.1em}\in\hspace*{-0.1em} \mathbb{N}$. The limit $n\hspace*{-0.1em}\to\hspace*{-0.1em} \infty$ yields~\eqref{isometry}~for~every~${\bw\hspace*{-0.1em}\in\hspace*{-0.1em} \XGqpmu}$.
	Then,  \eqref{isometry} implies that ${\Pi:\XGqpmu\to L^{q(\cdot)}(\Omega;\sigma)^d\times \Lpwx^{d\times d}}$, defined via $\Pi \bw\hspace*{-0.15em}:=\hspace*{-0.15em}(\bw,\!\hat\nabla \bw)^\top \hspace*{-0.15em}$ in $L^{q(\cdot)}(\Omega;\sigma)^d\times \Lpwx^{d\times d}$ for all $\bw\hspace*{-0.15em}\in\hspace*{-0.15em} \XGqpmu$, is an isometry,
	and, thus, an isometric isomorphism into its range $R(\Pi)$. 
	Hence, $R(\Pi)$ inherits the separability and reflexivity, if, in addition, $q^-,p^->1$, of $\smash{L^{q(\cdot)}(\Omega;\sigma)^d\times \Lpwx^{d\times d}}$ and~$\smash{\XGqpmu}$~as well.
\end{proof}
If, in addition,~${\sigma\in L^\infty(\Omega)}$, then we have that
$\smash{W^{1,p(\cdot)}(\Omega)\hookrightarrow
	{X}^{p(\cdot),p(\cdot)}_{{\nabla}}(\Omega;\sigma)}$ and ${\nabla \bw =\hat\nabla \bw}$ for
all $\smash{\bw\in W^{1,p(\cdot)}(\Omega)}$, which follows from the estimate
\begin{gather*}
	\smash{\|\bu\|_{p(\cdot),\sigma}=\|\bu\sigma^{1/p(\cdot)}\|_{{p(\cdot)}}\leq 2\,
		\|\sigma\|_{\infty}^{1/p^-}\|\bu\|_{{p(\cdot)}}} 
\end{gather*}
valid for every $\bu\in L^{p(\cdot)}(\Omega)$, in the same way as in
\cite[Sec.~1.9, Sec.~1.10]{heinonen}.

\begin{Def}
	Let  $\Omega\hspace*{-0.1em}\subseteq\hspace*{-0.1em}\R^d$, $d\hspace*{-0.1em}\in\hspace*{-0.1em} \mathbb{N}$, be open and let Assumption~\ref{weightp(x)}~be~\mbox{satisfied} for $q,p\in \mathcal{P}^{\infty}(\Omega)$. Then, we define the space
	\begin{align*}
		\smash{\XGqpmuo:=\overline{C_0^\infty(\Omega)}^{\|\cdot\|_{q(\cdot),p(\cdot),\sigma}}\,.}
	\end{align*}
\end{Def}
In the particular case $\smash{\sigma=1}$ a.e. in $\Omega$, we employ the abbreviations
\begin{gather*}
	\XGqp:=X^{q(\cdot),p(\cdot)}_{{\nabla}}(\Omega;1)\,,\quad\XGqpo:={\mathaccent23 X}^{q(\cdot),p(\cdot)}_{{\nabla}}(\Omega;1)\,.
\end{gather*}

The following local embedding result will play a key role for the
applicability of the Lipschitz truncation technique
in the proof  of the existence result in Theorem~\ref{thm:main4p(x)}.

\begin{Sa}\label{poincarep(x)}
	Let $\Omega\subseteq \mathbb{R}^d$, $d\ge 2$, be a bounded
        domain and let $p\in C^0(\Omega)$ satisfy
        $p^-\ge\frac{2d}{d+2}$. Then, for each
        $\Omega'\subset\subset\Omega$, there  exists a constant ${c(p,\Omega')>0}$  such that for every $\smash{\bfu\in \XDp}$, it holds $\smash{\bfu\in L^{p(\cdot)}(\Omega')}$ with
	\begin{align}
		\|\bfu\|_{L^{p(\cdot)}(\Omega')}\leq c(p,\Omega')\|\bfu\|_{\XDp}\,.\label{eq:poincarep(x)}
	\end{align}
	i.e., we have that $\XDp\hookrightarrow L^{p(\cdot)}_{\loc}(\Omega)$.
\end{Sa}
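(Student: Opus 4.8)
The plan is to reduce the variable‑exponent statement to the constant‑exponent case by exploiting the lower bound $p^- \ge \frac{2d}{d+2}$, for which the classical Sobolev–Korn–Poincaré chain is available, and then to upgrade the integrability from $p^-$ to $p(\cdot)$ locally by a covering argument using continuity of $p$. First I would fix $\Omega' \subset\subset \Omega$ and take $\bfu \in \XDp$; by density it suffices to prove the estimate \eqref{eq:poincarep(x)} for $\bfu \in \mathcal{V}^{p^-,p(\cdot)}_{\mathbf{D}}$, i.e. smooth $\bfu$ with $\|\bfu\|_{p^-} + \|\bD\bfu\|_{p(\cdot)} < \infty$, and then pass to the limit. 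Choose an intermediate set $\Omega'\subset\subset \Omega''\subset\subset \Omega$ with $\Omega''$ a bounded Lipschitz (e.g. smooth) domain, and a cutoff $\eta \in C_0^\infty(\Omega'')$ with $\eta \equiv 1$ on $\Omega'$. Then $\eta\bfu \in W^{1,p^-}_0(\Omega'')$ and $\bD(\eta\bfu) = \eta\,\bD\bfu + (\nabla\eta \odot \bfu)^{\sym}$, so by Korn's inequality in $L^{p^-}$ (valid since $p^- > 1$ — note $\frac{2d}{d+2}>1$ for $d\ge 2$… wait, $\frac{2d}{d+2}=1$ only at $d=2$, so one may need $p^->1$ separately or argue $p^- \ge \frac{2d}{d+2} \ge 1$ with equality handled by the standing convention; in the genuinely relevant range $p^->1$) followed by Poincaré, $\|\nabla(\eta\bfu)\|_{L^{p^-}(\Omega'')} \le c\,\|\bD(\eta\bfu)\|_{L^{p^-}(\Omega'')} \le c\,(\|\bD\bfu\|_{L^{p^-}(\Omega'')} + \|\bfu\|_{L^{p^-}(\Omega'')})$, and the last norm is controlled by $\|\bfu\|_{\XDp}$ after noting $\|\bD\bfu\|_{L^{p^-}(\Omega'')} \le c\,\|\bD\bfu\|_{L^{p(\cdot)}(\Omega'')} \le c\,\|\bD\bfu\|_{L^{p(\cdot)}(\Omega)}$ by Hölder in variable exponent spaces (the embedding $L^{p(\cdot)}(\Omega'') \hookrightarrow L^{p^-}(\Omega'')$ on a bounded set). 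Then Sobolev embedding in $W^{1,p^-}(\Omega'')$ gives $\eta\bfu \in L^{(p^-)^*}(\Omega'')$, where $(p^-)^* = \frac{dp^-}{d-p^-}$ (or any exponent if $p^-\ge d$), with $\|\eta\bfu\|_{L^{(p^-)^*}(\Omega'')} \le c\,\|\bfu\|_{\XDp}$.

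The second, and main, step is to convert this fixed higher Lebesgue integrability into the \emph{variable} integrability $p(\cdot)$ on $\Omega'$. The key point is that $p^- \ge \frac{2d}{d+2}$ is exactly the condition making $(p^-)^* \ge p^+_{\mathrm{loc}}$ on small balls: since $p$ is continuous on $\overline{\Omega'}$, it is uniformly continuous there, so $\Omega'$ can be covered by finitely many balls $B_1,\dots,B_N$ (with $2B_i \subset \Omega''$) on each of which $\mathrm{osc}_{B_i} p$ is as small as we like; on such a ball $\sup_{B_i} p(\cdot)$ is close to $\inf_{B_i} p(\cdot)$, and one checks that $\frac{dp^-}{d-p^-} > p^+|_{B_i}$ provided the oscillation and the closeness of $p^-$ to $p(\cdot)$ on $B_i$ are controlled — the borderline case $p^- = \frac{2d}{d+2}$ gives $(p^-)^* = p^-$ locally only if $p$ were constant, so one needs strict inequality $(p^-)^*>p^+|_{B_i}$, which holds after shrinking $B_i$ because $(p^-)^* = \frac{dp^-}{d-p^-}$ and $p^+|_{B_i}\to p^-$ as the radius shrinks while $(p^-)^*$ stays fixed and $(p^-)^* \ge p^- $ with equality only at the endpoint exponent; more carefully, $(p^-)^* - p^- = \frac{p^-(p^--(d-2)\cdot\frac{d}{?})}{\dots}$ — the clean statement is $(p^-)^* \ge p^-$ always, and $(p^-)^* > p^-$ strictly whenever $p^- > 1$ (resp. $\ge$, handled by convention at $d=2$). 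On each $B_i$ we then have $\bfu|_{B_i} \in L^{(p^-)^*}(B_i) \hookrightarrow L^{p(\cdot)}(B_i)$ (embedding of constant into variable exponent on a bounded set, which requires $(p^-)^* \ge \mathrm{ess\,sup}_{B_i} p$ — this is where the covering and oscillation control are used), with norm bound $\|\bfu\|_{L^{p(\cdot)}(B_i)} \le c(B_i)\,\|\bfu\|_{L^{(p^-)^*}(B_i)} \le c\,\|\bfu\|_{\XDp}$.

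Finally I would patch the local pieces: $\|\bfu\|_{L^{p(\cdot)}(\Omega')} \le \sum_{i=1}^N \|\bfu\|_{L^{p(\cdot)}(B_i)} \le N\,c\,\|\bfu\|_{\XDp}$ — using subadditivity of the Luxembourg norm over a finite cover (or first working with modulars and invoking Lemma \ref{lem:unit_ball_px}) — which yields \eqref{eq:poincarep(x)} with $c(p,\Omega')$ depending on the cover, hence on $p$ and $\Omega'$. Passing from smooth $\bfu$ to general $\bfu \in \XDp$ is then routine: for a defining sequence $(\bfu_k)\subset \mathcal{V}^{p^-,p(\cdot)}_{\mathbf{D}}$ with $\bfu_k \to \bfu$ in $\|\cdot\|_{\XDp}$, the estimate shows $(\bfu_k)$ is Cauchy in $L^{p(\cdot)}(\Omega')$, and its limit is identified with $\bfu$ (e.g. via a.e. convergence of a subsequence and Theorem \ref{pfastue}), so \eqref{eq:poincarep(x)} persists. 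I expect the genuine obstacle to be the second step — verifying carefully that the strict inequality $\frac{dp^-}{d-p^-} > \mathrm{ess\,sup}_{B_i} p$ can be achieved uniformly over a finite cover of $\overline{\Omega'}$ from continuity of $p$ alone (no log‑Hölder needed), and handling the degenerate endpoints $p^- = \frac{2d}{d+2}$ and $p^- \ge d$ — everything else is a bookkeeping combination of Korn, Poincaré, Sobolev, and Hölder, all already invoked in the excerpt.
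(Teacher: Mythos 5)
There is a genuine gap in the second step, and you correctly suspected where. You pass from $\bfu\in W^{1,p^-}(\Omega'')$ to $\bfu\in L^{(p^-)^*}(\Omega'')$ via the constant-exponent Sobolev embedding and then want $L^{(p^-)^*}(B_i)\hookrightarrow L^{p(\cdot)}(B_i)$ on small balls by arranging $(p^-)^*>\sup_{B_i}p$. But $(p^-)^*=\frac{dp^-}{d-p^-}$ is a fixed number determined by the \emph{global} infimum $p^-$, while $\sup_{B_i}p$ shrinks (by continuity) to $p(x_i)$ for the center $x_i$, not to $p^-$. Any $x_i\in\Omega'$ with $p(x_i)>(p^-)^*$ gives a ball on which your embedding fails no matter how much it is shrunk; e.g.\ $d=3$, $p^-=3/2$, so $(p^-)^*=3$, while $p$ may exceed $3$ somewhere in $\Omega'$. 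The hypothesis $p^-\ge\frac{2d}{d+2}$ only guarantees $(p^-)^*\ge 2$, not $(p^-)^*\ge p^+$, so the claim ``$p^+|_{B_i}\to p^-$ as the radius shrinks'' is simply false and the covering step does not close.

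The paper's proof fills exactly this hole. It covers $\overline{\Omega'}$ by balls on which $p$ has small oscillation, but works with the \emph{local} bounds $p_i^\pm$ on $B_i$, choosing the radii so small that $p_i^+<p_i^-(1+\tfrac{2}{d})$; this is pure continuity of $p$. On each ball it then applies the Gagliardo--Nirenberg inequality \eqref{GN} between $W^{1,p_i^-}(B_i)$ and $L^2(B_i)$, using Korn's second inequality (Lemma~\ref{korn2}) to replace $\nabla\bfu$ by $\bD\bfu$, and the oscillation bound translates precisely into $p_i^+\theta_i<p_i^-$ for the interpolation index $\theta_i$. The lower-order term $\|\bfu\|_{L^{p_i^-}(B_i)}$ that now appears is \emph{not} controlled directly (since $p_i^-$ may exceed $p^-$); instead the modular $\rho_{p_i^-}(\bfu\chi_{B_i})$ is dominated by $\rho_{p_i^+}(\bfu\chi_{B_i})$ up to constants and absorbed into the left-hand side via an $\varepsilon$-Young inequality, which is possible exactly because $p_i^+\theta_i<p_i^-$. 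That absorption is the heart of the argument and has no counterpart in your proposal. The role of $p^-\ge\frac{2d}{d+2}$ is only to get $\XDp\hookrightarrow W^{1,p^-}(\Omega'')\hookrightarrow L^2(\Omega'')$ and thus control of the $L^2$-norm feeding the interpolation. Your covering idea, the use of $L^{p(\cdot)}(\Omega'')\hookrightarrow L^{p^-}(\Omega'')$ for $\bD\bfu$, and the density argument in the final step all match the paper; what is missing is the local Gagliardo--Nirenberg interpolation with $L^2$ and the absorption, without which the proof does not go through.
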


\begin{proof}
	The proof is postponed to the \ref{sec:app}.
\end{proof}

\subsection{$\log$--H\"older continuity and related results}\label{sec:log}

We say that a bounded exponent $p\in \mathcal P^\infty (G)$ is locally
$\log$--Hölder continuous, if there is a constant $c_1>0$ such that
for all $x,y\in G$
\begin{align*}
	\vert p(x)-p(y)\vert \leq \frac{c_1}{\log(e+1/\vert x-y\vert)}\,.
\end{align*}
We say that $p \in \mathcal P^\infty (G)$ satisfies the $\log$--Hölder decay condition, if there exist 
constants $c_2>0$ and $p_\infty\in \setR$ such that for all $x\in G$
\begin{align*}
	\vert p(x)-p_\infty\vert \leq\frac{c_2}{\log(e+1/\vert x\vert)}\,.
\end{align*} 
We say that $p$ is globally $\log$--Hölder continuous on $G$, if it is locally 
$\log$--Hölder continuous and satisfies the $\log$--Hölder decay condition. 
Then, the maximum ${c_{\log}(p):=\max\{c_1,c_2\}}$ is just called the $\log$--Hölder constant of $p$.
Moreover, we denote by $\mathcal{P}^{\log}(G)$ the set of 
globally $\log$--Hölder continuous
functions on $G$. 

$\log$--Hölder continuity  is a special modulus of continuity for variable exponents that is sufficient for the validity of the following results.

\begin{Sa}\label{bogp(x)}
	Let $G\subseteq \mathbb{R}^d\!$, $d\ge 2$, be a bounded Lipschitz
	domain. Then, there exists a linear operator
	$\mathcal{B}_G:C^{\infty}_{0,0}(G)\to C^\infty_0(G)$ which for all
	exponents $\smash {p\in \mathcal{P}^{\log}(G)}$ satisfying $p^->1$
	extends uniquely to a linear, bounded operator
	${\mathcal{B}_G:L^{p(\cdot)}_0(G)\to W^{1,p(\cdot)}_0(G)}$ such that
	$\|\mathcal{B}_Gu\|_{1,p(\cdot)}\leq c\,\|u\|_{p(\cdot)}$ and
	$\textup{div}\,\mathcal{B}_Gu = u$ for every
	$\smash{u\in L^{p(\cdot)}_0(G)}$.
\end{Sa}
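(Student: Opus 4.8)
The plan is to realise $\mathcal{B}_G$ through the classical explicit Bogovskii formula and then to invoke the boundedness of singular integral operators of Calder\'on--Zygmund type on variable exponent Lebesgue spaces, the latter being the only genuinely deep ingredient. I would first treat the model case in which $G$ is star-shaped with respect to a ball $B\subset\subset G$: fixing a cut-off $\omega\in C_0^\infty(B)$ with $\int_B\omega\,dx=1$, one sets, for $u\in C^\infty_{0,0}(G)$,
\begin{equation*}
	\mathcal{B}_G u(x):=\int_G u(y)\,\frac{x-y}{|x-y|^d}\int_{|x-y|}^{\infty}\omega\Big(x+r\,\tfrac{x-y}{|x-y|}\Big)\,r^{d-1}\,dr\;dy\,,
\end{equation*}
which is linear, independent of $p$, maps $C^\infty_{0,0}(G)$ into $C_0^\infty(G)$, and satisfies $\textup{div}\,\mathcal{B}_G u=u$. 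The crucial structural fact is that every component of $\nabla\mathcal{B}_G u$ can be written as $Tu+Su$, where $T$ is a Calder\'on--Zygmund operator with kernel $K(x,x-y)$, $K(x,\cdot)$ positively homogeneous of degree $-d$, smooth off the origin, with vanishing integral over the unit sphere and uniform bounds in $x$, while $S$ has a kernel bounded by $c\,|x-y|^{1-d}$ with support in a compact subset of $G$, so that $S$ maps $L^1(G)$ into $L^\infty(G)$ and is a fortiori bounded on $L^{p(\cdot)}(G)$ for every $p\in\mathcal{P}^\infty(G)$.

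Next, for $p\in\mathcal{P}^{\log}(G)$ with $1<p^-\le p^+<\infty$, Calder\'on--Zygmund operators act boundedly on $L^{p(\cdot)}(G)$ (see \cite{lpx-book} and the references therein), which together with the previous step gives $\|\nabla\mathcal{B}_G u\|_{p(\cdot)}\le c\,\|u\|_{p(\cdot)}$ for all $u\in C^\infty_{0,0}(G)$. Since $\mathcal{B}_G u$ vanishes on $\partial G$, Poincar\'e's inequality in $L^{p(\cdot)}$ on bounded domains, valid for $\log$--H\"older exponents (again \cite{lpx-book}), upgrades this to $\|\mathcal{B}_G u\|_{1,p(\cdot)}\le c\,\|u\|_{p(\cdot)}$. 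As $C^\infty_{0,0}(G)$ is dense in $L^{p(\cdot)}_0(G)$ under the same hypotheses on $p$, the operator $\mathcal{B}_G$ extends uniquely to a bounded linear operator $L^{p(\cdot)}_0(G)\to W^{1,p(\cdot)}_0(G)$, and the identity $\textup{div}\,\mathcal{B}_G u=u$ passes to the limit since $\textup{div}:W^{1,p(\cdot)}_0(G)\to L^{p(\cdot)}(G)$ is continuous.

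For a general bounded Lipschitz domain $G$ one covers $\overline G$ by finitely many subdomains $G_1,\dots,G_m$, each star-shaped with respect to a ball $B_i\subset\subset G_i$, and reduces to the model case by the standard telescoping decomposition $u=\sum_{i=1}^m u_i$ with $u_i\in L^{p(\cdot)}_0(G_i)$ (extended by zero to $G$), $\operatorname{supp}u_i\subseteq\overline{G_i}$, and $\sum_{i=1}^m\|u_i\|_{p(\cdot)}\le c\,\|u\|_{p(\cdot)}$; this is obtained by multiplying $u$ by a partition of unity subordinate to $\{G_i\}$ and restoring zero means step by step by subtracting correction terms supported in the balls $B_i$, each correction being bounded on $L^{p(\cdot)}(G)$ because multiplication by bounded cut-offs is bounded there and any such correction has the form $\big(\int_G\omega_i\,v\,dx\big)\chi_{B_i}$ with $\big\|\big(\int_G\omega_i\,v\,dx\big)\chi_{B_i}\big\|_{p(\cdot)}\le c\,\|v\|_{L^1(G)}\le c\,\|v\|_{p(\cdot)}$ by H\"older's inequality on the bounded domain. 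Setting $\mathcal{B}_G u:=\sum_{i=1}^m\mathcal{B}_{G_i}u_i$ then yields the asserted bound, the divergence identity, and the mapping into $W^{1,p(\cdot)}_0(G)$. The main obstacle in this scheme is the $L^{p(\cdot)}$-boundedness of Calder\'on--Zygmund operators, which is precisely where global $\log$--H\"older continuity of $p$ is indispensable; the remaining ingredients --- the explicit formula with its elementary properties, Poincar\'e's inequality, density of $C^\infty_{0,0}(G)$, and the domain decomposition --- are structurally routine, with the bookkeeping in the decomposition step being the only laborious elementary part.
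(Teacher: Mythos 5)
Your overall plan is the canonical one and coincides with what the references the paper points to actually do (the paper's ``proof'' is merely a citation to \cite{dr-calderon} and \cite[Thm.~14.3.15]{lpx-book}): the explicit Bogovskii integral on a domain star-shaped with respect to a ball, decomposition of the gradient kernel into a Calder\'on--Zygmund part plus a lower-order remainder, variable exponent Calder\'on--Zygmund theory under global $\log$--H\"older continuity, Poincar\'e's inequality, density of $C^\infty_{0,0}(G)$, and the telescoping decomposition that glues star-shaped pieces into a bounded Lipschitz domain.

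There is, however, one step that is simply false. An integral operator $S$ whose kernel is only dominated by $c\,|x-y|^{1-d}$ does \emph{not} map $L^1(G)$ into $L^\infty(G)$: the kernel is weakly singular, not bounded, and an $L^1$ density concentrating near a point $x_0$ produces an image behaving like $|x-x_0|^{1-d}$, which is unbounded. On a bounded domain the Schur test gives $S\colon L^p(G)\to L^p(G)$ for each fixed $1\le p\le\infty$, but this does not transfer to $L^{p(\cdot)}(G)$ by any elementary embedding. The correct, and standard, treatment of $S$ on $L^{p(\cdot)}(G)$ is the pointwise bound $|Su(x)|\le c\,\operatorname{diam}(G)\,Mu(x)$, obtained by decomposing $\int_G |x-y|^{1-d}|u(y)|\,dy$ over dyadic annuli, followed by the boundedness of the Hardy--Littlewood maximal operator $M$ on $L^{p(\cdot)}(G)$ --- itself a consequence of global $\log$--H\"older continuity. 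Thus both pieces of the kernel decomposition, not only the Calder\'on--Zygmund part, lean on the $\log$--H\"older hypothesis. As a minor aside, the argument of the cut-off in your Bogovskii formula should be $y+r\,\tfrac{x-y}{|x-y|}$, not $x+r\,\tfrac{x-y}{|x-y|}$; this typo is immaterial for the ensuing kernel analysis.
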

\begin{Bew}
	See \cite[Thm.~2.2]{dr-nta}, \cite[Thm.~6.4]{dr-calderon}, \cite[Thm.~14.3.15]{lpx-book}.
\end{Bew}

\begin{Sa}\label{rellichp(x)}
	Let $G\subseteq \mathbb{R}^d$, $d\ge 2$, be a bounded Lipschitz domain and let $ p \in \mathcal{P}^{\log}(G)$.  Then, there holds the embedding 
	$ W^{1,p(\cdot)}(G)\hookrightarrow\hookrightarrow L^{p(\cdot)}(G)$.
\end{Sa}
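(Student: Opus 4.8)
\begin{Bew}
The plan is to establish compactness directly on sequences. Since $\|u\|_{p(\cdot)}\le\|u\|_{1,p(\cdot)}$, the embedding $W^{1,p(\cdot)}(G)\hookrightarrow L^{p(\cdot)}(G)$ is continuous, so it suffices to show that every bounded sequence $(u_n)_{n\in\Nat}\subseteq W^{1,p(\cdot)}(G)$ has a subsequence converging in $L^{p(\cdot)}(G)$. First I would pass to the $L^1$--scale: since $G$ is bounded, H\"older's inequality gives $L^{p(\cdot)}(G)\hookrightarrow L^1(G)$, so $(u_n)_{n\in\Nat}$ is bounded in $W^{1,1}(G)$, and the classical Rellich--Kondrachov theorem on the bounded Lipschitz domain $G$ (using $1<\tfrac{d}{d-1}$) yields a subsequence, again denoted $(u_n)_{n\in\Nat}$, and a function $u$ with $u_n\to u$ in $L^1(G)$ and, after a further subsequence, $u_n\to u$ a.e.\ in $G$. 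The remaining task — upgrading this $L^1$--convergence to convergence in $L^{p(\cdot)}(G)$ — is where $\log$--H\"older continuity of $p$ enters.

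The key step is a gain of integrability. I would fix a constant $\theta\in(0,1/p^+)$ and put $q(x):=\tfrac{(1-\theta)p(x)}{1-\theta p(x)}$, so that $\tfrac1{p(x)}=\theta\cdot\tfrac11+(1-\theta)\tfrac1{q(x)}$ for all $x\in G$. Then $q\ge p$ pointwise, $q^+<\infty$, and $q=\phi\circ p$ with $\phi(t)=\tfrac{(1-\theta)t}{1-\theta t}$ Lipschitz on $[p^-,p^+]\subseteq[1,1/\theta)$, whence $q\in\mathcal P^{\log}(G)$. Since $q(x)-p(x)=\tfrac{\theta p(x)(p(x)-1)}{1-\theta p(x)}\to0$ uniformly in $x$ as $\theta\to0^+$, while $p^*(x)-p(x)=\tfrac{p(x)^2}{d-p(x)}\ge\tfrac{(p^-)^2}{d}$ wherever $p(x)<d$ (with $p^*(x):=\tfrac{dp(x)}{d-p(x)}$ if $p(x)<d$ and $p^*(x):=\infty$ otherwise), choosing $\theta$ small enough, depending only on $d$ and $p^\pm$, gives $p\le q\le p^*$ a.e.\ in $G$. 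The Sobolev embedding theorem for variable exponent spaces on the bounded Lipschitz domain $G$ (cf.~\cite[Sec.~8.3]{lpx-book}) then furnishes $W^{1,p(\cdot)}(G)\hookrightarrow L^{q(\cdot)}(G)$. In particular $(u_n)_{n\in\Nat}$ is bounded in $L^{q(\cdot)}(G)$, and applying Fatou's lemma to the modular $\rho_{q(\cdot)}$ together with Lemma~\ref{lem:unit_ball_px} shows $u\in L^{q(\cdot)}(G)$ as well, so $M:=\sup_{n\in\Nat}\|u_n-u\|_{q(\cdot)}<\infty$.

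Next I would interpolate. For $v\in L^1(G)\cap L^{q(\cdot)}(G)$ with $\lambda_0:=\|v\|_1>0$, $\lambda_1:=\|v\|_{q(\cdot)}>0$ and $\mu:=\lambda_0^{\theta}\lambda_1^{1-\theta}$, the pointwise identity $|v(x)/\mu|^{p(x)}=(|v(x)|/\lambda_0)^{\theta p(x)}(|v(x)|/\lambda_1)^{(1-\theta)p(x)}$ and Young's inequality with the conjugate exponents $\tfrac1{\theta p(x)}$ and $\tfrac1{1-\theta p(x)}$ (both $\ge1$ as $\theta<1/p^+$) give $|v(x)/\mu|^{p(x)}\le(|v(x)|/\lambda_0)+(|v(x)|/\lambda_1)^{q(x)}$; integrating over $G$ yields $\rho_{p(\cdot)}(v/\mu)\le2$, and Lemma~\ref{lem:unit_ball_px}\,(iv) then gives
\begin{align*}
\|v\|_{p(\cdot)}\le 3^{1/p^-}\,\|v\|_1^{\theta}\,\|v\|_{q(\cdot)}^{1-\theta},
\end{align*}
the inequality being trivial when $\|v\|_1=0$ or $\|v\|_{q(\cdot)}=0$. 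Taking $v=u_n-u$ I obtain $\|u_n-u\|_{p(\cdot)}\le 3^{1/p^-}M^{1-\theta}\|u_n-u\|_1^{\theta}\to0$, which completes the argument: the selected subsequence converges in $L^{p(\cdot)}(G)$, so the embedding is compact.

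I expect the hard part to be the gain-of-integrability step, i.e.\ producing an exponent $q$ that lies in the range $p\le q\le p^*$ of the variable exponent Sobolev embedding \emph{and} is tied to $p$ by $\tfrac1p=\theta+(1-\theta)\tfrac1q$ for a single fixed constant $\theta$, so that H\"older interpolation against $L^1$ lands exactly in $L^{p(\cdot)}(G)$. This is the only point at which $\log$--H\"older continuity is essential — without it the embedding $W^{1,p(\cdot)}(G)\hookrightarrow L^{q(\cdot)}(G)$ with $q$ above $p$ may fail — whereas the rest of the argument rests only on the classical Rellich--Kondrachov theorem and the norm-modular unit ball property from Lemma~\ref{lem:unit_ball_px}.
\end{Bew}
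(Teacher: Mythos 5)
Your proof is correct, and it is a genuinely different route from the paper's: the paper simply cites the result (\cite[Thm.~3.8~(iv)]{KR91}, \cite[Thm.~8.4.5]{lpx-book}), whereas you give a self-contained argument built from the classical Rellich--Kondrachov theorem at the $L^1$-scale, the variable-exponent Sobolev embedding, and an interpolation inequality with a \emph{single, fixed} constant $\theta$. The interpolation step is the nice idea: choosing $q(\cdot)$ so that $\tfrac1{p(x)}=\theta\cdot 1+(1-\theta)\tfrac1{q(x)}$ pointwise makes the Young-inequality split land exactly on the modular $\rho_{p(\cdot)}$, and Lemma~\ref{lem:unit_ball_px}\,(iv) converts the modular bound into a norm bound with a universal constant $3^{1/p^-}$. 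All the elementary verifications (that $q=\phi\circ p$ is log--H\"older, that $q-p\to0$ uniformly as $\theta\to0^+$ while $p^*-p$ is bounded away from zero on $\{p<d\}$, and the Young-inequality computation with conjugate exponents $\tfrac1{\theta p(x)}$, $\tfrac1{1-\theta p(x)}$) check out. What the book's proof buys is a shorter path via the mollification and pointwise estimate machinery already set up in earlier chapters; what your argument buys is transparency and the explicit isolation of where $\log$--H\"older continuity is used (solely in the Sobolev embedding $W^{1,p(\cdot)}(G)\hookrightarrow L^{q(\cdot)}(G)$).

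One small point worth spelling out if you write this up: the Sobolev embedding theorem in \cite[Sec.~8.3]{lpx-book} is most cleanly stated under $p^+<d$. For the general case $p^+<\infty$ you should note that, since your $q$ satisfies $q^+<\infty$, you may replace $p$ by $\tilde p:=\min\{p,d-\delta\}\in\mathcal{P}^{\log}(G)$ for $\delta>0$ so small that $q\le\tilde p^*$ a.e.\ (possible because $\tilde p^*\ge\tfrac{d(d-\delta)}{\delta}\to\infty$ on $\{p\ge d-\delta\}$ and $\tilde p^*=p^*\ge q$ on $\{p<d-\delta\}$), and then chain $W^{1,p(\cdot)}(G)\hookrightarrow W^{1,\tilde p(\cdot)}(G)\hookrightarrow L^{\tilde p^*(\cdot)}(G)\hookrightarrow L^{q(\cdot)}(G)$. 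With that remark included, the argument is complete.
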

\begin{Bew}
	See \cite[Thm. 3.8 (iv)]{KR91}, \cite[Thm. 8.4.5]{lpx-book}.
\end{Bew}

\begin{Sa}\label{kornp(x)}
	Let $G\subseteq \mathbb{R}^d\!$, $d\in \mathbb{N}$, be a bounded
	Lipschitz domain and let $\smash{p\in\mathcal{P}^{\log}(G)}$ with
	$p^- > 1$.  Then, there exists a constant $c>0$ such that
	$\smash{ \|\nabla \bfu\|_{p(\cdot)}\leq c\,( \|\bfD \bfu\|_{p(\cdot)}+\| \bfu\|_{p(\cdot)})}$
	for~every~$\smash{\bfu\in W^{1,p(\cdot)}(G)}$.
\end{Sa}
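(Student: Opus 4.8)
The plan is to derive the inequality from a Ne\v{c}as--type negative--norm estimate, using the algebraic Korn identity as the device that prevents circularity. Write $W^{-1,p(\cdot)}(G):=(W^{1,p'(\cdot)}_0(G))^*$, where $p'(\cdot)=p(\cdot)/(p(\cdot)-1)$; since $s\mapsto s/(s-1)$ is $C^1$ on the compact interval $[p^-,p^+]\subset(1,\infty)$, it preserves global $\log$--H\"older continuity up to a controlled constant, hence $p'\in\mathcal P^{\log}(G)$ with $1<(p')^-\le (p')^+<\infty$. The starting point is the distributional identity
\begin{align*}
	\partial_k\partial_l u_i=\partial_l(\bfD\bfu)_{ik}+\partial_k(\bfD\bfu)_{il}-\partial_i(\bfD\bfu)_{kl}\,,
\end{align*}
valid for every $\bfu\in W^{1,1}_{\loc}(G)$, which shows that all second derivatives of $\bfu$ are, after one integration by parts, expressible through $\bfD\bfu\in L^{p(\cdot)}(G)$.

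The main step is the Ne\v{c}as estimate: for $p\in\mathcal P^{\log}(G)$ with $p^->1$ there is $c>0$ such that $\|f\|_{p(\cdot)}\le c\,(\|f\|_{W^{-1,p(\cdot)}(G)}+\|\nabla f\|_{W^{-1,p(\cdot)}(G)})$ for every $f\in L^{p(\cdot)}(G)$ with $\nabla f\in W^{-1,p(\cdot)}(G)^d$. I would prove it by duality and the Bogovski\u{\i} operator. First, $\|f\|_{p(\cdot)}\le 2\sup\{\langle f,\phi\rangle: \phi\in C_0^\infty(G),\ \|\phi\|_{p'(\cdot)}\le1\}$ by the associate norm and density of $C_0^\infty(G)$ in $L^{p'(\cdot)}(G)$. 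Given such $\phi$, split $\phi=\fint_G\phi+(\phi-\fint_G\phi)$; since $\phi-\fint_G\phi\in L^{p'(\cdot)}_0(G)$ and $p'\in\mathcal P^{\log}(G)$ with $(p')^->1$, Theorem~\ref{bogp(x)} applied with the exponent $p'$ yields $\boldsymbol\Psi:=\mathcal B_G(\phi-\fint_G\phi)\in W^{1,p'(\cdot)}_0(G)$ with $\Div\boldsymbol\Psi=\phi-\fint_G\phi$ and $\|\boldsymbol\Psi\|_{1,p'(\cdot)}\le c\,\|\phi\|_{p'(\cdot)}$. Then $\langle f,\phi\rangle=\fint_G\phi\,\langle f,1\rangle+\langle f,\Div\boldsymbol\Psi\rangle$, and approximating $\boldsymbol\Psi$ by smooth compactly supported fields gives $\langle f,\Div\boldsymbol\Psi\rangle=-\langle\nabla f,\boldsymbol\Psi\rangle$, whence $|\langle f,\Div\boldsymbol\Psi\rangle|\le c\,\|\nabla f\|_{W^{-1,p(\cdot)}(G)}\|\phi\|_{p'(\cdot)}$. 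For the term $\langle f,1\rangle=\int_G f$ (meaningful since $L^{p(\cdot)}(G)\hookrightarrow L^1(G)$), fix once and for all $\phi^*\in C_0^\infty(G)$ with $\int_G\phi^*=1$ and apply Theorem~\ref{bogp(x)} again to $1-\phi^*\in L^{p'(\cdot)}_0(G)$ to write $1-\phi^*=\Div\boldsymbol Z$ with $\boldsymbol Z\in W^{1,p'(\cdot)}_0(G)$; then $\int_G f=\langle f,\phi^*\rangle+\langle f,\Div\boldsymbol Z\rangle=\langle f,\phi^*\rangle-\langle\nabla f,\boldsymbol Z\rangle$, so $|\int_G f|\le c\,(\|f\|_{W^{-1,p(\cdot)}(G)}+\|\nabla f\|_{W^{-1,p(\cdot)}(G)})$. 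Combining these bounds with $|\fint_G\phi|\le c\,\|\phi\|_{p'(\cdot)}$ yields the Ne\v{c}as estimate.

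Finally, I would apply the Ne\v{c}as estimate to $f=\partial_k u_i\in L^{p(\cdot)}(G)$ for each pair $i,k$. On the one hand, for $\phi\in C_0^\infty(G)$ one has $\langle\partial_k u_i,\phi\rangle=-\langle u_i,\partial_k\phi\rangle$, so $\|\partial_k u_i\|_{W^{-1,p(\cdot)}(G)}\le 2\,\|\bfu\|_{p(\cdot)}$ by the variable exponent H\"older inequality and density of $C_0^\infty(G)$ in $W^{1,p'(\cdot)}_0(G)$. On the other hand, by the displayed Korn identity, $\langle\partial_l\partial_k u_i,\phi\rangle=-\langle(\bfD\bfu)_{ik},\partial_l\phi\rangle-\langle(\bfD\bfu)_{il},\partial_k\phi\rangle+\langle(\bfD\bfu)_{kl},\partial_i\phi\rangle$ for all $\phi\in C_0^\infty(G)$, hence $\|\nabla\partial_k u_i\|_{W^{-1,p(\cdot)}(G)}\le c\,\|\bfD\bfu\|_{p(\cdot)}$. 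The Ne\v{c}as estimate then gives $\|\partial_k u_i\|_{p(\cdot)}\le c\,(\|\bfu\|_{p(\cdot)}+\|\bfD\bfu\|_{p(\cdot)})$, and summing over $i,k$ completes the proof. The heart of the argument is the Ne\v{c}as estimate, and the only place where the hypotheses are genuinely used is Theorem~\ref{bogp(x)}: both boundedness of the Bogovski\u{\i} operator and Lipschitz regularity of $G$ are indispensable, and global $\log$--H\"older continuity of $p$ (equivalently of $p'$) cannot be dropped --- which is precisely why, in the present paper, Theorem~\ref{kornp(x)} can be invoked only for the constant exponent $p^-$. Alternatively, the $W^{1,p(\cdot)}_0(G)$--version $\|\nabla\bfw\|_{p(\cdot)}\le c\,\|\bfD\bfw\|_{p(\cdot)}$ can be obtained directly from boundedness of the Calder\'on--Zygmund operators $\partial_j\partial_l\Delta^{-1}$ on $L^{p(\cdot)}(\R^d)$ together with the classical Korn identity, after extending $p$ to a globally $\log$--H\"older exponent on $\R^d$; this again rests on the $\log$--H\"older hypothesis.
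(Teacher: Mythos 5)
The paper offers no proof of its own; it merely cites \cite[Thm.~14.3.23]{lpx-book}. Your proposal is a genuine, essentially complete proof, and it follows the same strategic route that the cited reference uses: reduce Korn's inequality to a Ne\v{c}as negative-norm estimate via the algebraic identity
$\partial_k\partial_l u_i=\partial_l(\bfD\bfu)_{ik}+\partial_k(\bfD\bfu)_{il}-\partial_i(\bfD\bfu)_{kl}$,
and prove the negative-norm estimate by duality and the Bogovski\u{\i} operator (Theorem~\ref{bogp(x)}). The places where the hypotheses enter are correctly identified: the Lipschitz regularity of $G$ and the global $\log$--H\"older continuity of $p$ (equivalently of $p'$) are both used exclusively through the boundedness of $\mathcal B_G$. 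The alternative Calder\'on--Zygmund route you sketch is also a standard one, but, as you note, it directly yields only the $W^{1,p(\cdot)}_0$ version and requires an extension of $p$ to $\R^d$; the Ne\v{c}as route is therefore the cleaner one for the present $W^{1,p(\cdot)}(G)$ statement.

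One small arithmetic slip: to treat the term $\langle f,1\rangle$ you need the correction field $\boldsymbol Z$ with $\Div\boldsymbol Z=1-\phi^*$, which forces $1-\phi^*\in L^{p'(\cdot)}_0(G)$, i.e. $\int_G\phi^*=\vert G\vert$ (equivalently $\fint_G\phi^*=1$), not $\int_G\phi^*=1$ as written. This does not affect the argument in substance. Everything else (the identity $\langle f,\Div\boldsymbol\Psi\rangle=-\langle\nabla f,\boldsymbol\Psi\rangle$ by definition of $\nabla f\in W^{-1,p(\cdot)}(G)$ and density of $C_0^\infty(G)$ in $W^{1,p'(\cdot)}_0(G)$, the estimate $\vert\fint_G\phi\vert\le c\,\|\phi\|_{p'(\cdot)}$ on a bounded domain, and the final application to $f=\partial_k u_i$) is correct.
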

\begin{Bew}
	See 
	 \cite[Thm.~14.3.23]{lpx-book}.
\end{Bew}

\pagebreak
\begin{Sa}\label{thm:Ltp(x)}Let $G\subseteq \mathbb{R}^d$,
	$d\in\mathbb{N}$, be a bounded
	Lipschitz~domain,~${p\in\mathcal{P}^{\log}(G)}$ with $p^->1$ and
	 $(\bfu^n)_{n\in \mathbb{N}} \subseteq \smash{W^{1,p(\cdot)}_0(G)}$  such that
	$\bfu^n \weakto \bfzero$ in $\smash{W^{1,p(\cdot)}_0(G)}$ $(n\to \infty)$.  Then,
	for any ${j, n\hspace*{-0.1em}\in\hspace*{-0.1em} \setN}$, there
	exist
	$\bfu^{n,j}\hspace*{-0.2em} \in\hspace*{-0.1em} W^{1,\infty}_0(G)$
	and~$\smash{\lambda_{n,j}\hspace*{-0.1em} \in \hspace*{-0.1em}\big
		[2^{2^j}\hspace*{-0.1em}, 2^{2^{j+1}}\big ]} $~such~that
	\begin{align}
		\begin{split}
			\smash{ \lim_{n\to \infty}} \big ( {\sup}_{j \in \setN}
			\norm{\bfu^{n,j}}_{\infty}\big ) &=0\,,\\
			\norm{\nabla \bfu^{n,j}}_{\infty} &\leq c\, \lambda_{n,j}\,,
			\\
			\bignorm{\nabla \bfu^{n,j}\, \chi_{
					\set{\bfu^{n,j} \not= \bfu^n}}}_{p(\cdot)} &\leq c\, \big\| \lambda_{n,j} \chi_{\set{\bfu^{n,j} \not= \bfu^n}}\big\|_{p(\cdot)} \,,
			\\[-1mm]
			\smash{\limsup _{n \to \infty} }\,\big\| \lambda_{n,j} \chi_{\set{\bfu^{n,j} \not= \bfu^n}}\big\|_{p(\cdot)} &\leq c\, 2^{-j/p^+}\,,
		\end{split}\label{eq:C18px}
	\end{align}
	where $c=c(d,p,G)>0$.  Moreover, for any $j \in
	\setN$, $\nabla \bfu^{n,j} \weakto \bfzero $~in~$L^s(G) $~${(n \to
		\infty)}$,
	$s \in [1,\infty)$, and $\nabla \bfu^{n,j} \stackrel{*}{\weakto}
	\bfzero$ in $L^\infty(G)$ $(n \to
	\infty)$. 
\end{Sa}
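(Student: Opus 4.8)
The plan is to run the variable--exponent Lipschitz truncation technique of \cite{dms} (see also \cite{fms2}, \cite{fms}), with the one extra device that the ``bad set'' is built from two maximal functions at two different levels, so that the truncations become not just equi--Lipschitz but also equi--small in $L^\infty$. First I would pass to the whole space: extend $p$ to some $\tilde p\in\mathcal{P}^{\log}(\R^d)$ with $\tilde p^-=p^-$, $\tilde p^+=p^+$ (such an extension exists, cf.\ \cite{lpx-book}), and extend every $\bfu^n$ by zero to $\R^d$; since $\bfu^n$ is supported in $G$ this changes neither $\rho_{p(\cdot)}(\bfu^n)$ nor $\rho_{p(\cdot)}(\nabla\bfu^n)$. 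The sequence is bounded, $K:=\sup_n\|\bfu^n\|_{1,p(\cdot)}<\infty$, and by the compact embedding of Theorem~\ref{rellichp(x)} we have $\bfu^n\to\bfzero$ in $L^{p(\cdot)}(G)$; set $\delta_n:=\|\bfu^n\|_{p(\cdot)}\to 0$ and $\epsilon_n:=\sqrt{\delta_n}+1/n\downarrow 0$, so that $\delta_n/\epsilon_n\to 0$. Let $M$ denote the Hardy--Littlewood maximal operator on $\R^d$.

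For a dyadic level $\lambda=2^k$ put $\mathcal{O}^n_\lambda:=\{M(\nabla\bfu^n)>\lambda\}\cup\{M(\bfu^n)>\epsilon_n\}$ and let $\bfu^{n,\lambda}$ be the Lipschitz truncation of $\bfu^n$ associated with $\mathcal{O}^n_\lambda$: it equals $\bfu^n$ on $\R^d\setminus\mathcal{O}^n_\lambda$ and, on $\mathcal{O}^n_\lambda$, is obtained by gluing the averages of $\bfu^n$ over a Whitney covering of $\mathcal{O}^n_\lambda$ via a subordinate partition of unity. Evaluating these averages at points of the good set $\R^d\setminus\mathcal{O}^n_\lambda$ lying in a fixed dilate of each Whitney cube, one gets in the standard way (cf.\ \cite{dms}) that $\bfu^{n,\lambda}\in W^{1,\infty}(\R^d)$, $\bfu^{n,\lambda}=\bfu^n$ a.e.\ on $\R^d\setminus\mathcal{O}^n_\lambda$, $\|\nabla\bfu^{n,\lambda}\|_\infty\le c\,\lambda$ (this uses only the gradient threshold $\lambda$, via Poincaré on the Whitney cubes), and $\|\bfu^{n,\lambda}\|_\infty\le c\,\epsilon_n$ (this uses only the value threshold $\epsilon_n$). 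Because $\bfu^n$ vanishes near $\partial G$, the truncation lies in $W^{1,\infty}_0(G)$; if one prefers, one runs the construction on a slightly larger bounded Lipschitz domain and restricts.

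Next I would fix the level. The maximal operator is bounded on $L^{\tilde p(\cdot)}(\R^d)$ --- this is precisely where $\tilde p\in\mathcal{P}^{\log}(\R^d)$ and $p^->1$ enter (cf.\ \cite{lpx-book}) --- so by Lemma~\ref{lem:unit_ball_px} one gets $\rho_{\tilde p(\cdot)}(M(\nabla\bfu^n))\le c(1+K^{p^+})=:c_1$, uniformly in $n$. With $A^n_\ell:=\{2^\ell<M(\nabla\bfu^n)\le 2^{\ell+1}\}$ one has $\sum_\ell\int_{A^n_\ell}2^{\ell\tilde p(x)}\,dx\le c_1$, and from $2^{k\tilde p(x)}\chi_{\{M(\nabla\bfu^n)>2^k\}}=\sum_{\ell\ge k}2^{k\tilde p(x)}\chi_{A^n_\ell}\le\sum_{\ell\ge k}2^{(k-\ell)p^-}2^{\ell\tilde p(x)}\chi_{A^n_\ell}$, summation over $k\in\{2^j,\dots,2^{j+1}\}$ together with the convergence of $\sum_{m\ge 0}2^{-m p^-}$ gives $\sum_{k=2^j}^{2^{j+1}}\rho_{\tilde p(\cdot)}\big(2^k\chi_{\{M(\nabla\bfu^n)>2^k\}}\big)\le c\,c_1$, uniformly in $n$ and $j$. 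As this is a sum of $2^j+1$ nonnegative terms, there is $k_{n,j}\in\{2^j,\dots,2^{j+1}\}$ with $\rho_{\tilde p(\cdot)}\big(2^{k_{n,j}}\chi_{\{M(\nabla\bfu^n)>2^{k_{n,j}}\}}\big)\le c\,2^{-j}$; set $\lambda_{n,j}:=2^{k_{n,j}}\in[2^{2^j},2^{2^{j+1}}]$ and $\bfu^{n,j}:=\bfu^{n,\lambda_{n,j}}$. The homogeneity $\rho_{\tilde p(\cdot)}(u/t)\le t^{-p^+}\rho_{\tilde p(\cdot)}(u)$ for $0<t\le 1$ then upgrades this to $\|\lambda_{n,j}\chi_{\{M(\nabla\bfu^n)>\lambda_{n,j}\}}\|_{p(\cdot)}\le c\,2^{-j/p^+}$ (absorbing the finitely many $j$ with $c\,2^{-j}>1$ into the constant).

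It remains to collect the conclusions. The bound $\|\nabla\bfu^{n,j}\|_\infty\le c\,\lambda_{n,j}$ is the second estimate in \eqref{eq:C18px}, and $\|\bfu^{n,j}\|_\infty\le c\,\epsilon_n$, being uniform in $j$ and tending to $0$, gives the first. Since $\{\bfu^{n,j}\neq\bfu^n\}\subseteq\mathcal{O}^n_{\lambda_{n,j}}$, the third estimate follows from $\|\nabla\bfu^{n,j}\|_\infty\le c\,\lambda_{n,j}$ and monotonicity of $\|\cdot\|_{p(\cdot)}$; splitting $\mathcal{O}^n_{\lambda_{n,j}}$ into its two parts, the $\{M(\nabla\bfu^n)>\lambda_{n,j}\}$--part contributes $\le c\,2^{-j/p^+}$ by the previous step, while the weak type $(p^-,p^-)$ estimate for $M$ and $\|\bfu^n\|_{p^-}\le c\,\delta_n$ give $|\{M(\bfu^n)>\epsilon_n\}|\le c(\delta_n/\epsilon_n)^{p^-}\to 0$, so $\|\lambda_{n,j}\chi_{\{M(\bfu^n)>\epsilon_n\}}\|_{p(\cdot)}\to 0$ for each fixed $j$ (recall $\lambda_{n,j}\le 2^{2^{j+1}}$); hence $\limsup_{n\to\infty}\|\lambda_{n,j}\chi_{\{\bfu^{n,j}\neq\bfu^n\}}\|_{p(\cdot)}\le c\,2^{-j/p^+}$, the fourth estimate. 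Finally, for fixed $j$ the sequence $(\nabla\bfu^{n,j})_n$ is bounded in $L^\infty(G)$ (by $c\,\lambda_{n,j}\le c\,2^{2^{j+1}}$) and $\bfu^{n,j}\to\bfzero$ uniformly, hence $\nabla\bfu^{n,j}\to\bfzero$ in $\mathcal{D}'(G)$; by uniqueness of the distributional limit this forces $\nabla\bfu^{n,j}\stackrel{*}{\weakto}\bfzero$ in $L^\infty(G)$ and, since $|G|<\infty$, $\nabla\bfu^{n,j}\weakto\bfzero$ in $L^s(G)$ for every $s\in[1,\infty)$. The two delicate points are the uniform--in--$j$ $L^\infty$ smallness --- handled by the two--level bad set with $\epsilon_n$ decaying more slowly than $\|\bfu^n\|_{p(\cdot)}$, so that $\{M(\bfu^n)>\epsilon_n\}$ is negligible in $n$ yet still forces $\|\bfu^{n,j}\|_\infty\lesssim\epsilon_n$ --- and the adaptive pigeonhole choice of $\lambda_{n,j}$ inside $[2^{2^j},2^{2^{j+1}}]$ via the maximal inequality and the geometric--series bound over the $\sim 2^j$ dyadic levels, which is what manufactures the $2^{-j/p^+}$ decay; the boundary behaviour needed for $W^{1,\infty}_0(G)$ is routine.
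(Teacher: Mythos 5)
Your proposal reproduces, correctly and in detail, the proof of \cite[Thm.~4.4]{dms} (equivalently \cite[Cor.~9.5.2]{lpx-book}), which is precisely what the paper cites for this theorem: the two-level bad set $\{M(\nabla\bfu^n)>\lambda\}\cup\{M(\bfu^n)>\epsilon_n\}$, the modular bound from boundedness of $M$ on $L^{\tilde p(\cdot)}(\R^d)$ after a $\log$--H\"older extension of $p$, and the pigeonhole choice of $\lambda_{n,j}$ among the $\sim 2^j$ dyadic levels in $[2^{2^j},2^{2^{j+1}}]$ are exactly the devices used there, and your bookkeeping of the four estimates in \eqref{eq:C18px} and the weak/weak-$*$ convergences is sound. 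The one place that needs polishing is the boundary step: a function in $W^{1,p(\cdot)}_0(G)$ need not vanish in a neighbourhood of $\partial G$, so the remark ``because $\bfu^n$ vanishes near $\partial G$ the truncation lies in $W^{1,\infty}_0(G)$'' is not an argument, and running the construction on a slightly larger domain and restricting does not by itself produce zero trace on $\partial G$ either; the actual fix, as in the cited sources, is to adjoin $\R^d\setminus\overline G$ to the bad set and use that the Whitney averages attached to cubes whose dilates miss $\overline G$ are zero, which then propagates zero boundary values to $\bfu^{n,j}$ without spoiling any of the estimates.
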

\begin{Bew}
	See \cite[Thm.~4.4]{dms}, \cite[Cor.~9.5.2]{lpx-book}.
\end{Bew}

\section{The electric field $\bE$}\label{sec:E}

We first note that the system \eqref{maxwell}  is
separated from \eqref{NS}, in the sense that one can first solve the quasi-static
Maxwell's equations yielding an electric~field~$\bE$, which then, in turn, enters into \eqref{NS} as a parameter through the stress tensors.

It is proved in \cite{pi81}, \cite{pi84}, \cite{rubo}, that for bounded Lipschitz domains,~there
exists a solution\footnote{Here, we employ the standard function spaces
  $H(\curl):=\set{\bv \in L^2(\Omega)\fdg \curl \bv \in L^2(\Omega)}$,
  $H(\divo):=\set{\bv \in L^2(\Omega)\fdg \divo \bv \in L^2(\Omega)}$
  and $H^{-1/2}(\partial \Omega):= (H^{1/2}(\partial \Omega))^*$.\vspace*{-0.5cm}}
$\E \in H(\curl) \cap H(\divo)$ of the system (\ref{maxwell})
with ${\norm{\bE}_2 \le c\, \norm{\bE_0}_{H^{-1/2}(\partial \Omega)}}$.
A more detailed analysis of the properties of the electric field $\bE$
can be found~in~\cite{frank-phd}. Let us summarize these results here.
Combining $\eqref{maxwell}_1$ and $\eqref{maxwell}_2$,~we~obtain~that
\begin{align}
  \smash{-\Delta\mathbf{E}=\curl\curl \mathbf{E}-\nabla \Div\mathbf{E}=0\,,}\label{eq:harmonic}
\end{align}
i.e.,~the electric field is a harmonic function and, thus, real analytic.~In~particular, for a harmonic function, we can characterize its zero set as follows:

\begin{Lem}\label{Untermannigfaltigkeit}
  Let $\Omega\subseteq\R^d$, $d\in \mathbb{N}$, be a bounded domain and
  $u:\Omega\to\R$~a~non-trivial analytic function. Then, 
 $u^{-1}(0)$ is a union of $C^1$--manifolds~$(M_i)_{i=1,\cdots,m}$, $m\in \mathbb{N}$,
  with $\dim M_i\leq d-1$ for every $i=1,\cdots,m$, and $\abs{u^{-1}(0)}=0$.
\end{Lem}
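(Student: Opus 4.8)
The plan is to localize and argue by induction on the dimension $d$, using the stratification of the zero set of a real-analytic function. First I would fix a point $x_0 \in u^{-1}(0)$ and, since $u \not\equiv 0$, observe that not all derivatives of $u$ vanish at $x_0$; let $k \ge 1$ be the smallest order of a non-vanishing derivative. If $k=1$, then $\nabla u(x_0) \neq \bfzero$ and the implicit function theorem gives a neighborhood $U$ of $x_0$ in which $u^{-1}(0) \cap U$ is a $C^1$ (indeed real-analytic) hypersurface of dimension $d-1$. If $k \ge 2$, pick a multi-index $\alpha$ with $|\alpha| = k-1$ and $\partial^\alpha u(x_0) = 0$ but $\nabla \partial^\alpha u(x_0) \neq \bfzero$ (such an $\alpha$ exists because some $k$-th derivative is nonzero). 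The function $v := \partial^\alpha u$ is again real-analytic, non-trivial near $x_0$, and $\nabla v(x_0) \neq \bfzero$, so $v^{-1}(0)$ is locally a $(d-1)$-dimensional real-analytic hypersurface $N$; moreover $u^{-1}(0) \cap U \subseteq N \cap U$, and on the manifold $N$ the restriction $u|_N$ is a real-analytic function of $d-1$ variables, either identically zero (in which case $u^{-1}(0) \cap U = N \cap U$ is already a $(d-1)$-manifold) or non-trivial, in which case its zero set is, by the inductive hypothesis applied in dimension $d-1$, a finite union of $C^1$-manifolds of dimension at most $d-2$.

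Next I would assemble the local pieces into a global statement. Covering $\overline{u^{-1}(0)}$ — which is relatively closed in $\Omega$ — is delicate because $\Omega$ need not be bounded away from its boundary in a way that yields a genuinely finite cover; here I would use that the statement only claims a union of $C^1$-manifolds indexed by a finite set $(M_i)_{i=1}^m$, so I would either restrict attention to the fact that the zero set is, locally everywhere, a finite union of embedded $C^1$-submanifolds of dimension $\le d-1$, and note that the countable union of such local pieces can be re-organized by dimension (a standard Lindelöf/second-countability argument produces a countable locally finite cover, and the manifolds of each fixed dimension can be merged), or — and this is likely what the authors intend given the application only needs the measure-zero conclusion — invoke the classical result that the zero set of a non-trivial real-analytic function is a locally finite union of real-analytic submanifolds of codimension $\ge 1$ (a semianalytic stratification, cf.\ {\L}ojasiewicz). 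The finiteness of $m$ is the least essential part of the claim and for the purposes of Lemma~\ref{Untermannigfaltigkeit} it suffices that the pieces form a countable family with the stated dimension bound.

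Finally, the measure-zero assertion $\abs{u^{-1}(0)} = 0$ follows from the dimensional bound: a $C^1$-manifold of dimension $\le d-1$ in $\R^d$ is contained in a countable union of $C^1$-graphs, each of which is Lebesgue-null, so a countable union of such manifolds is null. Alternatively, and more self-containedly, one can prove $\abs{u^{-1}(0)}=0$ directly and independently of the manifold structure: by Fubini, for a.e.\ choice of the remaining coordinates the slice is the zero set of a non-trivial real-analytic function of one variable (non-triviality of almost every slice again follows from $u \not\equiv 0$ and analyticity, by an induction on $d$), hence a discrete, hence null, subset of $\R$; integrating gives the claim.

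The main obstacle I expect is the passage from the purely local normal form (implicit function theorem plus induction on dimension) to the clean global statement ``a union of $C^1$-manifolds $(M_i)_{i=1,\dots,m}$ with $m \in \mathbb{N}$'': honest global finiteness of the number of strata is false for a general analytic function on a general bounded domain (one can accumulate infinitely many components toward $\partial\Omega$), so the proof must either read $m$ as allowing a countable index (treating $\mathbb{N}$-indexing loosely) or lean on the subanalytic stratification theory. The dimension bound and the null-set conclusion, by contrast, are routine once the local real-analytic hypersurface structure is in hand.
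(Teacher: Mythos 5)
Your inductive step has a genuine gap. At a point $x_0$ where $u$ vanishes to order $k\ge 2$, you choose $\alpha$ with $|\alpha|=k-1$, $\partial^\alpha u(x_0)=0$ and $\nabla\partial^\alpha u(x_0)\neq\bfzero$, set $v:=\partial^\alpha u$ and $N:=v^{-1}(0)$ locally, and then assert $u^{-1}(0)\cap U\subseteq N\cap U$. This inclusion is false: $u(x)=0$ does not force $\partial^\alpha u(x)=0$. A concrete counterexample is $u(x,y)=x^2-y^2$ at $x_0=(0,0)$, where $k=2$; taking $\alpha=(1,0)$ gives $v=2x$ and $N=\{x=0\}$, yet $u^{-1}(0)=\{y=x\}\cup\{y=-x\}\not\subseteq\{x=0\}$ in any neighbourhood of the origin. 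What is true is only that the subset of zeros of vanishing order at least $k$, i.e.\ $\bigcap_{|\beta|\le k-1}\{\partial^\beta u=0\}$, lies in $N$ near $x_0$; the zeros of lower order must be stripped off first (they are regular points of lower-order derivatives), so the induction has to be organised over the stratification by vanishing order rather than over a single local hypersurface $N$ that is supposed to absorb the whole zero set near $x_0$.

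Your alternative Fubini argument for $|u^{-1}(0)|=0$ — the only conclusion the paper actually uses downstream, and for which it simply cites \cite{frank-phd} and \cite[Lem.~3.1]{erw} without giving a proof — is the right route but needs sharpening. Since $\Omega$ is a domain (hence connected) and $u\not\equiv 0$, the identity theorem gives that $u$ has finite vanishing order $k$ at each zero $x_0$; after a rotation one may take $\partial_1^k u(x_0)\neq 0$, so by continuity $\partial_1^k u\neq 0$ on a neighbourhood $U$ of $x_0$, and Rolle's theorem bounds the number of zeros of $x_1\mapsto u(x_1,x')$ on each line $U\cap\{x'=\text{const}\}$ by $k$. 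Fubini then yields $|u^{-1}(0)\cap U|=0$, and a countable cover of $\Omega$ finishes. This replaces the vague ``non-triviality of a.e.\ slice by induction on $d$'': as you state it, that claim silently uses connectedness (drop it and $u\equiv 0$ on one component produces a positive-measure zero set), and the rotation is what makes the one-variable non-triviality honest. Your resolution of the finiteness of $m$ — grouping the local pieces by dimension so that $m\le d$, with disconnected manifolds allowed — is the intended reading and is fine.
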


\begin{Bew} See \cite{frank-phd}, \cite[Lem.~3.1]{erw}.
\end{Bew}

Finally, we observe that using the regularity theory for Maxwell's equations 
(cf.~\cite{gsch}, \cite{rubo}), one can give conditions on the
boundary data $\bE_0$ ensuring that the electric field $\bE$ is globally
bounded, i.e., $\|\bE\|_\infty \le c(\bE_0)$. Based on these two
observations, we will make the following assumption on the electric
field $\bE$: 
\begin{Vss}\label{VssE}
  The electric field $\mathbf{E}$ satisfies $\mathbf{E}\in C^\infty(\Omega) \cap
  L^\infty(\Omega)$, and the closed set $\abs{\bE}^{-1}(0)$ is a null
  set, i.e., 
   ${\Omega_0:= \{ x\in \Omega \fdg \vert
  \mathbf{E}(x)\vert>0\}}$~has~full~measure.
\end{Vss}

Note that there, indeed, exist solutions of the quasi-static Maxwell's equations that satisfy
Assumption \ref{VssE}, but do not belong to any H\"older space. In
particular, there exist  a solution 
of the quasi-static Maxwell's equations such that for a standard choice of $\hat p\in \mathcal{P}^{\log}(\mathbb{R})$, we have that
$p:=\hat p\circ \vert \bE\vert^2\notin \mathcal{P}^{\log}(\Omega)$.

\begin{Bem}
	Let $\Omega:=[-2,0]\times [-1,1]\subseteq \mathbb{R}^2$ and
        let $\bE_0\in H(\textup{div};\Omega)$  be a vector field
        defined via $\bE_0(x_1,x_2):=(1/\log(\vert
        \log(\frac{1}{4}\vert x_2\vert )\vert),0)^\top $ for every
        $(x_1,x_2)^\top\in \Omega$. Then,  in analogy with
        \cite[Thm. 3.21]{rubo}, a solution $\bE\in
        H(\textup{curl};\Omega)\cap H(\textup{div};\Omega)$ of the
        quasi-static Maxwell's equation with prescribed
        data~${\bE_0\in H(\textup{div};\Omega)}$~is~given via the gradient of a solution $u\in W^{1,2}(\Omega)/\mathbb{R}$ of the Neumann problem 
	\begin{align}
          \begin{aligned}
            -\Delta u&= 0&&\quad\text{ in }\Omega\,,\\
            \nabla u\cdot\bn&=\bE_0\cdot\bn&&\quad\text{ on
            }\partial\Omega\,,
          \end{aligned}\label{eq:Neumann}
	\end{align}
	i.e., $\bE=\nabla u$. With the help of an approximation of \eqref{eq:Neumann} using finite elements, the following pictures for the electric field are obtained:
	\begin{figure}[h!]
			\centering
			\hspace*{-0.75cm}\includegraphics[width=13cm]{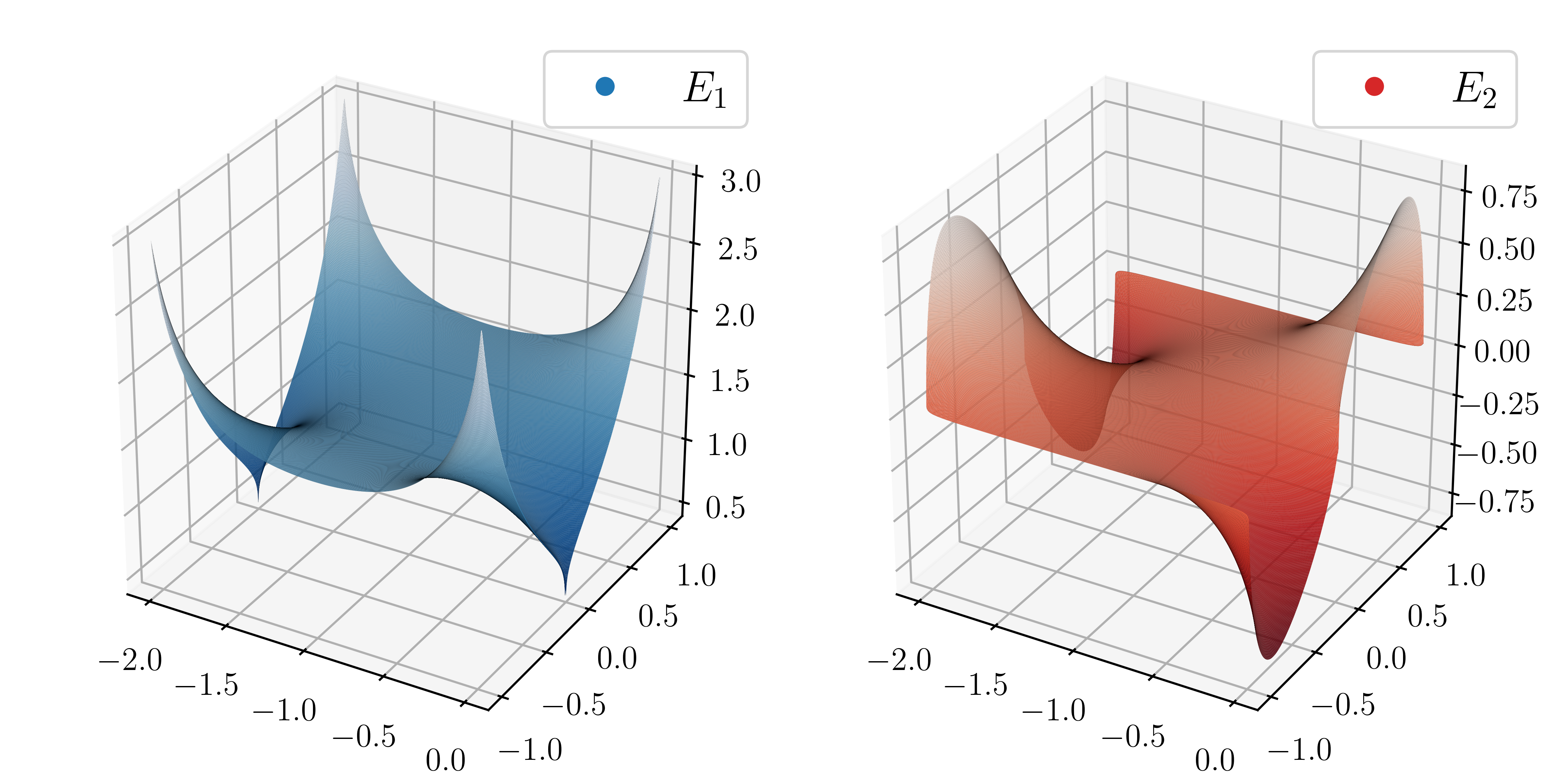}
			\caption{Plots of the first (blue/left) and the second (red/right) component of the numerically determined electric field.}
	\end{figure}

	\hspace*{-6mm}These pictures indicate that $\bE\in C^0(\overline{\Omega})$, or at least that $\bE\in L^\infty(\Omega)$. In addition, \eqref{eq:harmonic} in conjunction with Weyl's lemma imply that  $\bE\in C^\infty(\Omega)$. Note that since $E_1:=\bE\cdot \be_1=\bE_0\cdot \be_1$ on $\{0\}\times [-1,1]$, we find that ${\bE\notin C^{0,\alpha}(\overline{\Omega})}$~for~any~${\alpha\in \left(0,1\right]}$. Apart from that, if $\hat p\in \mathcal{P}^{\log}(\mathbb{R})$ is given via $\hat p(x):=1/\log(e+1/\vert x\vert)$ for~all~${x\in \mathbb{R}}$, then it is easily checked that $p:=\hat p\circ \vert \bE\vert^2$ satisfies ${p(0,x_2)\log(e+1/\vert x_2\vert)\to \infty}$ as $x_2\to  0$ and, thus, $p\notin \mathcal{P}^{\log}(\Omega)$.
\end{Bem}

In the sequel, we do not use that $\bE$ is the solution of the
quasi-static Maxwell's equations~\eqref{maxwell}, but we will only use
Assumption \ref{VssE}.  

\begin{Sa}\label{compactnew}
  Let $\Omega\subseteq \R^d$,
  ${d\in \mathbb{N}}$,~be~open,~$p\in\left[1,\infty\right)$ and let
  Assumption~\ref{VssE} be satisfied. Set $\smash{p^*:=\frac{dp}{d-p}}$
  if $p<d$ and $p^*:=\infty$ if $p\ge d$. Then, for any open~set
  $\Omega'\subset\subset \Omega$~with~${\partial\Omega'\in C^{0,1}}$
  and   any $\smash{\alpha\ge 1+\frac{2}{p}}$, it holds
  \begin{align*}
    \smash{X^{p,p}_{\nabla}(\Omega;\vert \mathbf{E}\vert^2) \vnor
    L^{r}(\Omega';\vert \bfE\vert^{\alpha r})}
  \end{align*}
  with $r \in [1,p^*]$ if $p\neq d$ and $r \in [1,p^*)$ if $p=d$.
\end{Sa}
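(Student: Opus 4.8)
The plan is to reduce the assertion to the classical Sobolev embedding by multiplying $\bu$ by the power $\abs{\bE}^{\alpha}$ of the (bounded, smooth) electric field. Since $\abs{\bE}^2\le\norm{\bE}_\infty^2$ on $\Omega$ and $r$ is fixed, for every $\alpha\ge 1+\tfrac2p$ one has $\abs{\bE}^{\alpha r}\le c\,\abs{\bE}^{(1+2/p)r}$ pointwise, hence $L^{r}(\Omega';\abs{\bE}^{(1+2/p)r})\hookrightarrow L^{r}(\Omega';\abs{\bE}^{\alpha r})$ continuously, and it suffices to treat the borderline case $\alpha=1+\tfrac2p$. I write $g:=\abs{\bE}^2\in C^\infty(\Omega)$ (an admissible weight by the remark following Assumption~\ref{weightp(x)}) and $\gamma:=\tfrac\alpha2=\tfrac12+\tfrac1p$.

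The crux is that multiplication by $\abs{\bE}^{\alpha}=g^{\gamma}$ defines a bounded linear operator $T\colon X^{p,p}_{\nabla}(\Omega;g)\to W^{1,p}(\Omega')$. The key pointwise estimate, valid on the compact set $\overline{\Omega'}\subset\Omega$ on which $\bE\in C^1$, is $\abs{\nabla g}=2\,\abs{\sum_i E_i\nabla E_i}\le 2\abs{\bE}\,\abs{\nabla\bE}\le c\,g^{1/2}$, which (e.g.\ via the regularizations $(g+\varepsilon)^{\gamma}$, $\varepsilon\to0$) gives $g^{\gamma}\in W^{1,\infty}(\Omega')$ with $\abs{\nabla(g^{\gamma})}\le c\,g^{\gamma-1/2}=c\,g^{1/p}$ a.e.\ on $\Omega'$. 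Hence, for a smooth $\bu\in\mathcal{V}^{p,p}_{\nabla,g}$, the product rule together with $\abs{\bE}^{\alpha}=\abs{\bE}\,\abs{\bE}^{2/p}\le\norm{\bE}_\infty\,g^{1/p}$ yields $\abs{g^{\gamma}\bu}\le c\,g^{1/p}\abs{\bu}$ and $\abs{\nabla(g^{\gamma}\bu)}\le c\,g^{1/p}\abs{\nabla\bu}+c\,g^{1/p}\abs{\bu}$ on $\Omega'$; since $\norm{g^{1/p}\bu}_{L^p(\Omega)}=\norm{\bu}_{p,g}$ and $\norm{g^{1/p}\nabla\bu}_{L^p(\Omega)}=\norm{\nabla\bu}_{p,g}$ by \eqref{eq:mo-no}, this gives $\norm{g^{\gamma}\bu}_{W^{1,p}(\Omega')}\le c(\bE,\Omega',p)\,\big(\norm{\bu}_{p,g}+\norm{\nabla\bu}_{p,g}\big)=c(\bE,\Omega',p)\,\norm{\bu}_{X^{p,p}_{\nabla}(\Omega;g)}$. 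Since $\mathcal{V}^{p,p}_{\nabla,g}$ is dense in $X^{p,p}_{\nabla}(\Omega;g)$, $T$ extends uniquely to a bounded operator on the whole space; and because every $\bu\in X^{p,p}_{\nabla}(\Omega;g)$ is — via the isometry onto its first component — a function in $L^p(\Omega;g)$, hence defined a.e.\ on $\Omega_0=\{\abs{\bE}>0\}$ (a full-measure set by Assumption~\ref{VssE}), passing to an a.e.\ convergent subsequence of a smooth approximating sequence identifies the extension as $T\bu=\abs{\bE}^{\alpha}\bu$ a.e.

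It then remains to compose $T$ with a Sobolev embedding. By the constant-exponent form of \eqref{eq:mo-no}, $\norm{\bu}_{L^{r}(\Omega';\abs{\bE}^{\alpha r})}=\norm{\abs{\bE}^{\alpha}\bu}_{L^{r}(\Omega')}=\norm{T\bu}_{L^{r}(\Omega')}$, so the asserted embedding is $T$ followed by $W^{1,p}(\Omega')\hookrightarrow L^{r}(\Omega')$. For the bounded Lipschitz domain $\Omega'$ the Rellich--Kondrachov theorem makes this last embedding compact for all $r\in[1,p^*)$ when $p\le d$, while $W^{1,p}(\Omega')\hookrightarrow\hookrightarrow C^0(\overline{\Omega'})\hookrightarrow L^\infty(\Omega')$ handles $p>d$ (including $r=p^*=\infty$); the remaining borderline exponent $r=p^*$ for $p<d$ corresponds to the continuous Sobolev embedding $W^{1,p}(\Omega')\hookrightarrow L^{p^*}(\Omega')$. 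Composing with $T$ yields the claim on the full stated range of $r$, with compactness for every subcritical $r$ (and for all $r$ when $p>d$).

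I expect the only real difficulty to be the gradient estimate $\abs{\nabla(\abs{\bE}^{\alpha})}\le c\,\abs{\bE}^{2/p}$ on $\overline{\Omega'}$ — that is, the fact that multiplication by $\abs{\bE}^{\alpha}$ upgrades the merely $\abs{\bE}^{2/p}$-weighted gradient control built into $X^{p,p}_{\nabla}(\Omega;\abs{\bE}^2)$ to genuine $W^{1,p}(\Omega')$ control — together with the bookkeeping required to realize $T$ on the abstract completion; everything else is the classical Sobolev and Rellich--Kondrachov theory.
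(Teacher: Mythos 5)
Your argument is correct, but note that the paper itself contains no proof of Theorem \ref{compactnew}: it simply refers to \cite[Thm.~3.3]{kr-micro}, so there is no in-text argument to compare with line by line. Your route is the natural one and matches the strategy of the analogous weighted embeddings in this literature (cf.\ \cite{erw}): the entire role of the hypothesis $\alpha\ge 1+\frac{2}{p}$ is exactly the estimate you isolate, namely $\nabla(\abs{\bE}^2)=2(\nabla\bE)^\top\bE$ together with $\bE\in C^\infty(\Omega)\cap L^\infty(\Omega)$ gives $\abs{\nabla(\abs{\bE}^{\alpha})}\le c(\Omega')\,\abs{\bE}^{\alpha-1}\le c\,\abs{\bE}^{2/p}$ on $\overline{\Omega'}$, so that multiplication by $\abs{\bE}^{\alpha}$ is a bounded operator from $X^{p,p}_{\nabla}(\Omega;\abs{\bE}^2)$ into $W^{1,p}(\Omega')$, after which the classical Sobolev/Rellich--Kondrachov embeddings on the Lipschitz domain $\Omega'$ finish the proof. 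Your reduction to the borderline exponent $\alpha=1+\frac{2}{p}$ via $\norm{\bE}_\infty<\infty$, the regularization $(\abs{\bE}^2+\varepsilon)^{\gamma}$ to justify the chain rule where $\bE$ vanishes, and the identification of the extended operator on the abstract completion (legitimate because $\abs{\Omega\setminus\Omega_0}=0$, so elements of the completion are genuine measurable functions and a.e.\ limits of smooth approximants) are all sound.

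Two small remarks. First, at the borderline $r=p^*$ (for $p<d$) your construction yields only a continuous embedding; this is all the statement can assert and all that is used in the paper (the symbol in Theorem \ref{compactnew} is the same continuous-embedding arrow employed elsewhere, e.g.\ for $L^{p^-}(\Omega)$ into $L^{p^-}(\Omega;\abs{\bE}^2)$, and in the proof of Theorem \ref{thm:main4p(x)} the theorem is only invoked to transfer weak convergence of $(\w^n)_{n\in\mathbb{N}}$), so proving compactness only in the subcritical range is not a deficiency. Second, for $p>d$ and $r=p^*=\infty$ the weighted target has to be read as $L^\infty(\Omega')$, as you do; that is an ambiguity of the statement, not of your proof.
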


\begin{proof}
	See \cite[Thm.~3.3]{kr-micro}.
\end{proof}

\section{A weak stability lemma}\label{sec:stab}
The weak stability of problems of $p$--Laplace type is well-known
(cf.~\cite{dms}).~It~also holds for our problem \eqref{NS} if we make
appropriate natural assumptions on the extra stress tensor $\mathbf{S}$ and on the couple stress tensor
$\mathbf{N}$, which are motivated by the canonical example
in~\eqref{eq:SN-ex}.~We~denote the symmetric and the skew-symmetric part, resp., of a
tensor $\mathbf{A}\hspace*{-0.1em}\in\hspace*{-0.1em} \mathbb{R}^{d\times d}$ by ${\mathbf{A}^{\sym}\hspace*{-0.1em}:=\hspace*{-0.1em}\frac{1}{2}(\mathbf{A}\hspace*{-0.1em}+\hspace*{-0.1em}\mathbf{A}^\top)}$~and~${\mathbf{A}^{\anti}\hspace*{-0.1em}:=\hspace*{-0.1em}\frac{1}{2}(\mathbf{A}
	\hspace*{-0.1em}-\hspace*{-0.1em}\mathbf{A}^\top)}$. Moreover,~${\R^{d\times
		d}_{\sym}\hspace*{-0.1em}:=\hspace*{-0.1em}\{\mathbf{A}\hspace*{-0.1em}\in\hspace*{-0.1em} \R^{d\times d}\hspace*{-0.1em} \mid\hspace*{-0.1em}
	\mathbf{A}\hspace*{-0.1em}=\hspace*{-0.1em}\mathbf{A}^\sym\}}$ and $\R^{d\times
	d}_{\anti}\hspace*{-0.1em}:=\hspace*{-0.1em}\{\mathbf{A}\in \R^{d\times d}\hspace*{-0.1em}\mid\hspace*{-0.1em}
{\mathbf{A}\hspace*{-0.1em}=\hspace*{-0.1em}\mathbf{A}^\anti}\}$.
\begin{Vss}\label{VssSpx}
		For the extra stress tensor
		$\mathbf{S}:\R_{\sym}^{d\times d}\times \R_{\anti}^{d\times
		          d}\times\R^d\to \setR^{d}$  and some $\hat p \in \mathcal
	        P^{\log}(\setR)$ with $\hat p^- >1$, there
	        exist constants $c,C >0$ such that: 
		\begin{enumerate}
				\item[{\rm \hypertarget{(S.1)}{(S.1)}}] $\mathbf{S}\in C^0(\R_{\sym}^{d\times d}\times \R_{\anti}^{d\times d}\times \R^d;\setR^{d
						\times d})$.
				\item[{\rm \hypertarget{(S.2)}{(S.2)}}] For every $\bD \in \R_{\sym}^{d\times d}$,
				$\bR \in \R_{\anti}^{d\times d}$ and $\bE \in \setR^d$, it holds\\[-5mm]
				\begin{align*}
								\vert\mathbf{S}^{\sym}(\mathbf{D},\mathbf{R},\mathbf{E})\vert&\leq
								c\,\big (1+\vert\E\vert^2 \big) \big (1+\vert\mathbf{D}\vert^{\hat p(\abs{\bE}^2)-1}\big)\,,
								\\ 
								\vert\mathbf{S}^{\anti}(\mathbf{D},\mathbf{R},\mathbf{E})\vert&\leq
								c\,\vert \mathbf{E}\vert^2 \big (1+\vert\mathbf{R}\vert^{\hat p(\abs{\bE}^2)-1}\big)\,.
					\end{align*}\\[-7mm]
				
				\item[{\rm \hypertarget{(S.3)}{(S.3)}}]  For every $\bD \in \R_{\sym}^{d\times d}$,
				$\bR \in \R_{\anti}^{d\times d}$ and $\bE \in \setR^d$, it holds\\[-5mm]
				\begin{align*}
								\mathbf{S}(\mathbf{D},\mathbf{R},\mathbf{E}):\mathbf{D}
								&\geq  c\,\big (1+\vert\E\vert^2\big )\, \big( \vert\mathbf{D}\vert^{\hat p(\abs{\bE}^2)}-C\big)\,,
								\\
								\mathbf{S}(\mathbf{D},\mathbf{R},\mathbf{E}):\mathbf{R}&\geq
								c\,\vert \mathbf{E}\vert^2 \big( \vert\mathbf{R}\vert^{\hat p(\abs{\bE}^2)}-C\big)\,.
					\end{align*}\\[-7mm]
				
				\item[{\rm \hypertarget{(S.4)}{(S.4)}}]  For every $\bD_1, \bD_2 \in
				\R_{\sym}^{d\times d}$, $\bR_1, \bR_2 \in \R_{\anti}^{d\times d}$
				and $\bE \in \setR^d$ with $(\mathbf{D}_1,\vert
				\mathbf{E}\vert \mathbf{R}_1)\neq(\mathbf{D}_2,\vert
				\mathbf{E}\vert \mathbf{R}_2)$, it holds\\[-6mm]
				\begin{align*}
								\big (\mathbf{S}(&\mathbf{D}_1,\mathbf{R}_1,\mathbf{E})-
								\mathbf{S}(\mathbf{D}_2,\mathbf{R}_2,\mathbf{E})\big ):
								\big (\mathbf{D}_1-\mathbf{D}_2+\mathbf{R}_1-\mathbf{R}_2\big )>0\,.
					\end{align*}
			\end{enumerate}
	\end{Vss}
\begin{Vss}\label{VssNpx}
		For the couple stress tensor
		$\mathbf{N}:\R^{d\times d}\times \R^d\to\R^{d\times d}$ and some $\hat p \in \mathcal
	        P^{\log}(\setR)$ with $ \hat p^- >1$, there
	        exist constants $c, C >0$ such that:  
		\begin{enumerate}
				\item[{\rm \hypertarget{(N.1)}{(N.1)}}]$\mathbf{N}\in C^0(\R^{d\times d}\times \R^d;\R^{d\times d})$.
				
				\item[{\rm \hypertarget{(N.2)}{(N.2)}}]  For every $\bL \in \R^{d\times d}$ and $\bE \in
				\setR^d$, it holds\\[-6mm]
				\begin{align*}
						\vert\mathbf{N}(\mathbf{L},\mathbf{E})\vert\leq c\,\big
						\vert\mathbf{E}\vert^2
						\big(1+\vert\mathbf{L}\vert^{\hat p(\abs{\bE}^2)-1}\big )\,.
					\end{align*}\\[-10mm]
				
				\item[{\rm \hypertarget{(N.3)}{(N.3)}}] For every $\bL \in \R^{d\times d}$ and $\bE \in
				\setR^d$, it holds\\[-6mm]
				\begin{align*}
						\mathbf{N}(\mathbf{L},\mathbf{E}):\mathbf{L}\geq
						c\,\big \vert\mathbf{E}\vert^2
						\big(\vert\mathbf{L}\vert^{\hat p(\abs{\bE}^2)} -C\big)\,.
					\end{align*}\\[-10mm]
		
		              \item[{\rm \hypertarget{(N.4)}{(N.4)}}]  For every $\bL_1,\bL_2 \in \R^{d\times d}$ and $\bE \in
				\setR^d$ with $\vert \mathbf{E}\vert>0$ and
				$\mathbf{L}_1\neq \mathbf{L}_2$, it holds\\[-6mm]
				\begin{align*}
						(\mathbf{N}(\mathbf{L}_1,\mathbf{E})-\mathbf{N}
						(\mathbf{L}_2,\mathbf{E})):(\mathbf{L}_1-\mathbf{L}_2)>0\,.
					\end{align*}
			\end{enumerate}
	\end{Vss}

\begin{Bem}\label{locallog}
  Let Assumption~\ref{VssE}, Assumption~\ref{VssSpx} and
  Assumption~\ref{VssNpx}
  be satisfied. Since
  $\vert\bfE\vert^2\in W^{1,\infty}(\Omega')$ for each
  $\Omega'\subset\subset \Omega$, which follows from
  ${\bfE\in C^\infty(\Omega)}$, we have that the variable exponent 
  $$
  p:=\hat p\circ \vert\bfE\vert^2 
  $$
  satisfies $p \in \mathcal{P}^{\infty}(\Omega)\cap C^0(\Omega)$ and $p|_{\Omega'}\in \mathcal{P}^{\log}(\Omega')$ for each
  $\Omega'\subset\subset \Omega$.
\end{Bem}

\begin{Lem}\label{symgradp(x)}
	Let Assumption~\ref{VssE}, 
	Assumption~\ref{VssSpx} and Assumption~\ref{VssNpx} be satisfied with ${p^-\ge \frac{2d}{d+2}}$.  Then, we have that 
	$\smash{\XDpo\hookrightarrow W^{1,p(\cdot)}_{\loc}(\Omega)}$.
\end{Lem}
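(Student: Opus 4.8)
The plan is to deduce the statement from a local application of Korn's inequality (Theorem~\ref{kornp(x)}) together with the local embedding $\XDp\hookrightarrow L^{p(\cdot)}_{\loc}(\Omega)$ of Theorem~\ref{poincarep(x)}. Fix $\bfu\in\XDpo$; by definition there is a sequence $(\bvarphi_n)_{n\in\Nat}\subseteq C_0^\infty(\Omega)$ with $\|\bvarphi_n-\bfu\|_{p^-}+\|\bD\bvarphi_n-\bD\bfu\|_{p(\cdot)}\to 0$ as $n\to\infty$, so that $(\bvarphi_n)_{n\in\Nat}$ is in particular a Cauchy sequence in $\XDp$. Given an arbitrary open set $\Omega'\subset\subset\Omega$, I would first pick a bounded Lipschitz domain $\Omega''$ with $\Omega'\subset\subset\Omega''\subset\subset\Omega$. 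By Remark~\ref{locallog} we then have $p|_{\Omega''}\in\mathcal{P}^{\log}(\Omega'')$, and $(p|_{\Omega''})^-\ge p^-\ge\hat p^->1$, so Korn's inequality is available on $\Omega''$; moreover $p\in C^0(\Omega)$ and $p^-\ge\frac{2d}{d+2}$, so Theorem~\ref{poincarep(x)} applies with this $\Omega''$.

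The key step is to show that $(\bvarphi_n)_{n\in\Nat}$ is a Cauchy sequence in $W^{1,p(\cdot)}(\Omega'')$. Applying Theorem~\ref{kornp(x)} on $\Omega''$ to $\bvarphi_n-\bvarphi_m$ gives
\[
\|\nabla\bvarphi_n-\nabla\bvarphi_m\|_{L^{p(\cdot)}(\Omega'')}\le c\,\big(\|\bD\bvarphi_n-\bD\bvarphi_m\|_{L^{p(\cdot)}(\Omega'')}+\|\bvarphi_n-\bvarphi_m\|_{L^{p(\cdot)}(\Omega'')}\big).
\]
Here the first term on the right is bounded by $\|\bD\bvarphi_n-\bD\bvarphi_m\|_{p(\cdot)}$, while the second term is bounded, via Theorem~\ref{poincarep(x)}, by $c(p,\Omega'')\,\|\bvarphi_n-\bvarphi_m\|_{\XDp}$. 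Hence each of $\|\bvarphi_n-\bvarphi_m\|_{L^{p(\cdot)}(\Omega'')}$, $\|\bD\bvarphi_n-\bD\bvarphi_m\|_{L^{p(\cdot)}(\Omega'')}$ and $\|\nabla\bvarphi_n-\nabla\bvarphi_m\|_{L^{p(\cdot)}(\Omega'')}$ is controlled by $c\,\|\bvarphi_n-\bvarphi_m\|_{\XDp}\to 0$, and since $W^{1,p(\cdot)}(\Omega'')$ is complete, $\bvarphi_n\to\bfw$ in $W^{1,p(\cdot)}(\Omega'')$ for some $\bfw$.

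It remains to identify $\bfw$ with $\bfu$ on $\Omega''$ and to pass to the limit. Since $\bvarphi_n\to\bfu$ in $\XDp$, Theorem~\ref{poincarep(x)} yields $\bvarphi_n\to\bfu$ in $L^{p(\cdot)}(\Omega'')$, while $\bvarphi_n\to\bfw$ in $W^{1,p(\cdot)}(\Omega'')\hookrightarrow L^{p(\cdot)}(\Omega'')$; by uniqueness of limits $\bfu=\bfw$ a.e.\ in $\Omega''$, so $\bfu|_{\Omega''}$ has distributional gradient $\nabla\bfu\in L^{p(\cdot)}(\Omega'')$, i.e.\ $\bfu\in W^{1,p(\cdot)}(\Omega'')$. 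Running the estimates above with $\bvarphi_n$ in place of $\bvarphi_n-\bvarphi_m$ and letting $n\to\infty$ gives $\|\bfu\|_{W^{1,p(\cdot)}(\Omega')}\le\|\bfu\|_{W^{1,p(\cdot)}(\Omega'')}\le c\,\|\bfu\|_{\XDp}$. As $\Omega'\subset\subset\Omega$ was arbitrary, this proves $\XDpo\hookrightarrow W^{1,p(\cdot)}_{\loc}(\Omega)$.

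The one point requiring care is that the approximating functions $\bvarphi_n$, although compactly supported in $\Omega$, do not have vanishing trace on the auxiliary Lipschitz subdomain $\Omega''$, so one must invoke the \emph{inhomogeneous} Korn inequality of Theorem~\ref{kornp(x)}; the lower-order term it produces cannot be absorbed through a global Poincar\'e inequality — which is exactly what is unavailable here, and precisely the reason the space $\XDpo$ carries only the weaker integrability $p^-$ for the function itself — but it can be controlled by the local embedding of Theorem~\ref{poincarep(x)}. Everything else is a routine completion-and-limit argument.
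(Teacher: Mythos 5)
Your proof is correct and follows essentially the same route as the paper's: both hinge on combining the local embedding $\XDp\hookrightarrow L^{p(\cdot)}_{\loc}(\Omega)$ of Theorem~\ref{poincarep(x)} with the inhomogeneous Korn inequality of Theorem~\ref{kornp(x)} on a compactly contained Lipschitz subdomain, made available by the local $\log$--H\"older continuity from Remark~\ref{locallog}. The only (cosmetic) difference is the limit passage: you show the approximating sequence is Cauchy in $W^{1,p(\cdot)}(\Omega'')$ and use completeness, whereas the paper shows it is bounded in $W^{1,p(\cdot)}(\Omega')$ and extracts a weakly convergent subsequence by reflexivity — both then identify the limit with $\bfu$ via the strong $L^{p(\cdot)}$ convergence supplied by Theorem~\ref{poincarep(x)}. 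Your closing remark about why the lower-order Korn term must be controlled locally, rather than absorbed via Poincar\'e, correctly identifies the structural point of the lemma.
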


\begin{proof}
	Let $\Omega'\subset\subset \Omega$ be arbitrary. Without loss
        of generality, we may assume that $\partial \Omega'\in
        C^{0,1}$. Otherwise, we switch to some $\Omega''$ such that
        ${\Omega'\subset\subset\Omega''\subset\subset \Omega}$ with ${\partial \Omega''\in C^{0,1}}$.
	 Since $\smash{p\hspace*{-0.1em}\in\hspace*{-0.1em} C^0(\Omega)}$ (cf. Remark \ref{locallog}), Theorem~\ref{poincarep(x)} implies~that~any ${\bfu \in \smash{\XDpo}}$ satisfies $\bfu\in L^{p(\cdot)}(\Omega')$ with
	\begin{align}
		\|\bfu\|_{L^{p(\cdot)}(\Omega')}\leq c(p,\Omega')\|\bfu\|_{\smash{\XDpo}}\label{symgradp(x).1}
	\end{align}
	for some constant $c(p,\Omega')\!>\!0$. Next, let
        $(\bfu_n)_{n\in \mathbb{N}}\!\subseteq
        \! \smash{\mathcal{V}^{p^-,p(\cdot)}_{\bD}}$~be~a~sequence~such~that
        $\bfu_n\to \bfu $ in $\smash{\XDpo}$ $(n\to\infty)$. Then,
        \eqref{symgradp(x).1} gives us that $\bfu_n\to \bfu $ in
        $L^{p(\cdot)}(\Omega')$ $(n\to\infty)$. In addition, since
        $\smash{p|_{\Omega'}\in \mathcal{P}^{\log}(\Omega')}$
        (cf.~Remark \ref{locallog}), Korn's inequality (cf.~Theorem
        \ref{kornp(x)} and
        $\smash{\XDpo}\vnor W^{1,p(\cdot)}(\Omega')$)  and
        \eqref{symgradp(x).1} yield 
	\begin{align}
             \|\nabla\bfu_n\|_{L^{p(\cdot)}(\Omega')}+\|\bfu_n\|_{L^{p(\cdot)}(\Omega')} &\leq
            c(p,\Omega')\big(\|\bfD\bfu_n\|_{L^{p(\cdot)}(\Omega')}
            +\|\bfu_n\|_{L^{p(\cdot)}(\Omega')}\big) \notag 
            \\
            &\le c(p,\Omega')\|\bfu_n\|_{\smash{\XDpo}}\,.
              \label{symgradp(x).2} 
	\end{align}
	Thus, 
        we observe that $(\bfu_n)_{n\in \mathbb{N}}\subseteq \smash{\mathcal{V}^{p^-,p(\cdot)}_{\bD}}$ is bounded in $W^{1,p(\cdot)}(\Omega')$. This 
	implies the existence of a vector field $\tilde\bfu\in W^{1,p(\cdot)}(\Omega')$ and of a not relabeled subsequence such that $\bfu_n\rightharpoonup \tilde\bfu$ in $W^{1,p(\cdot)}(\Omega')$ $(n\to \infty)$. Due to the uniqueness of~weak~limits, we conclude that $\bfu=\tilde\bfu$ in $W^{1,p(\cdot)}(\Omega')$ and by taking the limit inferior in \eqref{symgradp(x).2} that $\|\bfu\|_{W^{1,p(\cdot)}(\Omega')}\leq c(p,\Omega')\|\bfu\|_{\smash{\XDpo}}$.
\end{proof}

\begin{Lem}\label{hatgradp(x)}
	Let Assumption~\ref{VssE}, 
	Assumption~\ref{VssSpx} and Assumption~\ref{VssNpx} be satisfied with ${p^-\ge\frac{2d}{d+2}}$. Then, we have that
	$\smash{\XGpEo\hookrightarrow W^{1,p(\cdot)}_{\loc}(\Omega_0)}$\footnote{Recall that $\Omega_0:=\{x\in \Omega\fdg \vert \bfE(x)\vert>0\}$ (cf. Assumption \ref{VssE}).}
	and
	$\smash{\nabla (\bw|_{\Omega'})=(\hat\nabla \bw)|_{\Omega'}}$ for all ${\bw\in \smash{\XGpEo}}$ and $\Omega'\subset\subset\Omega_0$. 
\end{Lem}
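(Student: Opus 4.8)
The plan is to follow the proof of Lemma~\ref{symgradp(x)}, the new ingredient being a localization that renders the weight harmless. It suffices to show, for every $\bw\in\XGpEo$ and every open set $\Omega'\subset\subset\Omega_0$, that $\bw|_{\Omega'}\in W^{1,p(\cdot)}(\Omega')$ with $\nabla(\bw|_{\Omega'})=(\hat\nabla\bw)|_{\Omega'}$ and $\|\bw|_{\Omega'}\|_{1,p(\cdot)}\le c(p,\Omega')\,\|\bw\|_{\XGpEo}$. Fix such an $\Omega'$ and choose a bounded Lipschitz domain $\Omega''$ with $\Omega'\subset\subset\Omega''\subset\subset\Omega_0$. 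Since $\bE\in C^\infty(\Omega)$ and $\overline{\Omega''}\subseteq\Omega_0$ is compact, there are constants $0<c_0\le c_1<\infty$ with $c_0\le|\bE(x)|^2\le c_1$ on $\overline{\Omega''}$; hence, by the elementary computation already used in Section~\ref{veroeffentlichung1} (writing $\|u\|_{r(\cdot),\sigma}=\|u\sigma^{1/r(\cdot)}\|_{r(\cdot)}$ with $\sigma=|\bE|^2$ and using that $p\in\mathcal{P}^{\infty}(\Omega)$), the norms of $L^{p^-}(\Omega'')$ and $L^{p^-}(\Omega'';|\bE|^2)$, as well as those of $L^{p(\cdot)}(\Omega'')$ and $L^{p(\cdot)}(\Omega'';|\bE|^2)$, are equivalent.

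Now let $(\bvarphi_n)_{n\in\mathbb{N}}\subseteq C_0^\infty(\Omega)$ satisfy $\bvarphi_n\to\bw$ in $L^{p^-}(\Omega;|\bE|^2)$ and $\nabla\bvarphi_n\to\hat\nabla\bw$ in $L^{p(\cdot)}(\Omega;|\bE|^2)$ as $n\to\infty$; such a sequence exists by the definition of $\XGpEo$ and of $\hat\nabla\bw$ (note that $|\bE|^2\in C^\infty(\Omega)$, so Assumption~\ref{weightp(x)} holds, whence $\hat\nabla\bw$ is well defined). Restricting to $\Omega''$ and combining the norm equivalences above with the monotonicity of the $L^{p(\cdot)}$-norm under restriction, we obtain $\bvarphi_n\to\bw$ in $L^{p^-}(\Omega'')$ and $\nabla\bvarphi_n\to\hat\nabla\bw$ in $L^{p(\cdot)}(\Omega'')$; in particular $(\bvarphi_n)$ is Cauchy in $W^{1,p^-}(\Omega'')$, and $(\nabla\bvarphi_n)$ is Cauchy in $L^{p(\cdot)}(\Omega'')$.

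The crux — and the only genuinely nontrivial step — is to upgrade $\bvarphi_n\to\bw$ from $L^{p^-}$ to $L^{p(\cdot)}$ on $\Omega'$; this is where Theorem~\ref{poincarep(x)} enters, and the main obstacle is that $\XGpEo$ controls $\bw$ only in $L^{p^-}(\Omega)$ with $p^-=\operatorname{ess\,inf}_{\Omega}p$, which may be strictly smaller than the infimum $(p|_{\Omega''})^-$ of $p$ over $\Omega''$, so Theorem~\ref{poincarep(x)} is not literally applicable with the domain $\Omega''$. This gap is bridged by a finite Sobolev bootstrap on the Lipschitz domain $\Omega''$: starting from $(\bvarphi_n)$ Cauchy in $W^{1,p^-}(\Omega'')$, Sobolev's embedding gives $(\bvarphi_n)$ Cauchy in $L^{(p^-)^*}(\Omega'')$ (with $(p^-)^*$ the Sobolev exponent of $p^-$, and $(p^-)^*$ arbitrarily large once $p^-\ge d$); since $(\nabla\bvarphi_n)$ is Cauchy in $L^{p(\cdot)}(\Omega'')\hookrightarrow L^{s}(\Omega'')$ for every $s\le (p|_{\Omega''})^-$, one iterates "Cauchy in $W^{1,s}(\Omega'')\Rightarrow$ Cauchy in $L^{s^*}(\Omega'')$", and because the integrability exponent grows by at least $(p^-)^2/d$ at each step, after finitely many steps $(\bvarphi_n)$ is Cauchy in $L^{(p|_{\Omega''})^-}(\Omega'')$. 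Hence $\bw|_{\Omega''}\in X^{(p|_{\Omega''})^-,p(\cdot)}_{\mathbf{D}}(\Omega'')$, and since $p|_{\Omega''}\in C^0(\Omega'')$ (Remark~\ref{locallog}) with $(p|_{\Omega''})^-\ge p^-\ge\frac{2d}{d+2}$, Theorem~\ref{poincarep(x)} (applied with $\Omega''$ in place of $\Omega$ and $\Omega'\subset\subset\Omega''$) yields $\bw\in L^{p(\cdot)}(\Omega')$, $\bvarphi_n\to\bw$ in $L^{p(\cdot)}(\Omega')$, and $\|\bw\|_{L^{p(\cdot)}(\Omega')}\le c\,\|\bw\|_{\XGpEo}$.

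Finally, combining $\bvarphi_n\to\bw$ in $L^{p(\cdot)}(\Omega')$ with $\nabla\bvarphi_n\to\hat\nabla\bw$ in $L^{p(\cdot)}(\Omega'')\hookrightarrow L^{p(\cdot)}(\Omega')$ shows $\bvarphi_n\to\bw$ in $W^{1,p(\cdot)}(\Omega')$. By completeness of $W^{1,p(\cdot)}(\Omega')$ we conclude that $\bw|_{\Omega'}\in W^{1,p(\cdot)}(\Omega')$ with distributional gradient $\nabla(\bw|_{\Omega'})=(\hat\nabla\bw)|_{\Omega'}$, and passing to the limit in the norm estimates gives $\|\bw|_{\Omega'}\|_{1,p(\cdot)}\le c(p,\Omega',\Omega'')\,\|\bw\|_{\XGpEo}$. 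Since $\Omega'\subset\subset\Omega_0$ was arbitrary, both assertions of the lemma follow. Everything apart from the exponent mismatch addressed in the previous paragraph is a routine localization combined with the results of Section~\ref{veroeffentlichung1}; in particular, in contrast with Lemma~\ref{symgradp(x)}, no Korn inequality and no weak compactness argument are needed here, because $\XGpEo$ already controls the full gradient rather than only its symmetric part.
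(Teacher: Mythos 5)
Your proposal is correct and follows the same skeleton as the paper: pick $\Omega'\subset\subset\Omega''\subset\subset\Omega_0$, use $c(\Omega'')\le|\bE|^2$ on $\overline{\Omega''}$ to drop the weight (the paper's inequality \eqref{hatgradp(x).1}), invoke Theorem~\ref{poincarep(x)} on $\Omega''$ to upgrade to $L^{p(\cdot)}(\Omega')$ (the paper's \eqref{hatgradp(x).2}--\eqref{hatgradp(x).3}), and finish by density of $C^\infty_0(\Omega)$ in $\XGpEo$ plus completeness of $W^{1,p(\cdot)}(\Omega')$. The genuine difference is the bootstrap. You correctly observe that, read verbatim, Theorem~\ref{poincarep(x)} applied with ambient domain $\Omega''$ yields $\|\bw\|_{L^{p(\cdot)}(\Omega')}\le c\,\|\bw\|_{X^{(p|_{\Omega''})^-,p(\cdot)}_{\mathbf{D}}(\Omega'')}$, and this does \emph{not} formally imply the estimate with $X^{p^-,p(\cdot)}_{\mathbf{D}}(\Omega'')$ (which has a \emph{weaker} norm) when $(p|_{\Omega''})^->p^-$; your finite Sobolev bootstrap on the Lipschitz domain $\Omega''$ closes that gap cleanly and the iteration does terminate, since the exponent increases by at least $\smash{(p^-)^2/d}>0$ per step. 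The paper instead writes \eqref{hatgradp(x).2} directly with the global $p^-$, tacitly relying on the fact that the \emph{proof} of Theorem~\ref{poincarep(x)} (Appendix~\ref{sec:app}) only uses $q\ge\frac{2d}{d+2}$ in the Korn--Sobolev step $X^{q,q}_{\mathbf{D}}(\Omega'')\hookrightarrow L^2(\Omega'')$, so the conclusion really holds for any constant exponent $q\in[\frac{2d}{d+2},\,\ess\inf_{\Omega''}p]$ in the first slot, in particular $q=p^-$. So both routes are sound: the paper's is shorter but leans on an implicit strengthening of Theorem~\ref{poincarep(x)}, whereas your bootstrap is more self-contained at the cost of extra iterations. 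Your remark that, unlike Lemma~\ref{symgradp(x)}, neither the variable-exponent Korn inequality (Theorem~\ref{kornp(x)}) nor a weak compactness argument is needed here --- because $\XGpEo$ already controls the full gradient, so a Cauchy/density argument suffices --- matches the paper's proof exactly.
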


\begin{proof}
  Let $\Omega'\subset\subset\Omega_0$ be arbitrary and fix some
  $\Omega''\subset\subset\Omega_0$ such that
  $\Omega'\subset\subset\Omega''$.
  Due~to~${\vert \bE\vert >0 }$~in~$\overline{\Omega''}$ and
  ${\vert \bE\vert\in C^0(\overline{\Omega''})}$, there is a constant
  $c(\Omega'')>0$ such that
  $c(\Omega'')\leq \vert \bE\vert^2$~in~$\overline{\Omega''}$. Thus,
  for every $\bw \in C_0^\infty(\Omega)$, H\"older's inequality in
  variable Lebesgue spaces and \eqref{eq:mo-no} imply  
	\begin{align}
		\|\bw\|_{{X^{p^-,p(\cdot)}_{{\nabla}}(\Omega'')}}\leq
          2\,\smash{c(\Omega'')^{\frac{-1}{p^-}}
			\|\bw\|_{{\XGpEo}}\,.}\label{hatgradp(x).1} 
	\end{align}
	Furthermore,
	since $p|_{\Omega''}\in C^0(\Omega'')$ (cf.~Remark \ref{locallog}) and $p^-\ge\frac{2d}{d+2}$, Theorem~\ref{poincarep(x)} implies that every $\bw \in C_0^\infty(\Omega)$ satisfies
	\begin{align}
		\|\bw\|_{L^{p(\cdot)}(\Omega')}\leq  c(p,\Omega',\Omega'')\|\bw\|_{{X^{p^-,p(\cdot)}_{{\bD}}(\Omega'')}}\leq  c(p,\Omega',\Omega'')\|\bw\|_{{X^{p^-,p(\cdot)}_{{\nabla}}(\Omega'')}}\label{hatgradp(x).2}
	\end{align}
	for some constant $c(p,\Omega',\Omega'')>0$. Combining \eqref{hatgradp(x).1} and \eqref{hatgradp(x).2}, we find that for every $\bw \in C_0^\infty(\Omega)$, it holds
	\begin{align}
		\|\bw\|_{W^{1,p(\cdot)}(\Omega')}\leq c(p,\Omega',\Omega'')\|\bw\|_{{\XGpEo}}\label{hatgradp(x).3}
	\end{align}
	for some constant $c(p,\Omega',\Omega'')\!>\!0$. Since 
	$\smash{\XGpEo}$ is  the closure of~$\smash{ C_0^\infty(\Omega)}$, and $C^{\infty}(\overline
        {\Omega'})$ is dense in $W^{1,p(\cdot)}(\Omega')$
        (cf.~\cite[Thm.~9.1.7]{lpx-book}) since $p|_{\Omega''}\in
        \mathcal P^{\log}(\Omega'')$ (cf.~Remark \ref{locallog}),~\eqref{hatgradp(x).3} implies 
	$\smash{\XGpEo\hspace*{-0.2em}\hookrightarrow\hspace*{-0.2em} W^{1,p(\cdot)}(\Omega')}$
	and $\smash{\nabla}(\bw|_{\Omega'})\hspace*{-0.2em}=\hspace*{-0.2em}(\hat{\nabla} \bw)|_{\Omega'}$ for every
	${\bw\hspace*{-0.2em}\in \hspace*{-0.2em}\smash{\XGpEo}}$.
\end{proof}

Now we can formulate  the following weak stability property for problem~\eqref{NS}.

\begin{Lem}\label{DalMaso3p(x)}
  Let $\Omega\subseteq\R^d$, $d\ge 2$, be a bounded domain and let
  \mbox{Assumption~\ref{VssE}}, Assumption~\ref{VssSpx} and
  Assumption~\ref{VssNpx} be satisfied with
  ${p^->\frac{2d}{d+2}}$. Moreover, let~$\smash{(\bv^n)_{n\in \mathbb{N}}\subseteq \VDpo}$ and
  $\smash{(\w^n)_{n\in \mathbb{N}}\subseteq \XGpEo}$ be~such~that \linebreak
  $(\bfR(\bfv^n,\w^n))_{n\in \mathbb{N}}\subseteq L^{p(\cdot)}(\Omega;\vert \bfE\vert^2)$ is bounded and
  \begin{align}
    \begin{aligned}\label{stab-konv}
      \bv^n&\rightharpoonup\bv\quad& &\text{in}\
      \VDpo &&\quad(n\to \infty)\,,
      \\
      \w^n&\rightharpoonup\w\quad& &\text{in}\ \XGpEo&&\quad(n\to \infty) \,.
    \end{aligned}
  \end{align}      
  For every ball ${B\subset\subset \Omega_0}$ such that
  ${B':=2B\subset \subset \Omega_0}$ and $\tau\in C_0^\infty(B')$
  satisfying ${\chi_B\leq \tau\le\chi_{B'}}$, we set
  ${ \mathbf{u}^n:=(\mathbf{v}^n-\mathbf{v})\tau}$,
  ${\bfpsi^n:=(\boldsymbol\omega^n-\boldsymbol\omega)\tau} \in
  W^{1,p(\cdot)}_0(B')$, $n\in \mathbb{N}$.  Let
  $ \mathbf{u}^{n,j}\in W^{1,\infty}_0(B')$, $n,j\in \mathbb{N}$, and
  $\bfpsi^{n,j}\in W^{1,\infty}_0(B')$, $n,j\in \mathbb{N}$, resp.,
  denote the Lipschitz truncations constructed according to
  Theorem~\ref{thm:Ltp(x)}.~Furthermore, assume that for every
  ${j\in \mathbb{N}}$, we have that
	\begin{align} 
		\limsup_{n\to\infty}\big\vert\big\langle&\Sn-\Ss,
		\D\bu^{n,j}+\Rr(\bu^{n,j},\bfpsi^{n,j})\big\rangle \notag
		\\[-1mm]
		&\quad + \big\langle\Nn-\N,\nabla\bfpsi^{n,j}\big\rangle
		\big \vert \le
		\delta_j\,, \label{mon4}
	\end{align}
	where $\delta_j\to 0$ $(j\to 0)$. \!Then, \!one has $
	\nabla\bv^n\to\nabla\bv$ a.e.~in $B$~${(n\to \infty)}$,~${\nabla\w^n\to\nabla\w}$ a.e.~in $B$ $(n\to \infty)$ and $\w^n\to\w$ a.e.~in $B$ $(n\to \infty)$~for~a~suitable~subsequence.
\end{Lem}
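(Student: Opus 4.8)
The plan is to localise the assertion to the ball $B$ by means of the cut-off $\tau$, to replace the weighted spaces by ordinary variable exponent Sobolev spaces on $B'$, and then to run the Dal Maso--Murat/Lipschitz-truncation monotonicity scheme (cf.~\cite{dms}). First I would note that, since $B'\subset\subset\Omega_0$, Remark~\ref{locallog} gives $p|_{B'}\in\mathcal{P}^{\log}(B')$ and $0<c(B')\le|\E|^2\le\|\E\|_\infty^2$ on $\overline{B'}$; hence Lemma~\ref{symgradp(x)}, Lemma~\ref{hatgradp(x)} and the continuity of the involved embeddings into $W^{1,p(\cdot)}(B')$ upgrade \eqref{stab-konv} to $\bv^n\rightharpoonup\bv$ and $\w^n\rightharpoonup\w$ in $W^{1,p(\cdot)}(B')$. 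In particular $\mathbf{u}^n\rightharpoonup\mathbf{0}$ and $\bfpsi^n\rightharpoonup\mathbf{0}$ in $W^{1,p(\cdot)}_0(B')$, so Theorem~\ref{thm:Ltp(x)} is applicable. By Theorem~\ref{rellichp(x)}, after passing to a subsequence, $\bv^n\to\bv$ and $\w^n\to\w$ strongly in $L^{p(\cdot)}(B')$ and a.e.~in $B'$, which already establishes the asserted $\w^n\to\w$ a.e.~in $B$. Using the growth bounds in Assumption~\ref{VssSpx} and Assumption~\ref{VssNpx}, the boundedness of $\bE$, and the pointwise identity $(\hat p(|\E|^2)-1)\,\hat p(|\E|^2)'=\hat p(|\E|^2)$, the stress differences $\Sn-\Ss$ and $\Nn-\N$ are bounded in $L^{p'(\cdot)}(B')$, while $\D\bv^n$, $(\nabla\bv^n)^{\anti}=\Rr(\bv^n,\w^n)-\bfvarepsilon:\w^n$ and $\nabla\w^n$ are bounded in $L^{p(\cdot)}(B')$.

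Next I would set $G^n:=(\Sn-\Ss):(\D(\bv^n-\bv)+\Rr(\bv^n-\bv,\w^n-\w))+(\Nn-\N):\nabla(\w^n-\w)$. Applying (S.4) to the pair $(\D\bv^n,\Rr(\bv^n,\w^n))$ against $(\D\bv,\Rr(\bv,\w))$ and (N.4) to $\nabla\w^n$ against $\nabla\w$, and using $|\E|>0$ on $B'\subseteq\Omega_0$, one has $G^n\ge0$ a.e.~in $B'$; the core of the proof is to show $\int_B(G^n)^{\vartheta}\,dx\to0$ for a fixed $\vartheta\in(0,1)$, which along a further subsequence yields $G^n\to0$ a.e.~in $B$. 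Fix $j$ and let $\mathcal{G}_{n,j}:=\{\mathbf{u}^{n,j}=\mathbf{u}^n\}\cap\{\bfpsi^{n,j}=\bfpsi^n\}$. On $B\cap\mathcal{G}_{n,j}$, where $\tau\equiv1$ and the distributional gradients of $\mathbf{u}^{n,j},\bfpsi^{n,j}$ agree a.e.~with those of $\mathbf{u}^n,\bfpsi^n$, linearity of $\D,\Rr,\nabla$ shows that $G^n$ equals there the integrand $h^{n,j}:=(\Sn-\Ss):(\D\mathbf{u}^{n,j}+\Rr(\mathbf{u}^{n,j},\bfpsi^{n,j}))+(\Nn-\N):\nabla\bfpsi^{n,j}$ of \eqref{mon4}. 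Using $\chi_B\le\tau\le\tau^{\vartheta}$ I estimate $\int_B(G^n)^{\vartheta}\le\int_{B'}(\tau G^n)^{\vartheta}$ and split over $\mathcal{G}_{n,j}$ and its complement. On the bad set, Hölder's inequality with exponents $1/\vartheta,1/(1-\vartheta)$ and the uniform bound $M:=\sup_n\int_{B'}G^n<\infty$ give $\int_{B'\setminus\mathcal{G}_{n,j}}(\tau G^n)^{\vartheta}\le|B'\setminus\mathcal{G}_{n,j}|^{1-\vartheta}M^{\vartheta}$, and the last estimate in \eqref{eq:C18px} together with $\lambda_{n,j}\ge2^{2^j}$ and Lemma~\ref{lem:unit_ball_px} give $\limsup_n|B'\setminus\mathcal{G}_{n,j}|\to0$ as $j\to\infty$. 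On $\mathcal{G}_{n,j}$, expanding $\D\mathbf{u}^n$, $\Rr(\mathbf{u}^n,\bfpsi^n)$, $\nabla\bfpsi^n$ with the Leibniz rule shows $h^{n,j}=\tau G^n+E^n$ with $E^n:=(\Sn-\Ss):((\bv^n-\bv)\otimes\nabla\tau)+(\Nn-\N):((\w^n-\w)\otimes\nabla\tau)$, and $\int_{B'}|E^n|\to0$ by the $L^{p'(\cdot)}(B')$-bounds on the stresses and the strong $L^{p(\cdot)}(B')$-convergence of $\bv^n,\w^n$. Hence $0\le\tau G^n=h^{n,j}-E^n$ on $\mathcal{G}_{n,j}$, and Hölder's inequality once more gives $\int_{\mathcal{G}_{n,j}}(\tau G^n)^{\vartheta}\le|B'|^{1-\vartheta}\big(\int_{B'}h^{n,j}-\int_{B'\setminus\mathcal{G}_{n,j}}h^{n,j}-\int_{\mathcal{G}_{n,j}}E^n\big)^{\vartheta}$. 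In the $\limsup_n$ the first term is $\le\delta_j$ by \eqref{mon4}, the third is $o(1)$, and for the second one uses $|\D\mathbf{u}^{n,j}|,|\nabla\bfpsi^{n,j}|\le c\,\lambda_{n,j}$ (and $|\bfvarepsilon:\bfpsi^{n,j}|\le c\,\|\bfpsi^{n,j}\|_\infty\to0$) supported in $\{\mathbf{u}^{n,j}\neq\mathbf{u}^n\}\cup\{\bfpsi^{n,j}\neq\bfpsi^n\}$, together with Hölder in the variable exponent spaces and \eqref{eq:C18px}, to obtain $\limsup_n\int_{B'\setminus\mathcal{G}_{n,j}}|h^{n,j}|\le c\,2^{-j/p^+}$. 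Collecting these bounds and letting $j\to\infty$ proves $\int_B(G^n)^{\vartheta}\,dx\to0$.

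It then remains to pass from $G^n\to0$ a.e.~in $B$ to the pointwise convergence of the gradients. Writing $G^n=G^n_S+G^n_N$ with both summands nonnegative a.e., both tend to $0$ a.e.~in $B$. At a.e.~$x\in B$ (where $|\E(x)|>0$ and $\D\bv(x),\nabla\w(x)$ are finite) I argue by dichotomy: if $\nabla\w^n(x)$, resp.~$(\D\bv^n(x),|\E(x)|\Rr(\bv^n,\w^n)(x))$, stays bounded along a subsequence, continuity (N.1), resp.~(S.1), and strict monotonicity (N.4), resp.~(S.4), force convergence to $\nabla\w(x)$, resp.~to $(\D\bv(x),|\E(x)|\Rr(\bv,\w)(x))$; if instead it blows up along a subsequence, the coercivity (N.3), resp.~(S.3), weighed against the $(\hat p(|\E|^2)-1)$-growth of (N.2), resp.~(S.2), forces $G^n_N(x)$, resp.~$G^n_S(x)$, to $+\infty$, contradicting $G^n\to0$. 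Since every subsequence has a further subsequence converging to the same limit, the full sequences converge: $\nabla\w^n\to\nabla\w$ a.e.~in $B$ and, since $|\E|>0$ there, $\Rr(\bv^n,\w^n)\to\Rr(\bv,\w)$ a.e.~in $B$. Combining the latter with the already established $\w^n\to\w$ a.e.~(hence $\bfvarepsilon:\w^n\to\bfvarepsilon:\w$ a.e.) gives $(\nabla\bv^n)^{\anti}\to(\nabla\bv)^{\anti}$ a.e.~in $B$, and together with $\D\bv^n=(\nabla\bv^n)^{\sym}\to(\nabla\bv)^{\sym}$ this yields $\nabla\bv^n\to\nabla\bv$ a.e.~in $B$, which finishes the proof.

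The hard part will be the second paragraph: making the good-set/bad-set decomposition work with the variable exponent Lipschitz truncation of Theorem~\ref{thm:Ltp(x)}, i.e.~simultaneously controlling the Lebesgue measure of $\{\mathbf{u}^{n,j}\neq\mathbf{u}^n\}\cup\{\bfpsi^{n,j}\neq\bfpsi^n\}$ and the mass of $|h^{n,j}|$ over it via \eqref{eq:C18px}, $\lambda_{n,j}\ge2^{2^j}$, Lemma~\ref{lem:unit_ball_px} and the $L^{p'(\cdot)}(B')$-boundedness of the stresses --- the latter being precisely where the structure conditions (S.2), (N.2) and $\bE\in L^\infty$ enter. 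A minor but genuine point is that the a.e.~convergence $\w^n\to\w$ (available from Theorem~\ref{rellichp(x)}, unlike that of $\nabla\w^n$) is indispensable in the last step in order to disentangle $(\nabla\bv^n)^{\anti}$ from $\bfvarepsilon:\w^n$ inside $\Rr(\bv^n,\w^n)$; the $\nabla\tau$-error terms, by contrast, are harmless since they are killed by the compactness in $L^{p(\cdot)}(B')$.
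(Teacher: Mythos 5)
Your overall plan coincides with the paper's: localise with $\tau$ to $B'\subset\subset\Omega_0$, upgrade \eqref{stab-konv} to weak $W^{1,p(\cdot)}(B')$-convergence and to strong plus a.e.\ convergence via Lemma~\ref{symgradp(x)}, Lemma~\ref{hatgradp(x)} and Theorem~\ref{rellichp(x)}, run the Lipschitz truncation of Theorem~\ref{thm:Ltp(x)}, show that the integral over $B$ of the nonnegative monotonicity expression raised to a power $\theta\in(0,1)$ tends to zero, and finish by the Dal Maso--Murat dichotomy. The gap is in the good-set/bad-set step.

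You work on the intersection $\mathcal{G}_{n,j}=\{\bfu^n=\bfu^{n,j}\}\cap\{\bfpsi^n=\bfpsi^{n,j}\}$ and therefore must control $\int_{B'\setminus\mathcal{G}_{n,j}}\abs{h^{n,j}}\,dx$, where by \eqref{eq:symm}
\[
  h^{n,j}=\big(\Sn-\Ss\big):\nabla\bfu^{n,j}+\big(\Sn-\Ss\big)^\anti:\big(\bfvarepsilon\cdot\bfpsi^{n,j}\big)+\big(\Nn-\N\big):\nabla\bfpsi^{n,j}\,.
\]
The middle term is harmless, and so are the contributions of the first and third term over the \emph{matching} bad sets $\{\bfu^n\ne\bfu^{n,j}\}$ and $\{\bfpsi^n\ne\bfpsi^{n,j}\}$ respectively, by the third and fourth estimates in \eqref{eq:C18px}. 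But $B'\setminus\mathcal{G}_{n,j}$ also contains the cross-set $\{\bfu^n=\bfu^{n,j}\}\cap\{\bfpsi^n\ne\bfpsi^{n,j}\}$, on which $\nabla\bfu^{n,j}=\nabla\bfu^n$ a.e.\ carries no smallness, and $\int_{\{\bfpsi^n\ne\bfpsi^{n,j}\}}\abs{(\Sn-\Ss):\nabla\bfu^{n,j}}\,dx$ is not controlled by the tools you invoke: the pointwise bound $\norm{\nabla\bfu^{n,j}}_\infty\le c\,\lambda_{n,j}$ together with H\"older only gives a bound by $c\,\lambda_{n,j}\,\norm{\chi_{\{\bfpsi^n\ne\bfpsi^{n,j}\}}}_{p(\cdot)}$, where the $\lambda_{n,j}$ attached to $\bfu^{n,j}$ may be as large as $2^{2^{j+1}}$ while the $\lambda_{n,j}$ attached to $\bfpsi^{n,j}$ is only known to be $\ge 2^{2^j}$ (the two applications of Theorem~\ref{thm:Ltp(x)} are independent), so the fourth estimate in \eqref{eq:C18px} yields only $\limsup_{n}\norm{\chi_{\{\bfpsi^n\ne\bfpsi^{n,j}\}}}_{p(\cdot)}\le c\,2^{-2^j}\,2^{-j/p^+}$ and the product behaves like $2^{2^j}2^{-j/p^+}\to\infty$. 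Replacing $\nabla\bfu^{n,j}$ by $\nabla\bfu^n$ there does not help either, since boundedness of $\nabla\bfu^n$ in $L^{p(\cdot)}(B')$ plus $\abs{\{\bfpsi^n\ne\bfpsi^{n,j}\}}\to 0$ does not give the required vanishing without uniform integrability. The symmetric cross-term for $\nabla\bfpsi^{n,j}$ on $\{\bfu^n\ne\bfu^{n,j}\}\cap\{\bfpsi^n=\bfpsi^{n,j}\}$ suffers from the same problem.

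The paper avoids this by splitting $\alpha_n$ and $\beta_n$ over the \emph{separate} matching sets $\{\bfu^n=\bfu^{n,j}\}$ and $\{\bfpsi^n=\bfpsi^{n,j}\}$ and, in \eqref{eq:I5}--\eqref{eq:i56}, by rewriting the skew part so that the unavoidable cross-set contributions only involve $\bfvarepsilon\cdot\bfpsi^{n,j}$ (controlled via $\norm{\bfpsi^{n,j}}_\infty\to 0$, term $I_8^{n,j}$) and $\bfvarepsilon\cdot(\w^n-\w)\tau$ (controlled via strong $L^{p(\cdot)}(B')$-convergence, term $I_9^{n,j}$) --- never $\nabla\bfu^{n,j}$ or $\nabla\bfpsi^{n,j}$ on the wrong bad set. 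To repair your argument you must reorganise the decomposition in this spirit, or else invoke a joint Lipschitz truncation of the pair $(\bfu^n,\bfpsi^n)$ with a common level $\lambda_{n,j}$, which Theorem~\ref{thm:Ltp(x)} as stated does not provide. The remaining steps of your proposal (the uniform $L^{p'(\cdot)}$-bounds on the stresses via (\hyperlink{(S.2)}{S.2}), (\hyperlink{(N.2)}{N.2}) and $\bE\in L^\infty$, the treatment of the $\nabla\tau$-errors by compactness, and the final dichotomy à la Dal Maso--Murat) are sound and match the paper.
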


\begin{Bem}\label{hatgrad}
(i) For each ball $B'\subset\subset\Omega_0$, Lemma
    \ref{symgradp(x)} and Lemma \ref{hatgradp(x)} yield the embeddings
    $\smash{\XDpo},\smash{\XGpEo}\hookrightarrow\smash{W^{1,p(\cdot)}(B')}$
    and,~therefore,~that ${\bfv^n-\bv,\w^n-\w\!\in\! W^{1,p(\cdot)}(B')}$ for
    all $n\!\in\! \mathbb{N}$, which is crucial for the applicability of
    the Lipschitz truncation technique
    (cf.~Theorem~\ref{thm:Ltp(x)}). In particular,
    Lemma~\ref{hatgradp(x)} yields that
    ${\nabla(\w|_{B'})=(\hat{\nabla}\w)|_{B'}}$ in $L^{p(\cdot)}(B')$
    for all $\w \in \smash{\XGpEo}$, which is precisely the sense in
    which the gradients of both $\w\in \smash{\XGpEo}$ and
    ${(\w^n)_{n\in \mathbb{N}}\in \smash{\XGpEo}}$ are to be
    understood~in~\eqref{mon4}.\\[-3mm]

(ii) The boundedness of
    $(\bfR(\bfv^n,\w^n))_{n\in \mathbb{N}}\subseteq
    L^{p(\cdot)}(\Omega;\vert \bfE\vert^2)$ yields a tensor field
    $\widehat{\bR}\in L^{p(\cdot)}(\Omega;\vert \bfE\vert^2)$ such
    that up to a not relabeled subsequence
    \begin{align}
      \bfR(\bfv^n,\w^n)\rightharpoonup\widehat{\bR}\quad\text{in}\
      L^{p(\cdot)}(\Omega;\vert \bfE\vert^2)\quad(n\to \infty)
      \,.\label{hatgrad1} 
    \end{align}
    Using the embedding $L^{p(\cdot)}(\Omega)\vnor L^{p^-}(\Omega)$,
    Korn's inequality for the constant exponent $p^-\ge\frac{2d}{d+2}$
    and $L^{p^-}(\Omega)\vnor L^{p^-}(\Omega;\abs{\bE}^2)$, since
    ${\bfE\in L^\infty(\Omega)}$, we deduce from \eqref{stab-konv}
    that
    \begin{align}
      \begin{aligned}
        \nabla\bv^n&\rightharpoonup\nabla\bv\quad& &\text{in}\
        L^{p^-}(\Omega;\vert\bfE\vert^2)&&\quad(n\to \infty)\,,
        \\
        \w^n\cdot\boldsymbol\epsilon&\rightharpoonup\w\cdot\boldsymbol\epsilon\quad&
        &\text{in}\ L^{p^-}(\Omega;\vert\bfE\vert^2) &&\quad(n\to
        \infty)\,.
      \end{aligned}\label{hatgrad2}
    \end{align}
    Combining \eqref{hatgrad1}, \eqref{hatgrad2}, the embedding
    $L^{p(\cdot)}(\Omega;\abs{\bE}^2)\vnor
    L^{p^-}(\Omega;\abs{\bE}^2)$ and the definition of
    $\bfR(\bfv,\w)$, we conclude that
    $\bfR(\bfv,\w)=\widehat{\bfR}\in
    L^{p(\cdot)}(\Omega;\vert\bfE\vert^2)$ in \eqref{hatgrad1}. Thus,
    the expression with $\bfR(\bfv,\w)$ in \eqref{mon4} is well-defined.
\end{Bem}

\begin{proof}[Proof of Lemma~\ref{DalMaso3p(x)}]
  In view of the embeddings
  $\smash{\VDpo\hookrightarrow W^{1,p(\cdot)}(B')}$, $\smash{\XGpEo\hookrightarrow W^{1,p(\cdot)}(B')}$ 
  (cf.~Lemma~\ref{symgradp(x)} and Lemma~\ref{hatgradp(x)}) as well as
  \linebreak 
  $W^{1,p(\cdot)}(B')\vnor W^{1,p^-}(B')$, we~deduce~from \eqref{stab-konv}, also using Rellich's~compact-ness~theorem for
  constant exponents and Theorem \ref{rellichp(x)}, that
  \begin{align}
    \begin{aligned}\label{konvergenz-stab}
      \bv^n&\rightharpoonup\bv\quad& &\text{in}\
      W^{1,p(\cdot)}(B')&&\quad(n\to \infty)\,, \\\bv^n&\to\bv\quad&
      &\text{in}\ L^q(B')\cap L^{p(\cdot)}(B')\text{ and a.e. in }
      B' &&\quad(n\to \infty)\,,\\
      \w^n&\rightharpoonup\w\quad& &\text{in}\
      W^{1,p(\cdot)}(B')&&\quad(n\to \infty)\,,
      \\
      \w^n&\to\w\quad& &\text{in}\ L^q(B')\cap L^{p(\cdot)}(B') \text{
        and a.e. in } B'&&\quad(n\to \infty)\,,
    \end{aligned}
  \end{align}
  where $q\hspace*{-0.1em}\in \hspace*{-0.1em}\left[1,(p^-)^*\right)$.
  \!Throughout the proof, we will employ the~particular~notation
  \begin{align}
    \begin{aligned}\label{eq:not}
      \widetilde\bS&:=\Ss\,, \quad &&\mathbf{S}^n\,:=\Sn\,,
      \\
      \widetilde\bN&:=\N\,, \quad &&\mathbf{N}^n:=\Nn\,.
    \end{aligned}
  \end{align}
  Using (\hyperlink{(S.2)}{S.2}), (\hyperlink{(N.2)}{N.2}), Assumption
  \ref{VssE}, \eqref{stab-konv} and Remark \ref{hatgrad}, we obtain a constant
  ${K:=K(\bE)>0}$ (not depending on $n\in \mathbb{N}$) such
  that 
\begin{equation}
  \label{stab-est}
  \begin{aligned}
\hspace*{-3mm}     \Vert\bv^n\|_{ \smash{\VDpo}}\! +\|\bv \Vert_{\smash{\VDpo}}\!+ 
     \Vert\w^n\|_{p^-,
      p(\cdot),\abs{\bE}^2}\!+\|\w\Vert_{p^-, p(\cdot),\abs{\bE}^2}&\le
    K\,, \hspace*{-7.5mm}
    \\
    \|\bfR(\bv^n,\w^n)\|_{ p(\cdot),\abs{\bE}^2}+\|\bfR(\bv,\w)\|_{ p(\cdot),\abs{\bE}^2}&\le K\,,
    \\
    \hspace*{-2mm}\Vert\mathbf{S}^n\Vert_{p'(\cdot)} \! +  \Vert\widetilde{\mathbf{S}}\Vert_{p'(\cdot)}\!+  \Vert
    (\mathbf{S}^n)^\anti\Vert_{\smash{p'(\cdot),\vert
      \bfE\vert^{\smash{\frac{-2}{p(\cdot)-1}}}}}\!+\Vert
    \widetilde{\mathbf{S}}^\anti\Vert_{\smash{p'(\cdot),\vert
      \bfE\vert^{\smash{\frac{-2}{p(\cdot)-1}}}}} &\le K\,,     \hspace*{-7.5mm}
    \\
    \Vert\mathbf{N}^n\Vert_{\smash{p'(\cdot),\vert
      \bfE\vert^{\smash{\frac{-2}{p(\cdot)-1}}}}}+\Vert\widetilde {\mathbf{N}}
    \Vert_{\smash{p'(\cdot),\vert \bfE\vert^{\smash{\frac{-2}{p(\cdot)-1}}}}}&\le
    K\,.
  \end{aligned}
\end{equation}
	Recall that $\tau\in C_0^\infty(B')$ with
	$\chi_B \le \tau\leq \chi_{B'}$. Hence, using (\hyperlink{(S.4)}{S.4}) and~(\hyperlink{(N.4)}{N.4}),~we~get
	\begin{align}
		&I^n\!:=\!\int_B\! \big [\!\big (\bS^n\!-\!\widetilde \bS\big )\!:\! \big
		(\D(\bv^n\!-\!\bv)\!+\!\Rr(\bv^n\!-\!\bv,\w^n\!-\!\w) \big)
		\!+\!\big(\bN^n\!-\!\widetilde \bN\big )\!:\!\nabla(\w^n\!-\! \w)\big)^\theta\, dx \notag
		\\
		&\le\!\int_{B'}\!\big [\big (\bS^n\!-\!\widetilde \bS\big )\!:\!
		\big (\D(\bv^n\!-\!\bv)\!+\!\Rr(\bv^n\!-\!\bv,\w^n\!-\!\w) \big)
		\!+\!\big(\bN^n\!-\!\widetilde \bN\big )\!:\!\nabla(\w^n\!-\! \w) \big)^\theta \!\tau ^\theta \,dx \notag
		\\
		&\le\int_{B'}\big [\big (\bS^n-\widetilde \bS\big ):
		\big (\D(\bv^n-\bv)+\Rr(\bv^n-\bv,\w^n-\w) \big)
		\tau \big)^\theta \, dx \label{eq:I}
		\\
		&\quad +\int_{B'}\big [\big(\bN^n-\widetilde \bN\big
		):\nabla(\w^n- \w) \tau \big)^\theta 
		dx =:\int_{B'} \alpha_n^\theta  \,dx +\int_{B'}
		\beta_n^\theta \, dx \,,\notag 
	\end{align}
	where we also used that
        \begin{gather}
          \smash{\frac{1}{2} }(a^\theta+ b^\theta ) \le (a+ b )^\theta \le
          a^\theta+ b^\theta\label{eq:1}
        \end{gather}
        valid for all $a,b\ge 0$~and~${\theta \in (0,1)}$. Then,
	splitting the integral of $\alpha_n^\theta$ over $B'$ into an
	integral over $\{\mathbf{u}^n\neq\mathbf{u}^{n,j}\} $ and one over
	$\{\mathbf{u}^n=\mathbf{u}^{n,j}\} $, also using H\"older's inequality 
	with exponents $\frac{1}{\theta},\frac{1}{1-\theta}$, 
	we~find~that
	\begin{align}
		\int_{B'} \alpha_n^\theta \, dx  &\le \| \alpha_n
		\|_{L^1( B')}^\theta
		\vert\{\mathbf{u}^n\neq\mathbf{u}^{n,j}\}\vert^{1-\theta} + \|
		\alpha_n \chi_{\{\mathbf{u}^n=\mathbf{u}^{n,j}\} }
		\|_{L^1(B')}^\theta\vert B'\vert^{1-\theta}\notag
		\\
		&=:(I_{1}^n)^{\theta} \vert\{\mathbf{u}^n\neq\mathbf{u}^{n,j}\}\vert^{1-\theta}
		+ \|	\alpha_n \chi_{\{\mathbf{u}^n=\mathbf{u}^{n,j}\} }
           \|_{L^1(B')}^\theta\vert B'\vert^{1-\theta}
           \,.\label{eq:i1}
	\end{align}
	For the first term, we will use  \eqref{eq:C18px}$_4$ and, thus, have to
	show~that~$(I_1^n)_{n\in \mathbb{R}}\subseteq \mathbb{R}$~is bounded.
	In fact,  by combining \eqref{stab-konv},
	\eqref{stab-est},  
	$\tau \le 1$, as well as
        \begin{align}\label{eq:symm}
          \bA:(\bD\bu + \bR(\bu,\bw))
          = \bA: \nabla \bu +
          \bA^\anti:(\bfepsilon\cdot \bw )\,,
        \end{align}
	valid for vector fields $\bu$, $\bw$ and tensor~fields~$\bA$,
         we observe that
	\begin{align}
          \begin{aligned}
            I_{1}^n &\le \big (\Vert(\bS^n)^\sym\Vert_{p'(\cdot)} +
            \Vert\widetilde \bS^\sym\Vert_{p'(\cdot)} \big ) \Vert\bfD
            \bv^n-\bfD \bv \Vert_{p(\cdot)}\label{eq:i1est}
            \\
            &\quad + \big
            (\Vert(\bS^n)^\anti\Vert_{\smash{p'(\cdot),\vert
                \bfE\vert^{\smash{\frac{-2}{p(\cdot)-1}}}}} + \Vert
            \widetilde \bS^\anti\Vert_{\smash{p'(\cdot),\vert
                \bfE\vert^{\smash{\frac{-2}{p(\cdot)-1}}}}} \big )
            \times
            \\
            &\qquad\quad
            \times\Vert\bfR(\bfv^n,\w^n)-\bfR(\bfv,\w)\Vert_{
              p(\cdot),\abs{\bE}^2} 
          \le 2\,K^2\,.
          \end{aligned}
	\end{align}
	Similarly, we deduce that
	\begin{align}
		\int_{B'} \beta_n^\theta \, dx  &\le \| \beta_n
		\|_{L^1( B')}^\theta
		\vert\{\bfpsi^n\neq\bfpsi^{n,j}\}\vert^{1-\theta}\notag
		+ \|\beta_n \chi_{\{\bfpsi^n=\bfpsi^{n,j}\}  }
		\|_{L^1(B')}^\theta\vert B'\vert^{1-\theta}
		\\
		&=:(I_{2}^n)^{\theta} \vert\{\bfpsi^n\neq\bfpsi^{n,j}\}\vert^{1-\theta}
		+ \|\beta_n \chi_{\{\bfpsi^n=\bfpsi^{n,j}\}  }
           \|_{L^1(B')}^\theta\vert B'\vert^{1-\theta} 
           \,,\label{eq:i3}
	\end{align}
	and that
	\begin{align}
          \begin{aligned}
            \hspace*{-2mm} I_{2}^n &\le \big (\Vert\bN^n\Vert_{\smash{p'(\cdot),\vert \bfE\vert^{\smash{\frac{-2}{p(\cdot)-1}}}}} +
            \Vert\widetilde \bN\Vert_{\smash{p'(\cdot),\vert \bfE\vert^{\smash{\frac{-2}{p(\cdot)-1}}}}}
            \big )\Vert\nabla \w^n-\nabla \w \Vert_{p(\cdot),\abs{\bE}^2}
          \le K^2\,.\hspace*{-4mm} 
          \end{aligned}\label{eq:i3est}
	\end{align}
	Using \eqref{eq:i1}, \eqref{eq:i1est}--\eqref{eq:i3est} and \eqref{eq:1}
	we, thus, conclude that
	\begin{align}
          \begin{aligned}
            &\int_{B'} \alpha_n^\theta \, dx +\int_{B'} \beta_n^\theta \, dx
            \\
            &\le 2^\theta K^{2\theta} \big
            (\vert\{\bfu^n\neq\bfu^{n,j}\}\vert^{1-\theta}
            +\vert\{\bfpsi^n\neq\bfpsi^{n,j}\}\vert^{1-\theta} \big )
             \\
            &\quad + 2\,\vert B'\vert^{1-\theta} \, 
            \bigg(\int_{B'}{\alpha_n \chi_{\{\bfu^n=\bfu^{n,j}\}}\,dx}
            +\int_{B'}{\beta_n\,\chi_{\{\bfpsi^n=\bfpsi^{n,j}\}}\,dx}
            \bigg)^{\smash{\theta}}\,.
          \end{aligned}\label{eq:est-ab}
	\end{align}
	Let us now treat the last two integrals, which we denote by
	$I_3^{n,j}$ and $I_4^{n,j}$.  We~have that $ \nabla (\bfv^n -\bfv) \tau = \nabla\bu^{n,j}-
	(\bv^n-\bv) \otimes \nabla \tau$ on $\{\mathbf{u}^n=\mathbf{u}^{n,j}\}
	$, which, using \eqref{eq:symm}, 
	 implies that
	\begin{align}
	\begin{aligned}
		I_3^{n,j}&= \bigskp{ \bfS^n- \widetilde \bfS}{\big(\nabla\bu^{n,j}-
			(\bv^n-\bv) \otimes \nabla \tau \big)
			\chi_{\{\mathbf{u}^n=\mathbf{u}^{n,j}\}}}
		\\
		&\quad +\bigskp{ \big(\bfS^n- \widetilde \bfS\big )^\anti
		}{\bfepsilon \cdot \bfpsi^{n,j} \chi_{\{\bfpsi^n=\bfpsi^{n,j}\}
		}}
		\\
		&\quad +\bigskp{ \big(\bfS^n- \widetilde \bfS\big )^\anti
		}{\bfepsilon \cdot (\w^n -\w)\tau 
			\chi_{\{\mathbf{u}^n=\mathbf{u}^{n,j}\}\cap
				\{\bfpsi^n\neq\bfpsi^{n,j}\} }}
		\\
		&\quad -\bigskp{ \big(\bfS^n- \widetilde \bfS\big )^\anti
		}{\bfepsilon \cdot (\w^n-\w)\tau 
			\chi_{\{\mathbf{u}^n\neq\mathbf{u}^{n,j}\}\cap
				\{\bfpsi^n=\bfpsi^{n,j}\}}}\,.
				\end{aligned}\label{eq:I5}
	\end{align}
	We have $ \nabla (\w^n
	-\w) \tau \hspace*{-0.1em}=\hspace*{-0.1em} \nabla\bfpsi^{n,j}\hspace*{-0.1em}-\hspace*{-0.1em} (\w^n-\w) \otimes \nabla \tau $ on $\{\bfpsi^n\hspace*{-0.1em}=\hspace*{-0.1em}\bfpsi^{n,j}\} $,
	which~implies~that
	\begin{align}
		\begin{aligned}
		I_4^{n,j}= \bigskp{ \bfN^n- \widetilde \bfN}{\big(\nabla \bfpsi^{n,j}-(
				\w^{n}-\w)\otimes \nabla \tau 
			\big)	\chi_{\{\bfpsi^n=\bfpsi^{n,j}\} }}\,.
		\end{aligned}\label{eq:I6}
	\end{align}
	Using  \eqref{eq:symm}
	and adding  suitable terms,
	we deduce from \eqref{eq:I5} and~\eqref{eq:I6}~that
	\begin{align}\begin{aligned}
		I^{n,j}_{3} + I^{n,j}_{4} &\le \bigabs{\bigskp{ \bfS^n-  \widetilde \bfS}{ \bfD
				\bfu^{n,j} + \Rr(\bu^{n,j},\bpsi^{n,j}) }+\bigskp{
				\bfN^n- \widetilde \bfN}{\nabla \bfpsi^{n,j} } } 
		\\
		&\quad +\bigabs{\bigskp{ \bfS^n- \widetilde \bfS}{\!\nabla
				\bfu^{n,j}\chi_{\{\mathbf{u}^n\neq\mathbf{u}^{n,j}\}}}} \\
			&\quad +
		\bigabs{\bigskp{ \bfN^n- \widetilde \bfN}{\nabla
				\bfpsi^{n,j} \chi_{\{\bfpsi^n\neq \bfpsi^{n,j}\} }} } 
		\\
		& \quad + \bigabs{\bigskp{ \big(\bfS^n- \widetilde \bfS\big
				)^\anti }{\bfepsilon \cdot \bfpsi^{n,j}
				\chi_{\{\bfpsi^n\neq\bfpsi^{n,j}\} }}} \\
			&\quad + \bigskp{\bigabs{
				\big(\bfS^n- \widetilde \bfS\big )^\anti }}{\bigabs{
				\w^n-\w}\tau } 
		\\
		&\quad +\bigskp {\bigabs{ (\bfS^n- \widetilde \bfS)^\sym}}{\bigabs{(
				(\bfv^{n}-\bfv)\otimes \nabla \tau)^\sym }}\\
			&\quad  +\bigskp {\bigabs{
				\bfN^n-\widetilde \bfN}}{\bigabs{( \w^{n}-\w)\otimes
				\nabla \tau }} 
		\\
		&=: I_{5}^{n,j}+I_{6}^{n,j} +I_7^{n,j}+I_8^{n,j}+I_9^{n,j}+I_{10}^{n,j}+I_{11}^{n,j}\,.
\end{aligned}		\label{eq:i56}
	\end{align}\newpage
	\hspace*{-5mm}The term $I_5^{n,j}\!$, \hspace*{-0.1em}i.e.,
        \hspace*{-0.1em}the first line on the right-hand side in
        \eqref{eq:i56}, is handled~by \eqref{mon4}. For the other
        terms we~obtain,  using H\"older's inequality, \eqref{eq:mo-no}~and~\eqref{stab-est}, 
	\begin{align}
          I_6^{n,j}&\le 2\,\big (\Vert\bS^n\Vert_{p'(\cdot)} + \Vert\widetilde
                     \bS\Vert_{p'(\cdot)}
                     \big ) \Vert \nabla
                     \bfu^{n,j}\chi_{\{\mathbf{u}^n\neq\mathbf{u}^{n,j}\}} \Vert_{
                     L^{p(\cdot)}(B')} \notag
          \\[-1mm]
                   &\le 2\,K\Vert
                     \nabla \bfu^{n,j}\chi_{\{\mathbf{u}^n\neq\mathbf{u}^{n,j}\}}
                     \Vert_{ L^{p(\cdot)}(B')} \,,\label{eq:i8}
          \\[1mm]
          I_7^{n,j}&\le 2\,\big (\Vert\bN^n\Vert_{\smash{p'(\cdot),\vert \bfE\vert^{\smash{\frac{-2}{p(\cdot)-1}}}}} + \Vert\widetilde \bN\Vert_{\smash{p'(\cdot),\vert \bfE\vert^{\smash{\frac{-2}{p(\cdot)-1}}}}} \big ) \Vert \nabla
                     \bfpsi^{n,j}\chi_{\{\bfpsi^n\neq\bfpsi^{n,j}\}} \Vert_{
                     L^{p(\cdot)}(B';\abs{\bE}^2)} \notag
          \\[-1mm]
                   &\le  4\,K\,\|\bfE\|_{\infty}^{\frac{2}{p^-}}\Vert \nabla
                     \bfpsi^{n,j}\chi_{\{\bfpsi^n\neq\bfpsi^{n,j}\}} \Vert_{
                     L^{p(\cdot)}(B')} \,,\label{eq:i9}\\[1mm]
          I_{8}^{n,j}&\le 2\,\big (\Vert\hspace{-0.1em}(\bS^n)^\anti\Vert_{\smash{p'(\hspace{-0.1em}\cdot\hspace{-0.1em}),\vert \bfE\vert^{\smash{\frac{-2}{p(\hspace{-0.1em}\cdot\hspace{-0.1em})-1}}}}}\! + \!\Vert\widetilde \bS\,^\anti\Vert_{\smash{p'(\hspace{-0.1em}\cdot\hspace{-0.1em}),\vert \bfE\vert^{\smash{\frac{-2}{p(\hspace{-0.1em}\cdot\hspace{-0.1em})-1}}}}} \big ) \Vert 
                        \bfpsi^{n,j}\Vert_{L^\infty(B')}\abs{\Omega}^{\frac{1}{p^-}}\norm{\bE}_{\infty}^{\frac{2}{p^-}} \notag
          \\[-1mm]
                   &\le 2\, K \, \abs{\Omega}^{\frac{1}{p^-}}\norm{\bE}_{\infty}^{\frac{2}{p^-}}  \Vert \bfpsi^{n,j} \Vert_{ L^\infty(B')} \,,\label{eq:i10}\\[1mm]
          I_{9}^{n,j}&\le 2\,\big (\Vert(\bS^n)^\anti\Vert_{\smash{p'(\cdot),\vert \bfE\vert^{\smash{\frac{-2}{p(\cdot)-1}}}}} + \Vert\widetilde
                        \bS\,^\anti\Vert_{\smash{p'(\cdot),\vert \bfE\vert^{\smash{\frac{-2}{p(\cdot)-1}}}}} \big )
                        \Vert\w^n-\w \Vert_{L^{p(\cdot)}(B';\abs{\bE}^2)} \notag 
          \\[-1mm]
                   &\le  4\,K\,\|\bfE\|_{\infty}^{\frac{2}{p^-}}\Vert\w^n-\w \Vert_{
                     L^{p(\cdot)}(B')} \,,\label{eq:i11}\\[1mm]
          I_{10}^{n,j}&\le 2\, \big (\Vert\bS^n\Vert_{p'(\cdot)} + \Vert\widetilde
                        \bS \Vert_{p'(\cdot)}
                        \big )\norm{\nabla \tau}_\infty\,\Vert\bv^n- \bv
                        \Vert_{L^{p(\cdot)}(B')}\notag
          \\
                   &\le  2\,K\,\norm{\nabla \tau}_\infty\Vert\bv^n-\bv
                     \Vert_{L^{p(\cdot)}(B')} \,,\label{eq:i12}\\[1mm]
          I_{11}^{n,j}&\le 2\,\big (\Vert\bN^n\Vert_{\smash{p'(\cdot),\vert \bfE\vert^{\smash{\frac{-2}{p(\cdot)-1}}}}} + \Vert\widetilde \bN\Vert_{\smash{p'(\cdot),\vert \bfE\vert^{\smash{\frac{-2}{p(\cdot)-1}}}}} \big ) \norm{\nabla
                        \tau}_\infty\,\Vert\w^n-  \w \Vert_{
                        L^{p(\cdot)}(B';\abs{\bE}^2)}\notag 
          \\[-1mm]
                   &\le 4\, K\,\norm{\nabla
                     \tau}_\infty\,\|\bfE\|_{\infty}^{\frac{2}{p^-}}\Vert\w^n- \w
                     \Vert_{ L^{p(\cdot)}(B')} \,.\label{eq:i13}
	\end{align}
	Using \eqref{eq:C18px}, \eqref{stab-konv}--\eqref{konvergenz-stab}
	and $1\leq \lambda_{n,j}^p$, we deduce from \eqref{eq:I},
	\eqref{eq:est-ab}--\eqref{eq:i13} for any ${j\in \setN}$
	\begin{equation*}
                  \limsup_{n\to \infty} I^n\le \delta_j^\theta + c\,
                  K^{2\theta} 2^{-j(1-\theta)}+ c\,(1+
                  \norm{\bE}_{\infty}^{\frac{2}{p^-}}) ^{\theta} K^\theta
                  2^{\frac{-j\theta}{p^+}}\,.
	\end{equation*}
	Since $\lim_{j\to \infty} \delta_j  =0$, we observe that $I^n\!\to\! 0$ $(n\!\to\! \infty)$, which,~owing~to~${\theta \!\in\! (0,1)}$, (\hyperlink{(S.4)}{S.4}) and (\hyperlink{(N.4)}{N.4}),  implies for a suitable
	subsequence that
	\begin{align*}
		\big (\bS^n-\widetilde \bS\big ): \big
		(\D(\bv^n-\bv)+\Rr(\bv^n-\bv,\w^n-\w) \big) &\to 0
		\qquad \textrm{ a.e.~in }B&&\quad(n\to \infty)\,,
		\\
		\big(\bN^n-\widetilde \bN\big )\!:\!\big
		(\nabla\w^n-\nabla \w\big ) &\to 0 \qquad \textrm{ a.e.~in
		}B&&\quad(n\to \infty)\,.
	\end{align*}
	In view of \eqref{konvergenz-stab}, we also know that $\w ^n \to \w$
	a.e.~in~$B$~and,~hence,~we~can~conclude the assertion of
	Lemma \ref{DalMaso3p(x)} as in the proof of \cite[Lem.~6]{DalMaso2}.
\end{proof}

\begin{Kor}\label{cor:ae-conv}
	Let the assumptions of Lemma \ref{DalMaso3p(x)} be satisfied for all
	balls ${B\subset \subset \Omega_0}$ with
	${B':=2B \subset \subset \Omega_0}$.
	Then, we have for suitable subsequences that
	$\nabla \bv^n \to \nabla \bv$ a.e. in $\Omega$ $(n\to \infty)$,
	$ \hat\nabla \w^n \to \hat\nabla \w$ a.e.~in $\Omega$
	$(n\to \infty)$ and $\w^n \to \,\w $ a.e.~in $\Omega$
	$(n\to \infty)$.
\end{Kor}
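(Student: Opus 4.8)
The plan is to localise the statement to the full-measure open set $\Omega_0$ and to combine a countable exhaustion of $\Omega_0$ by balls with a diagonal extraction, feeding each ball into Lemma~\ref{DalMaso3p(x)}.

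First, by Assumption~\ref{VssE} the set $\abs{\bE}^{-1}(0)$ is a Lebesgue null set, so $\Omega_0$ has full measure in $\Omega$; hence it is enough to establish the three a.e.\ convergences on $\Omega_0$. Since $\Omega_0$ is open, I enumerate by $(B_k)_{k\in\mathbb{N}}$ all open balls with rational centre and rational radius satisfying $2B_k\subset\subset\Omega_0$; then $\bigcup_{k\in\mathbb{N}}B_k=\Omega_0$, and each such $B_k$ together with $2B_k$ is admissible in Lemma~\ref{DalMaso3p(x)}.

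Next, I observe that all hypotheses of Lemma~\ref{DalMaso3p(x)} are inherited by arbitrary subsequences: the weak convergences \eqref{stab-konv} and the boundedness of $(\bfR(\bv^n,\w^n))_{n\in\mathbb{N}}$ in $L^{p(\cdot)}(\Omega;\abs{\bE}^2)$ obviously pass to subsequences, and passing to a subsequence can only decrease the $\limsup$ in the monotonicity hypothesis \eqref{mon4}, so it remains bounded by $\delta_j$. Therefore I may argue inductively: Lemma~\ref{DalMaso3p(x)} applied to $B_1$ yields a subsequence along which $\nabla\bv^n\to\nabla\bv$, $\nabla\w^n\to\nabla\w$ and $\w^n\to\w$ a.e.\ in $B_1$; applying the lemma to this subsequence with $B_2$ produces a further subsequence for which the same three convergences hold a.e.\ in $B_2$ (and, being a subsequence, still a.e.\ in $B_1$), and so on. The diagonal subsequence, say $(\bv^{n_m},\w^{n_m})_{m\in\mathbb{N}}$, is, from index $k$ on, a subsequence of the $k$-th extracted sequence, hence the three convergences hold a.e.\ in every $B_k$; since a countable union of null sets is null and $\bigcup_k B_k=\Omega_0$, they hold a.e.\ in $\Omega_0$, and therefore a.e.\ in $\Omega$.

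Finally, to replace the distributional gradient $\nabla\w^n$ by $\hat\nabla\w^n$, I use Lemma~\ref{hatgradp(x)} (see also Remark~\ref{hatgrad}~(i)): for every $\Omega'\subset\subset\Omega_0$ one has $\nabla(\bw|_{\Omega'})=(\hat\nabla\bw)|_{\Omega'}$ for all $\bw\in\XGpEo$, and applying this to $\w$ and to each $\w^{n_m}$ on the balls $B_k$ turns $\nabla\w^{n_m}\to\nabla\w$ a.e.\ in $\Omega_0$ into $\hat\nabla\w^{n_m}\to\hat\nabla\w$ a.e.\ in $\Omega_0$, hence a.e.\ in $\Omega$. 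The only delicate point in this argument is the persistence of hypothesis \eqref{mon4} along successive subsequences, which is exactly what legitimises the iterated application of Lemma~\ref{DalMaso3p(x)}; beyond that, the argument is a routine covering-plus-diagonalisation.
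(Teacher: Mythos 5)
Your proposal is correct and follows essentially the same route as the paper: cover $\Omega_0$ by countably many admissible balls, diagonalize the extractions from Lemma~\ref{DalMaso3p(x)}, use Remark~\ref{hatgrad}\,(i) (via Lemma~\ref{hatgradp(x)}) to identify $\nabla\w^n$ with $\hat\nabla\w^n$ locally, and conclude by $\vert\Omega\setminus\Omega_0\vert=0$. You are also right that the stability of the hypotheses (in particular \eqref{mon4}) under passage to subsequences is the point that justifies the diagonalization; the paper leaves this implicit.
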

\begin{proof}
	Using all rational tuples contained
	in~$\Omega_0$ as centers, we find a countable family $(B_k)_{k
		\in \setN}$ of balls covering $\Omega_0$ such that $B'_k:=2B_k
	\subset \subset \Omega_0$ for~every~${k \in \setN}$. Using the usual
	diagonalization procedure, we construct suitable subsequences
	such that $\w^n \to \w$ a.e.~in $\Omega_0$ $(n\to \infty)$, $\hat\nabla\w^n\to\hat\nabla\w$\footnote{Here, we used again that $(\hat\nabla\w)|_{\smash{B_k'}}=\nabla(\w|_{\smash{B_k'}})$ in $B_k'$ for all $k\in \mathbb{N}$ according to Remark~\ref{hatgrad}.} a.e.~in $\Omega_0$ $(n\to \infty)$~and
	$\nabla\bv^n\to\nabla\bv$ a.e.~in $\Omega_0$ $(n\to \infty)$. Since $\vert \Omega\setminus \Omega_0\vert=0$, we proved~the~assertion.
\end{proof}

\section{Main theorem}\label{sec:main}
Now we have everything at our disposal to prove our  main result,
namely the existence of \mbox{solutions} to the problem \eqref{NS}, \eqref{maxwell} for
$p^->\frac {2d}{d+2}$ even if the shear exponent $p$ is not globally
log--H\"older continuous. 
\begin{Sa}\label{thm:main4p(x)}
  Let $\Omega\subseteq\R^d$, $d\ge2$, be a bounded domain and let
  \mbox{Assumption~\ref{VssE}}, Assumption~\ref{VssSpx} and
  Assumption~\ref{VssNpx} be satisfied. If the shear exponent $p =\hat p \,\circ\,
  \abs{\bE}^2$ satisfies~${p^-\!>\!\frac{2d}{d+2}}$, then, for~any $\smash{\ff\hspace*{-0.05em}\in\hspace*{-0.05em} (\VDpo)^*}$ and
  $\smash{\bell\hspace*{-0.05em}\in \hspace*{-0.05em}(\XGpEo)^*}$, there exist \mbox{functions}
  $\smash{\mathbf{v}\in \VDpo}$ and $ \w\in \smash{\XGpEo }$ such that
  for every $\boldsymbol\varphi \in
  C^1_0(\Omega)$ with ${\divo \bphi = 0}$ and 
  $\boldsymbol\psi \in C_0^1(\Omega)$ with
  ${\nabla \boldsymbol\psi\in L^{\smash{\frac{q}{q-2}}}(\Omega;\vert
    \bfE\vert^{\smash{-\frac{\alpha q}{q-2}}})}$ for some
  ${q\in \left[1,(p^-)^*\right)}$, it holds
	\begin{align*}
			&\big\langle \Ss-\bv\otimes\bv,\bD\boldsymbol\varphi+\bR(\bphi,\boldsymbol\psi)\big\rangle
			\\
			& \;+\big\langle \bN(\hat \nabla \w ,\bE)-\w\otimes\bv,\nabla\boldsymbol\psi\big\rangle
			=\big\langle
			\ff,\boldsymbol\varphi\big\rangle+\big\langle\bell,
			\boldsymbol\psi\big\rangle \,.
	\end{align*}
	Moreover, there exists a constant $c>0$ such that 
	\begin{align*}
          &\Vert\bv\Vert_{{\VDpo}}+\Vert\w\Vert_{\XGpEo } 
          \\
          &\leq
		c \big (1+\norm{\bE}_2+\Vert
		\ff\Vert_{(\VDpo)^*}+\Vert\bell\Vert_{(\XGpEo)^*}\big
		)\,.
	\end{align*}
\end{Sa}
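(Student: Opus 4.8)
The plan is to prove Theorem~\ref{thm:main4p(x)} by the classical approximation–compactness scheme, combining a Galerkin (or $\epsilon$-regularized) approximation with the weak stability result of Lemma~\ref{DalMaso3p(x)} and Corollary~\ref{cor:ae-conv} to identify the limits of the nonlinear terms. First I would construct approximate solutions $(\bv^n,\w^n)$ solving a regularized version of \eqref{NS}: since the shear exponent $p$ need not be globally $\log$-H\"older continuous, one cannot work directly in $W^{1,p(\cdot)}$, so I would set up the approximation in the spaces $\VDpo$ and $\XGpEo$ introduced in Section~\ref{veroeffentlichung1}, using a Galerkin basis of $C^\infty_{0,\divo}(\Omega)$ for $\bv$ and of $C^\infty_0(\Omega)$ for $\w$, and possibly an additional $\epsilon$-term (e.g. adding $\epsilon(1+|\bfD\bv|^{s-2})\bfD\bv$ with $s$ large enough, and similarly for $\w$) to guarantee solvability of the finite-dimensional systems and enough integrability. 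Existence of the approximate solutions follows from a standard Brouwer fixed-point / coercivity argument, using the coercivity estimates (\hyperlink{(S.3)}{S.3}), (\hyperlink{(N.3)}{N.3}) together with the identity \eqref{eq:symm} and the skew-symmetry structure that makes the convective terms $\langle\bv^n\otimes\bv^n,\bD\bphi+\bR(\bphi,\bpsi)\rangle$ and $\langle\w^n\otimes\bv^n,\nabla\bpsi\rangle$ vanish on the diagonal.

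Next I would derive uniform a priori estimates. Testing with $(\bv^n,\w^n)$ and using (\hyperlink{(S.3)}{S.3}), (\hyperlink{(N.3)}{N.3}), $|\bfE|\in L^\infty$, together with the embedding $L^{p(\cdot)}(\Omega)\hookrightarrow L^{p^-}(\Omega)$, Korn's inequality for the constant exponent $p^-$ and Poincar\'e's inequality, yields boundedness of $(\bv^n)$ in $\VDpo$, of $(\w^n)$ in $\XGpEo$, and of $(\bfR(\bv^n,\w^n))$ in $L^{p(\cdot)}(\Omega;|\bfE|^2)$; here the lower bound $p^->\frac{2d}{d+2}$ is exactly what is needed so that the convective terms are controlled via the Sobolev embedding $W^{1,p^-}\hookrightarrow L^{(p^-)^*}$ with $(p^-)^*>2$, i.e. $\bv\otimes\bv\in L^{q/(q-1)}$ for some $q<(p^-)^*$. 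Growth bounds (\hyperlink{(S.2)}{S.2}), (\hyperlink{(N.2)}{N.2}) then give boundedness of $\bfS^n,\bfN^n$ in the appropriate (weighted) dual Lebesgue spaces, as recorded in \eqref{stab-est}. Passing to a subsequence, I would extract weak limits $\bv^n\weakto\bv$ in $\VDpo$, $\w^n\weakto\w$ in $\XGpEo$, $\bfS^n\weakto\widetilde\bfS$, $\bfN^n\weakto\widetilde\bfN$ (with the $\epsilon$-terms vanishing), and by Theorem~\ref{compactnew} and Lemma~\ref{symgradp(x)}/Lemma~\ref{hatgradp(x)} strong convergence $\bv^n\to\bv$, $\w^n\to\w$ locally in $\Omega$ (and in $\Omega_0$, respectively) in suitable Lebesgue spaces; this handles the convective terms in the limit.

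The crux is to identify $\widetilde\bfS=\Ss$ and $\widetilde\bfN=\bfN(\hat\nabla\w,\bfE)$. For this I would verify the hypothesis \eqref{mon4} of Lemma~\ref{DalMaso3p(x)}: fix a ball $B\subset\subset\Omega_0$ with $B'=2B\subset\subset\Omega_0$ and cutoff $\tau$, form $\bu^n=(\bv^n-\bv)\tau$, $\bpsi^n=(\w^n-\w)\tau\in W^{1,p(\cdot)}_0(B')$ (legitimate by Remark~\ref{hatgrad}(i)), and take their Lipschitz truncations $\bu^{n,j},\bpsi^{n,j}$ via Theorem~\ref{thm:Ltp(x)}. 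Testing the approximate equations with the admissible test functions $\bu^{n,j}$ and $\bpsi^{n,j}$ (note $\bu^{n,j}$ need not be divergence-free, so one must add a Bogovski\u\i\ correction $\Bog_{B'}(\divo\bu^{n,j})$ using Theorem~\ref{bogp(x)}, valid since $p|_{B'}\in\mathcal{P}^{\log}(B')$ by Remark~\ref{locallog}) and using the weak convergences, the equiboundedness $\sup_j\|\bu^{n,j}\|_\infty\to 0$, $\nabla\bu^{n,j}\weakto\bfzero$ in $L^s$, together with the convective and pressure terms tending to zero, one obtains \eqref{mon4} with $\delta_j\to 0$. Then Lemma~\ref{DalMaso3p(x)} gives $\nabla\bv^n\to\nabla\bv$, $\nabla\w^n\to\nabla\w$, $\w^n\to\w$ a.e. in $B$, and Corollary~\ref{cor:ae-conv} upgrades this to a.e. convergence in all of $\Omega$ (resp. $\hat\nabla\w^n\to\hat\nabla\w$). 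By continuity (\hyperlink{(S.1)}{S.1}), (\hyperlink{(N.1)}{N.1}) and Vitali's theorem this identifies $\widetilde\bfS=\Ss$ and $\widetilde\bfN=\bfN(\hat\nabla\w,\bfE)$ a.e., which is enough to pass to the limit in the weak formulation against all $\bphi\in C^1_0(\Omega)$ with $\divo\bphi=0$ and all $\bpsi\in C^1_0(\Omega)$ with $\nabla\bpsi\in L^{q/(q-2)}(\Omega;|\bfE|^{-\alpha q/(q-2)})$ (the latter integrability condition being precisely what makes $\langle\w\otimes\bv,\nabla\bpsi\rangle$ well-defined via Theorem~\ref{compactnew}). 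The final energy estimate follows by taking the $\liminf$ in the a priori bound, using weak lower semicontinuity of the norms and the control $\|\bfE\|_2$ on the right-hand side coming from $\ff=\tilde\ff+\chi^E\divo(\bfE\otimes\bfE)$. \textbf{The main obstacle} I anticipate is the careful handling of the Lipschitz truncation step on $B'\subset\subset\Omega_0$: one must simultaneously truncate $\bv$ and $\w$, keep track of the skew-symmetric coupling through $\bfR$ and the term $\bfvarepsilon:\bfS$ in the angular-momentum balance, insert the Bogovski\u\i\ correction without destroying the smallness estimates, and ensure all the "junk" terms ($I_6^{n,j}$ through $I_{11}^{n,j}$ in the proof of Lemma~\ref{DalMaso3p(x)}, plus the analogous convective and corrector terms) vanish in the iterated limit $\lim_{j\to\infty}\limsup_{n\to\infty}$ — this is where the weighted variable-exponent bookkeeping and the local $\log$-H\"older continuity are essential.
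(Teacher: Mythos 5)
Your proposal follows essentially the same route as the paper's proof: a regularized approximate problem, uniform a priori bounds in $\VDpo$, $\XGpEo$ and the corresponding weighted dual spaces for $\bfS^n$, $\bfN^n$, extraction of weak limits, verification of \eqref{mon4} via Lipschitz truncation (Theorem~\ref{thm:Ltp(x)}) on balls $B'\subset\subset\Omega_0$ with a Bogovskii correction (Theorem~\ref{bogp(x)}), identification of the stress tensors through Lemma~\ref{DalMaso3p(x)} and Corollary~\ref{cor:ae-conv}, and finally Theorem~\ref{compactnew} to pass to the limit in the remaining convective term. The only cosmetic difference is that the paper obtains the approximate solutions by pseudo-monotone operator theory for a $\tfrac1n$-regularization involving the full gradient, posed in $W^{1,p(\cdot)}_{0,\divo}(\Omega)\cap L^r(\Omega)$, rather than via a Galerkin basis with an $\varepsilon$-term in $\bfD\bv$; since the uniform bounds live only in $\VDpo$, $\XGpEo$ in either case, the subsequent analysis is unchanged.
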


\begin{proof}
  \textbf{1. Non-degenerate approximation and a-priori estimates:}\\
  Resorting to standard pseudo-monotone operator theory, we deduce
  that for every $n\in \mathbb{N}$, there exist
  functions~$(\mathbf{v}^n,\boldsymbol\omega^n) \in
  (\smash{W^{1,p(\cdot)}_{0,\divo}(\Omega)}\cap L^r(\Omega)) \times
  (W^{1,p(\cdot)}(\Omega)\cap L^r(\Omega))$ satisfying for every
  ${\boldsymbol\varphi\in \smash{W^{1,p(\cdot)}_{0,\divo}(\Omega)}\cap
    L^r(\Omega)}$ and
  $\boldsymbol\psi\in W^{1,p(\cdot)}(\Omega)\cap L^r(\Omega)$
  \begin{align}
    \hspace*{-0.1cm}\begin{aligned}
      &\big\langle \Sn	-\bv^n\otimes\bv^n,\bD\boldsymbol\varphi+\bR(\bphi,\boldsymbol\psi)\big\rangle\\&\quad+\tfrac 1n
      \big\langle\vert\nabla\mathbf{v}^n\vert^{p(\cdot)-2}\nabla\mathbf{v}^n,\nabla\bphi\big\rangle +\tfrac 1n
      \big\langle\big(\vert\mathbf{v}^n\vert^{p(\cdot)-2}+\vert\mathbf{v}^n\vert^{r-2}\big)\mathbf{v}^n,\bphi\big\rangle 
      \\
      & \quad+\big\langle\bN(\nabla \bomega^n,\bE)-\w^n\otimes\bv^n,\nabla\boldsymbol\psi\big\rangle \\&\quad +\tfrac{1}{n}\big\langle\vert\nabla
      \boldsymbol{\omega}^n\vert^{{p(\cdot)-2}{}}\nabla\boldsymbol{\omega}^n,\nabla\boldsymbol{\psi}\big\rangle+\tfrac 1n
      \big\langle\big(\vert{\w}^n\vert^{p(\cdot)-2}+
      \vert{\w}^n\vert^{r-2}\big){\w}^n,\bpsi\big\rangle\\&
      =\big\langle
      \ff,\boldsymbol\varphi\big\rangle+\big\langle\bell,\boldsymbol\psi\big\rangle\,,
    \end{aligned} \label{NS2}
  \end{align}
  where\footnote{We have chosen $r$ such that the convective terms
    $\skp{\bv\otimes \bv}{\nabla \bphi}$ and
    $\skp{\w\otimes \bv}{\nabla \bphi}$ define compact operators from
    $L^r(\Omega)\times L^r(\Omega)$ to $\smash{(W^{1,p^-}_0(\Omega))^*}$.}
  $r>2(p^-)'$. Moreover, there exists a constant $c>0$ (independent~of~${n\in \mathbb{N}}$) such that
  \begin{align}
    &\tfrac{1}{n} \big (\, \rho_{p(\cdot)}(\nabla \bfv^n) +\rho_{p(\cdot)}(\bfv^n) +
      \Vert \bfv^n\Vert_r^{r}+\rho_{p(\cdot)}(\nabla \w^n) +\rho_{p(\cdot)}(\w^n) +
      \Vert \w^n\Vert_r^{r}\,\big )\label{apri0}
    \\& 
    \quad+\rho_{p(\cdot)}(\bD \bfv^n) +\rho_{p(\cdot), \abs{\bE}^2}(\bfR(\bfv^n,\w^n)) +
 \rho_{p(\cdot), \abs{\bE}^2}(\nabla \w^n)\leq K(\bE,\ff,\bell)=:K\,. \notag
  \end{align}
  Under the usage of Lemma \ref{lem:unit_ball_px}, also using Korn's
  and Poincar\'e's inequality for the constant exponent
  $p^->\frac{2d}{d+2}$, we deduce from \eqref{apri0} that
	\begin{align}
		\begin{aligned}
		&
		\tfrac{1}{n}\big(\Vert \mathbf{v}^n\Vert_r+\Vert \mathbf{v}^n\Vert_{1,p(\cdot)}+\Vert\boldsymbol\omega^n\Vert_r+ \Vert\boldsymbol\omega^n\Vert_{1,p}\big)\\&\quad+\Vert\bv^n\Vert_{{\VDpo}}+\|\bfR(\bfv^n,\w^n)\|_{p(\cdot),\vert\bfE\vert^2}+\Vert\w^n\Vert_{\XGpEo }
		 \leq K\,.\end{aligned}\label{apri}
	\end{align}
       Using (\hyperlink{(S.2)}{S.2}),
        (\hyperlink{(N.2)}{N.2}),~\eqref{apri}, Assumption \ref{VssE}
        and the notation introduced~in~\eqref{eq:not}, we obtain
	\begin{equation}
		\label{eq:est-SNn}
		\begin{aligned}\Vert\mathbf{S}^n\Vert_{p'(\cdot)}+
			\Vert (\mathbf{S}^n)^\anti\Vert_{\smash{p'(\cdot),\vert \bfE\vert^{\smash{\frac{-2}{p(\cdot)-1}}}}}+
			\Vert\mathbf{N}^n\Vert_{\smash{p'(\cdot),\vert \bfE\vert^{\smash{\frac{-2}{p(\cdot)-1}}}}}\le K\,.
		\end{aligned}
	\end{equation}\newpage
	
	\textbf{2. Extraction of (weakly) convergent subsequences:}\\
	The estimates \eqref{apri}, \eqref{eq:est-SNn} and Remark \ref{hatgrad} (ii) yield~not~relabeled~subsequences  as well as  functions 
	$\smash{\bv\in \VDpo}$, $\smash{\w\in \XGpEo
	}$, $\smash{\widehat \bS \in L^{p'(\cdot)}(\Omega)}$ and
	$\smash{\widehat \bN} \in L^{p'(\cdot)}(\Omega;\vert \bfE\vert^{\smash{-\frac{2}{p(\cdot)-1}}})$ such that
	\begin{align}
		\bv^n&\rightharpoonup\bv\quad& &\text{in}\
		\VDpo&&\quad(n\to \infty)\,,\hphantom{\;}\notag
		\\[-1mm]
		\w^n&\rightharpoonup\w\quad& &\text{in}\ \XGpEo &&\quad(n\to \infty)\,,\label{konvergenz}\\[-0.5mm]
		\bfR(\bfv^n,\w^n)&\rightharpoonup\bfR(\bfv,\w)\quad& &\text{in}\ L^{p(\cdot)}(\Omega;\vert \bfE\vert^2) &&\quad(n\to \infty)\,,\notag
		\\[1mm]
		\mathbf{S}^n&\rightharpoonup \widehat \bS \;& &\text{in}\
		L^{p'(\cdot)}(\Omega)&&\quad(n\to \infty)\,,\notag
		\\[-1mm]
		\hspace*{-1.75em}(\mathbf{S}^n)^{\anti}&\rightharpoonup \widehat \bS^\anti & &\text{in}\ L^{p'(\cdot)}(\Omega;\vert\bfE\vert^{\frac{-2}{p(\cdot)-1}})&&\quad(n\to \infty)\,,\label{konvergenz18}
		\\[-1mm]
		\mathbf{N}^n&\rightharpoonup \widehat\bN \quad& &\text{in}\
		L^{p'(\cdot)}(\Omega;\vert\bfE\vert^{\frac{-2}{p(\cdot)-1}})&&\quad(n\to \infty)\,.\notag
	\end{align}

	\textbf{3. Identification of $\widehat\bS$ with $\Ss$~and~$\widehat\bN$~with~${\bfN(\hat\nabla\w,\bE)}$:}\\
	Recall that $\Omega_0\!=\! \{ x\!\in \!\Omega \hspace*{-0.1em}\mid\hspace*{-0.1em} \vert
	\mathbf{E}(x)\vert\!>\!0\}$ (cf. Assumption \ref{VssE}) and let
        ${B\!\subset\!\subset\! \Omega_0}$~be~a~ball such that
        $B'\!:=\!2B \!\subset
        \subset\!\Omega_0$. Then,
        due~to~Remark \ref{locallog}, Lemma~\ref{symgradp(x)}~and~Lemma~\ref{hatgrad}, we have that $\smash{\VDpo,\XGpEo} \hookrightarrow \! W^{1,p(\cdot
		)}(B')$. \!Therefore,~from~$\eqref{konvergenz}_{1,2}$,
              Rellich's compactness theorem and Theorem \ref{rellichp(x)},~we~deduce~that
	\begin{align}
		\begin{aligned}\label{konvergenz-w}
			\bfv^n&\rightharpoonup\bfv\quad& &\text{in}\  W^{1,p(\cdot)}(B')&&\quad(n\to \infty)\,,
			\\
			\bfv^n&\to\bfv\quad& &\text{in}\
                        L^{p(\cdot)}(B') \cap L^{q}(\Omega) \text{ and a.e. in }
			B'&&\quad(n\to \infty)\,,\\
			\w^n&\rightharpoonup\w\quad& &\text{in}\  W^{1,p(\cdot)}(B')&&\quad(n\to \infty)\,,
			\\
			\w^n&\to\w\quad& &\text{in}\  L^{p(\cdot)}(B') \cap L^{q}(\Omega) \text{ and a.e. in }
			B'&&\quad(n\to \infty)\,,
		\end{aligned}
	\end{align}
where $q\in \left[1,(p^-)^*\right)$. 	Next, let $\tau\in C_0^\infty(B')$ be such that $\chi_B \leq \tau\leq\chi_{B'}$.
	Due to
(\ref{konvergenz-w})$_{1,3}$,~it~follows~that
	\begin{align}
		\begin{aligned}\label{konvergenz-w1}
			\mathbf{u}^n&:=(\mathbf{v}^n-\mathbf{v})\tau\rightharpoonup\mathbf{0}
			& &\text{in}\ W^{1,p(\cdot)}_0(B')&&\quad(n\to \infty)\,,
			\\
			\bfpsi^n&:=(\boldsymbol\omega^n-\boldsymbol\omega)\tau
			\rightharpoonup\mathbf{0} & &\text{in}\
			W^{1,p(\cdot)}_0(B')&&\quad(n\to \infty)\,.
		\end{aligned}
	\end{align}
	Denote for $n\in \mathbb{N}$, the Lipschitz truncations  of
        ${\bu^n,\bfpsi^n\in  W^{1,p(\cdot)}_0(B')}$
	according to Theorem \ref{thm:Ltp(x)} with respect to $\hspace*{-0.1em}B'\hspace*{-0.1em}$ by $(\mathbf{u}^{n,j})_{j\in \mathbb{N}}, \hspace*{-0.1em}{(\bfpsi^{n,j})_{j\in \mathbb{N}}\!\subseteq\! W^{1,\infty}_0(B')}$.~\hspace*{-0.1em}In~\hspace*{-0.1em}\mbox{particular}, based on  \eqref{konvergenz-w1}, Theorem \ref{thm:Ltp(x)} implies that these Lipschitz truncations satisfy for
	every $j \in \setN$~and~${s \in [1,\infty)}$ 
	\begin{align}
		\begin{aligned}\label{Testfkt_2}
			\mathbf{u}^{n,j}&\rightharpoonup \mathbf{0}& &\text{in}\
			W^{1,s}_0(B')&&\quad(n\to \infty)\,,
			\\
			\mathbf{u}^{n,j}&\to \mathbf{0}& &\text{in}\
			L^{s}(B')&&\quad(n\to \infty)\,,
			\\
			\bfpsi^{n,j}&\rightharpoonup \mathbf{0}& &\text{in}\ W^{1,s}_0(B')&&\quad(n\to \infty)\,,
			\\
			\bfpsi^{n,j}&\to \mathbf{0}& &\text{in}\ 
			L^{s}(B')&&\quad(n\to \infty)\,.
		\end{aligned}
	\end{align}
	Note that $\bfpsi^{n,j}\in
	W^{1,\infty}_0(B')$, $n,j\hspace*{-0.12em}\in\hspace*{-0.12em} \mathbb{N}$, are
	suitable test-functions~in~\eqref{NS2}.~\mbox{However}, 
	$\mathbf{u}^{n,j}\hspace*{-0.1em}\in\hspace*{-0.1em}
	W^{1,\infty}_0(B')$,
	$n,j\hspace*{-0.1em}\in\hspace*{-0.1em} \mathbb{N}$, are not
	admissible in \eqref{NS2} as they are not
	di\-vergence-free.  To correct this,  we
	define 
	${ \bw^{n,j}:=\mathcal{B}_{B'}(\divo
		\mathbf{u}^{n,j})\in
		W^{1,s}_{0,\divo}(B')}$,~for~${n,j\in
		\mathbb{N}}$, where~$\smash{\mathcal{B}_{B'}:L^s_0(B')\to W^{1,s}_0(B')}$
	is the Bogovskii operator with respect~to~$B'$.  Since
	$\mathcal{B}_{B'}$ is weakly continuous, \eqref{Testfkt_2}$_{1,2}$ and
	Rellich's compactness~theorem~\mbox{imply} for
	every~${j\in \setN}$~and~${s \in (1, \infty)}$ that
	\begin{align}
		\begin{aligned}\label{Testfkt_3}
			\bw^{n,j}&\rightharpoonup \mathbf{0} & &\text{in}\ W^{1,s}_0(B')&&\quad(n\to \infty)\,,
			\\
			\bw^{n,j}&\to \mathbf{0} & &\text{in}\
			L^s(B')&&\quad(n\to \infty)\,.
		\end{aligned}
	\end{align}
	Apart from that, owing to the boundedness of $\mathcal{B}_{B'}$, since $p|_{B'}\in \mathcal{P}^{\log}(B')$ holds, (cf. Proposition \ref{bogp(x)}), one has for any~${n,j\in \setN}$ that
	\begin{equation}
		\label{eq:divo}
		\begin{aligned}
			\Vert\bw^{n,j}\Vert_{W^{1,p(\cdot)}_0(B')}&\leq
			c\,\Vert\Div\mathbf{u}^{n,j}\Vert_{L^{p(\cdot)}(B')}\,.
		\end{aligned}
	\end{equation}
	On the  basis of $\nabla \bfu^n = \nabla \bfu^{n,j}$ on the set $\set{\bfu^n =
		\bfu^{n,j} }$ (cf.~\cite[Cor.~1.43]{maly-ziemer})~and $\divo
	\bfu^n =\nabla \tau \cdot (\bv^n -\bv)$ for every~${n,j\in \setN}$, we further get for every~${n,j\in \setN}$ that
	\begin{equation}	\label{eq:divo2}
		\divergence \bfu^{n,j} = \chi_ { \set{\bfu^n \not= \bfu^{n,j} }}
		\divergence \bfu^{n,j} + \chi_ { \set{\bfu^n = \bfu^{n,j} }}\nabla
		\tau \cdot (\bv^n -\bv)\quad\text{ a.e. in }B'\,.
	\end{equation}
	Then, \eqref{eq:divo} and 	\eqref{eq:divo2} together imply for every~${n,j\in \setN}$
	\begin{align*}
		\norm{\bw^{n,j}}_{W^{1,p(\cdot)}_0(B')}\leq c\,
		\norm{\nabla \bfu^{n,j}\, \chi_ { \set{\bfu^n \not= \bfu^{n,j} } }
		}_{L^{p(\cdot)}(B')}  + c\,(\norm{\nabla \tau}_\infty ) \norm{\bv^n -\bv}_{L^{p(\cdot)}(B')}
		\,,
	\end{align*}
	which in conjunction with \eqref{eq:C18px} and \eqref{konvergenz-w}$_2$  yields for every $j \in \setN$ that
	\begin{align}
		\begin{aligned}
			\smash{\limsup_{n\to \infty} \|\bw^{n,j}\|_{W^{1,p(\cdot)}_0(B')} \leq c\,2^{\frac {-j}{p^+}}}\,.
		\end{aligned}
		\label{eq:n3.29*a}
	\end{align}
	Setting $\boldsymbol\varphi^{n,j}:=\mathbf{u}^{n,j}-\bw^{n,j}$ for all~${n,j\in \setN}$,
	we observe that $(\boldsymbol\varphi^{n,j})_{n,j\in \setN}$
          belong to $V_s(B') \cap V_{p(\cdot)}(B')$, $s \in
          (1,\infty)$, i.e., they are suitable test-functions in
	\eqref{NS2}. To use Corollary \ref{cor:ae-conv}, we  have to verify that  condition
	\eqref{mon4} is satisfied.~To~this~end, we test equation (\ref{NS2})
	with the admissible test-functions $\bphi=\boldsymbol\varphi^{n,j}$ and 
	$\bpsi=\bfpsi^{n,j}$ for every ${n,j\in \setN}$ and subtract on both sides 
	\begin{align*} 
		\big\langle\Ss, \D\bu^{n,j}+\Rr(\bu^{n,j},\bfpsi^{n,j})\big\rangle+
		\big\langle\bN(\hat\nabla\w,\bE),\nabla\bfpsi^{n,j}\big\rangle\,, \quad n,j\in \setN\,. 
	\end{align*} 
	Owing to $\bphi^{n,j}=\bu^{n,j}-\bw^{n,j}$ for every ${n,j\in \setN}$, this yields~for~every~${n,j\in \setN}$~that 
	\begin{align}
		&\big\langle\bfS^n-\Ss,\,
		\D\bu^{n,j}+\Rr(\bu^{n,j},\bfpsi^{n,j})\big\rangle \notag
		+ \big\langle\bfN^n-\bN(\hat\nabla\w,\bE),\nabla\bfpsi^{n,j}\big\rangle \notag 
		\\
		&= \big\langle \ff,\boldsymbol\varphi^{n,j}\big\rangle +
		\big\langle\bell,\bfpsi^{n,j}\big\rangle\notag 
		\\&\quad-\tfrac 1n
		\big\langle\vert\nabla\mathbf{v}^n\vert^{p(\cdot)-2}\nabla\mathbf{v}^n,\nabla \bfphi^{n,j}\big\rangle-\tfrac 1n
		\big\langle\big(\vert{\bfv}^n\vert^{p(\cdot)-2}+
		\vert{\bfv}^n\vert^{r-2}\big){\bfv}^n,\bfphi^{n,j}\big\rangle\notag\\&\quad -\tfrac{1}{n}\big\langle\vert\nabla
		\boldsymbol{\omega}^n\vert^{{p(\cdot)-2}{}}\nabla\boldsymbol{\omega}^n,\nabla\bfpsi^{n,j}\big\rangle-\tfrac 1n
		\big\langle\big(\vert{\w}^n\vert^{p(\cdot)-2}+
		\vert{\w}^n\vert^{r-2}\big){\w}^n,\bfpsi^{n,j}\big\rangle \notag
		\\
		&\quad
		+
		\big\langle\bv^n\otimes\bv^n,\nabla\bfphi^{n,j}\big\rangle
		+\big\langle\w^n\otimes\bv^n,\nabla\bfpsi^{n,j}\big\rangle\notag\\&\quad
		+\big\langle \Sn,\nabla\bw^{n,j}\big\rangle \notag
		\\
		&\quad -\big\langle\Ss, \D\bu^{n,j}+\Rr(\bu^{n,j},\bfpsi^{n,j})\big\rangle
		-\big\langle\bN(\hat\nabla\w,\bE),\nabla\bfpsi^{n,j}\big\rangle \notag
		\\
		&=:
		{\sum}_{k=1}^{11} J_k^{n,j}\,.\label{eq:diff1}
	\end{align}

	\noindent Because of $\bv\in \smash{\VDpo}$,
	$\w\in \smash{\XGpEo} $ and $\bR(\bv , \w) \in L^{p(\cdot)}(\Omega;\abs{\bE}^2)$, we
	obtain
        using~(\hyperlink{(S.2)}{S.2})~and~(\hyperlink{(N.2)}{N.2})~that
        $\bN(\hat\nabla\w,\bE) \in \smash{L^{p'(\cdot)}(\Omega;\vert
          \bfE\vert^{\smash{\frac{-2}{p(\cdot)-1}}})}$ and $\Ss \in  L^{p'(\cdot)}(\Omega)$~(cf.~\eqref{eq:est-SNn}). Using this, (\ref{Testfkt_2}) and
	(\ref{Testfkt_3}), we conclude for every $j \in \setN$ that
	\begin{align}\label{eq:j1} 
		\lim_{n\to\infty} J_1^{n,j} +J_2^{n,j} +J_{10}^{n,j} +J_{11}^{n,j} =0\,.
	\end{align}
	From  (\ref{apri}),
	(\ref{Testfkt_2}) and (\ref{Testfkt_3}), we obtain for every $j \in
	\setN$ that
	\begin{align}\label{eq:j3} 
		\lim_{n\to\infty} J_3^{n,j}+J_4^{n,j} +J_5^{n,j} +J_6^{n,j} =0\,.
	\end{align}
	Recalling \eqref{eq:not} we get, in view of \eqref{eq:est-SNn}
	and
	\eqref{eq:n3.29*a}, that for every $j\in \setN$,~it~holds
	\begin{align}
		\limsup_{n\to\infty} J_9^{n,j} \leq \limsup_{n\to\infty}\Vert \bfS^n\Vert_{p'(\cdot)} \Vert
		\nabla\bw^{n,j}\Vert_{L^{p(\cdot)}(B')}
		\leq
		K\, 2^{\frac
			{-j}{p^+}}=:\delta_j\,.\label{eq:j8}
	\end{align} 
	From 
	\eqref{konvergenz-w}$_{2,4}$, it further  follows that %
	\begin{equation}
		\begin{split}\label{conv-convec} 
			\begin{aligned}
				\mathbf{v}^n\otimes \mathbf{v}^n&\to \mathbf{v}\otimes
				\mathbf{v}&& \text{in}\ L^{s'}(B')&&\quad(n\to \infty)\,,
				\\
				\w^n\otimes \mathbf{v}^n&\to \w\otimes \mathbf{v}&&\text{in}\ L^{s'}(B')&&\quad(n\to \infty)\,,
			\end{aligned}
			\hspace*{5mm}
			\begin{aligned}
				s'\in \Big[1, \frac{(p^-)^*}{2}\Big)\,.
			\end{aligned}
		\end{split}
	\end{equation}
	Thus, combining (\ref{Testfkt_2}), (\ref{Testfkt_3}) and \eqref{conv-convec}, we find that for every $j \in \setN$
	\begin{align}\label{eq:j4} 
		\lim_{n\to\infty} J_7^{n,j} +J_8^{n,j}  =0\,.
	\end{align}
	From \eqref{eq:diff1}--\eqref{eq:j4} follows \eqref{mon4}. Thus, Corollary \ref{cor:ae-conv} yields subsequences~with
	\begin{align}
		\begin{aligned}
			\nabla\bv^n&\to\nabla\bv&&\quad\text{ a.e. in }\Omega\,,\\[-0.5mm] 
			\hat\nabla\w^n&\to\hat\nabla\w&&\quad\text{ a.e. in }\Omega\,,\\
			\w^n&\to\,\w&&\quad\text{ a.e. in }\Omega\,.\end{aligned}\label{pw}
	\end{align}
	Since $\mathbf{S}\in C^0( \mathbb{R}^{d\times d}_{\textup{sym}}\times \mathbb{R}^{d\times d}_{\textup{skew}}\times \mathbb{R}^d;\mathbb{R}^{d\times d})$ (cf.~(\hyperlink{(S.1)}{S.1})) and $\mathbf{N}\in C^0(\mathbb{R}^{d\times d}\times \mathbb{R}^d;\mathbb{R}^{d\times d})$ (cf.~(\hyperlink{(N.1)}{N.1})), we
	deduce from \eqref{pw} that
	\begin{align}\label{eq:aen}
		\begin{aligned}
			\bS^n&\to \Ss& &\text{a.e. in}\ \Omega&&\quad(n\to \infty)\,,
			\\
			\bN^n&\to  \bN(\hat\nabla\w,\bE)& &\text{a.e. in}\ \Omega&&\quad(n\to \infty)\,.
		\end{aligned}
	\end{align}
	To identify $\widehat \bS$, we now argue as in the proof of
	\mbox{\cite[Thm.~4.6  \!(cf.  \!(4.21)$_1$--(4.23)$_1$)]{erw}}, while Theorem
	\ref{pfastue} (with $G=\Omega$ and $\sigma=\abs{\bE}^2$),
	\eqref{konvergenz18}, \eqref{eq:aen} and the absolute continuity of
	Lebesgue measure with respect to the measure $\nu_{\abs{\bE}^2}$ is~used~to~identify~$\widehat \bN$. Thus, we just proved
	\begin{align}
		\begin{aligned}\label{eq:SN}
			\widehat \bS=\Ss \quad \text {and} \quad 
			\widehat \bN=\bN(\hat\nabla\w,\bE)\,.
		\end{aligned}
	\end{align}
	Now we have at our disposal everything to identify the limits of all
	but~one~term in \eqref{NS2}. Using \eqref{apri},  \eqref{konvergenz}, \eqref{konvergenz18},
	\eqref{conv-convec}$_1$,  \eqref{eq:SN} as well as  $p^-> \frac {2d}{d+2}$, we~obtain from \eqref{NS2}
	that  for every
	$\boldsymbol\varphi \in C^1_0(\Omega)$ with $\divo
	\bphi =0$ and for every $\boldsymbol\psi \in C_0^1(\Omega)$,~it~holds
	\begin{align}\begin{aligned}
			&\big\langle \Ss-\bv\otimes\bv,\bD\bphi+\bR(\bphi,\boldsymbol\psi)\big\rangle 
			\\
			& \;+\big\langle\bN(\hat\nabla\w,\bE),\nabla\boldsymbol\psi\big\rangle -\lim _{n\to
				\infty
			}\big\langle\w^n\otimes\bv^n,\nabla\boldsymbol\psi\big\rangle
			=\big\langle
			\ff,\boldsymbol\varphi\big\rangle+\big\langle\bell,\boldsymbol\psi\big\rangle
			\,.
		\end{aligned}\label{NS2aa}
	\end{align}
	Finally, we have to check whether the remaining
        limit~in~\eqref{NS2aa}~exists~and  identify it. To this end, we
        fix an arbitrary $\boldsymbol\psi \in C_0^1(\Omega)$ with
        $\nabla \boldsymbol\psi\in
        L^{\smash{\frac{q}{q-2}}}(\Omega;\vert
        \bfE\vert^{\smash{-\frac{\alpha q}{q-2}}})$ for some $q \in
        \left[1,(p^-)^*\right)$ and choose  $\Omega'$ such that $\textup{int}(\textup{supp}(\boldsymbol\psi)) \subset\subset\Omega' \subset\subset\Omega$. Due to Theorem~\ref{compactnew} and $\eqref{konvergenz}_3$, it holds
	\begin{align*}
		\w^n\rightharpoonup \w\quad\text{ in }L^q(\Omega';\vert\bE\vert^{\alpha q})\quad(n\to \infty)
	\end{align*}
	for every $\alpha\hspace*{-0.1em}\ge \hspace*{-0.1em}1+\frac{2}{p^-}$. On the other hand, due to $\nabla \boldsymbol\psi\hspace*{-0.1em}\in\hspace*{-0.1em} L^{\smash{\frac{q}{q-2}}}(\Omega;\vert \bfE\vert^{\smash{-\frac{\alpha q}{q-2}}})$~and~$\eqref{konvergenz}_2$, using H\"older's inequality, 
	we also see that
	\begin{align*}
		\nabla\boldsymbol\psi\bv^n\rightarrow\nabla\boldsymbol\psi\bv\quad\text{ in }L^{q'}(\Omega';\vert\bE\vert^{\frac{-\alpha q}{q-1}})\quad(n\to \infty)\,.
	\end{align*}
	Since $(L^q(\Omega',\vert\bE\vert^{\alpha
          q}))^*\hspace*{-0.1em}\simeq \hspace*{-0.1em}L^{q'}(\Omega',\vert\bE\vert^{\frac{-\alpha
            q}{q-1}})$, we infer that
        $$
        \lim _{n\to \infty
          }\big\langle\w^n\otimes\bv^n,\nabla\boldsymbol\psi\big\rangle =
          \big\langle\w\otimes\bv,\nabla\boldsymbol\psi\big\rangle\,,
          $$
        which looking back to \eqref{NS2aa} concludes the proof of
        Theorem~\ref{thm:main4p(x)}.
\end{proof}

\appendix

\section{Proof of Theorem \ref{poincarep(x)}}\label{sec:app}

An essential ingredient in the proof of Theorem \ref{poincarep(x)}   is the following  Gagliardo--Nirenberg interpolation inequality.

\begin{Lem}\label{gagliardo-nirenberg}
	Let $\Omega\subseteq \mathbb{R}^d$, $d\ge 2$, be a bounded $C^1$--domain and $s,r \in\left[1,\infty\right)$. Then, for every ${\bfu\in  W^{1,r}(\Omega)\cap L^s(\Omega)}$,  it holds ${\bfu\in L^q(\Omega)}$~with
	\begin{align}
		\smash{\| \bfu\|_q\leq c\,\| \bfu\|_{1,r}^\theta\|\bfu\|_s^{1-\theta}\,,}\label{GN}
	\end{align}
	where $c=c(q,r,s,\Omega)>0$, provided that $\frac{1}{q}=\theta(\frac{1}{r}-\frac{1}{d})+(1-\theta)\frac{1}{s}$~for~some~${\theta\in \left[0,1\right]}$, unless $r=d$, in which \eqref{GN} only holds for $\theta\in \left[0,1\right)$.
\end{Lem}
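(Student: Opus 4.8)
The plan is to deduce \eqref{GN} from its classical counterpart on the whole space by means of an extension operator. Since $\Omega\subseteq\R^d$ is a bounded $C^1$--domain, there exists a bounded linear extension operator $\mathcal E\colon W^{1,r}(\Omega)\to W^{1,r}(\R^d)$ which is \emph{simultaneously} bounded as a map $L^s(\Omega)\to L^s(\R^d)$ and whose range consists of functions supported in one fixed compact set $K$ with $\overline{\Omega}\subseteq K$ (e.g.\ Stein's universal extension, followed by multiplication with a fixed cut-off function $\zeta\in C_0^\infty(\R^d)$ satisfying $\zeta\equiv1$ on $\overline{\Omega}$; a Lipschitz boundary would already suffice). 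Hence there is a constant $c=c(r,s,\Omega)>0$ with $\|\mathcal E\bfu\|_{W^{1,r}(\R^d)}\le c\,\|\bfu\|_{1,r}$ and $\|\mathcal E\bfu\|_{L^s(\R^d)}\le c\,\|\bfu\|_s$ for all $\bfu\in W^{1,r}(\Omega)\cap L^s(\Omega)$. Since every step below is carried out component-wise in $\bfu$, it suffices to argue for scalar functions.

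First I would invoke the classical Gagliardo--Nirenberg interpolation inequality on $\R^d$ (Gagliardo, Nirenberg): there is a constant $c=c(q,r,s,d)>0$ such that
\begin{align}
  \|v\|_{L^q(\R^d)}\le c\,\|\nabla v\|_{L^r(\R^d)}^{\theta}\,\|v\|_{L^s(\R^d)}^{1-\theta}\label{GNRd}
\end{align}
for all $v\in C_0^\infty(\R^d)$, whenever $\tfrac1q=\theta\big(\tfrac1r-\tfrac1d\big)+(1-\theta)\tfrac1s$ for some $\theta\in[0,1]$, with the sole exception that the value $\theta=1$ must be omitted when $r=d$. A routine density argument — approximating in the norm $\|\cdot\|_{1,r}+\|\cdot\|_s$ by $C_0^\infty$--functions and passing to the limit, using Fatou's lemma (respectively weak lower semicontinuity of $\|\cdot\|_q$) on the left-hand side — then extends \eqref{GNRd} to all $v\in W^{1,r}(\R^d)\cap L^s(\R^d)$.

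Given these two ingredients the conclusion is immediate: for $\bfu\in W^{1,r}(\Omega)\cap L^s(\Omega)$ set $v:=\mathcal E\bfu\in W^{1,r}(\R^d)\cap L^s(\R^d)$; applying \eqref{GNRd} to $v$ and using $\|\bfu\|_q\le\|v\|_{L^q(\R^d)}$ together with the mapping properties of $\mathcal E$ yields
\begin{align*}
  \|\bfu\|_q\le\|v\|_{L^q(\R^d)}\le c\,\|\nabla v\|_{L^r(\R^d)}^{\theta}\|v\|_{L^s(\R^d)}^{1-\theta}\le c\,\|\bfu\|_{1,r}^{\theta}\|\bfu\|_s^{1-\theta}
\end{align*}
with $c=c(q,r,s,\Omega)>0$; in particular $\bfu\in L^q(\Omega)$. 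The admissible range of $(\theta,q)$ and the exclusion of $\theta=1$ for $r=d$ are inherited verbatim from \eqref{GNRd}.

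The only genuinely non-trivial ingredient is \eqref{GNRd} itself, and I expect its borderline and supercritical regimes $r\ge d$ to be the main obstacle: there a naive combination of Sobolev-type embeddings with Hölder's interpolation inequality either fails outright or produces a power on $\|\nabla v\|_r$ that is \emph{strictly larger} than the sharp exponent $\theta$, so a separate argument is needed. The classical one is Gagliardo's device of applying the endpoint estimate $\|w\|_{L^{d/(d-1)}(\R^d)}\le c\,\|\nabla w\|_{L^1(\R^d)}$ to powers $w=|v|^{\gamma}$ and optimizing in $\gamma$ (or iterating), and if a fully self-contained proof is wanted this is the step to spell out in detail. In the subcritical case $r<d$, by contrast, \eqref{GN} follows elementarily and directly on $\Omega$: the Sobolev embedding $W^{1,r}(\Omega)\hookrightarrow L^{r^*}(\Omega)$ on the bounded $C^1$--domain $\Omega$, with $\tfrac1{r^*}=\tfrac1r-\tfrac1d$, combined with the interpolation inequality $\|\bfu\|_q\le\|\bfu\|_{r^*}^{\theta}\|\bfu\|_s^{1-\theta}$ — valid since $\tfrac1q=\theta\tfrac1{r^*}+(1-\theta)\tfrac1s$ with $\theta\in[0,1]$ — already gives the claim with a constant depending only on $q,r,s,\Omega$.
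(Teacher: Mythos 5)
Your proposal is correct, but it follows a genuinely different route from the paper. The paper's proof is essentially by citation and stays on the bounded domain: for $r\neq d$ it simply invokes the bounded-domain Gagliardo--Nirenberg inequality from \cite[Thm.~10.1]{Fri08}, and for $r=d$ it quotes \cite{Gag59}, \cite{Nir59}, whose statement carries an extra additive term $\|\bfu\|_s$ on the right-hand side, which is then absorbed via $\|\bfu\|_s=\|\bfu\|_s^{\theta}\|\bfu\|_s^{1-\theta}\leq c\,\|\bfu\|_{1,r}^{\theta}\|\bfu\|_s^{1-\theta}$ (using that $W^{1,d}(\Omega)\hookrightarrow L^s(\Omega)$ on a bounded domain for $s<\infty$). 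You instead reduce everything to the whole-space inequality by means of a simultaneous (Stein-type) extension operator plus a cut-off and a density/Fatou argument; this is sound, since Stein's universal extension is bounded on $W^{1,r}$ and on $L^s$ at once for a Lipschitz (in particular $C^1$) boundary, the restriction to $\Omega$ is unchanged, and the admissible range of $\theta$ --- including the exclusion of $\theta=1$ when $r=d$ --- is inherited from the classical $\R^d$ statement. What your route buys is a uniform treatment of all regimes $r<d$, $r=d$, $r>d$ from a single whole-space black box, at the price of the extension machinery and the approximation step; the paper's route is shorter and needs no extension, but splits into cases and leans on a bounded-domain reference. Your closing observations are also accurate: in the subcritical case $r<d$ the inequality follows directly on $\Omega$ from $W^{1,r}(\Omega)\hookrightarrow L^{r^*}(\Omega)$ and Lyapunov/H\"older interpolation, whereas for $r\ge d$ the naive embedding-plus-interpolation argument does not reach the sharp exponent $\theta$, so the genuinely nontrivial input is the classical whole-space estimate (Gagliardo's device of applying the $L^{d/(d-1)}$--$\dot W^{1,1}$ endpoint bound to powers $|v|^{\gamma}$), which you correctly leave as the cited ingredient rather than reproving.
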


\begin{proof}
	For the case $r\neq d$, we can refer to
        \cite[Thm.~10.1]{Fri08}.~For~the~case~${r=d}$,~we can refer to
        \cite{Gag59} or \cite{Nir59}, where \eqref{GN} is proved with
        an additional $\|\bfu\|_s$ on the right-hand side.  To get rid
        of this additional term 
        we~can~use that $\|\bfu\|_s=
          \|\bfu\|_s^\theta\|\bfu\|_s^{1-\theta}$ $\leq
          c\,\|\bfu\|_{1,r}^\theta\|\bfu\|_s^{1-\theta}$.
\end{proof}

Since the Gagliardo--Nirenberg interpolation inequality only takes into account the full gradient, but we only have control over the symmetric part of the gradient, we need to switch locally from the full gradient to the symmetric gradient via Korn's second inequality.

\begin{Lem}\label{korn2}
	Let $\Omega\subseteq\mathbb{R}^d$, $d\ge 2$, be a bounded Lipschitz domain and $p\in \left(1,\infty\right)$. Then, there exists a constant $c=c(p,\Omega)>0$ such that for every $\bfu\in X^{p,p}_{\bD}(\Omega)$,  it holds ${\bfu\in W^{1,p}(\Omega)}$ with
	\begin{align*}
		\|\bfu\|_{1,p}\leq c\,\| \bfu\|_{X^{p,p}_{\bD}(\Omega)}\,.
	\end{align*}
\end{Lem}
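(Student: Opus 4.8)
The plan is to prove Korn's second inequality in two moves: establish it first for individual smooth functions, then transport it to the completion $X^{p,p}_{\bD}(\Omega)$. The only point needing care beyond the classical constant‑exponent theory is that $X^{p,p}_{\bD}(\Omega)$ is an \emph{abstract completion} rather than a space of honest functions, so I would begin by noting that it embeds canonically into $\{\bfu\in L^p(\Omega)\fdg \bD\bfu\in L^p(\Omega)\text{ distributionally}\}$: for a $\|\cdot\|_{X^{p,p}_{\bD}(\Omega)}$‑Cauchy sequence $(\bfu_n)_{n\in\mathbb N}\subseteq C^\infty(\Omega)$ one has $\bfu_n\to\bfu$ in $L^p(\Omega)$ and $\bD\bfu_n\to\bT$ in $L^p(\Omega)$ for some $\bfu,\bT$, and since $\bD$ is continuous from $L^p(\Omega)$ into $\mathcal D'(\Omega)$, the $L^p$‑limit $\bT$ must equal the distributional symmetric gradient of $\bfu$. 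It therefore suffices to show: \emph{for every $\bfw\in C^\infty(\Omega)$ with $\bfw,\bD\bfw\in L^p(\Omega)$ one has $\bfw\in W^{1,p}(\Omega)$ and $\|\nabla\bfw\|_p\le c\,(\|\bfw\|_p+\|\bD\bfw\|_p)$ with $c=c(p,\Omega)$}. Granting this, for $\bfu\in X^{p,p}_{\bD}(\Omega)$ with approximating sequence $(\bfu_n)$ I would apply the estimate to the differences $\bfu_n-\bfu_m\in C^\infty(\Omega)$ to deduce that $(\bfu_n)$ is Cauchy in $W^{1,p}(\Omega)$; its $W^{1,p}$‑limit coincides with $\bfu$ because already $\bfu_n\to\bfu$ in $L^p(\Omega)$, whence $\bfu\in W^{1,p}(\Omega)$, and passing to the limit in $\|\bfu_n\|_{1,p}\le c\,\|\bfu_n\|_{X^{p,p}_{\bD}(\Omega)}$ yields the asserted bound.

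\textbf{The smooth estimate.} For the smooth estimate I would use the classical algebraic identity
\begin{align*}
  \partial_j\partial_k w_i=\partial_k(\bD\bfw)_{ij}+\partial_j(\bD\bfw)_{ik}-\partial_i(\bD\bfw)_{jk}\,,
\end{align*}
valid for smooth $\bfw$ because mixed partial derivatives commute. Fixing indices $i,k$ and setting $v:=\partial_k w_i\in\mathcal D'(\Omega)$, one has $v\in W^{-1,p}(\Omega)$ with $\|v\|_{W^{-1,p}(\Omega)}\le\|\bfw\|_p$, since $w_i\in L^p(\Omega)$ and $\partial_k\colon L^p(\Omega)\to W^{-1,p}(\Omega)$ is bounded; moreover the displayed identity exhibits each component of $\nabla v$ as a first‑order distributional derivative of a component of $\bD\bfw\in L^p(\Omega)$, so $\nabla v\in W^{-1,p}(\Omega)$ with $\|\nabla v\|_{W^{-1,p}(\Omega)}\le c\,\|\bD\bfw\|_p$. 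I would then invoke the Ne\v{c}as negative‑norm theorem (Lions' lemma) on the bounded Lipschitz domain $\Omega$, which guarantees $v\in L^p(\Omega)$ together with $\|v\|_p\le c\,(\|v\|_{W^{-1,p}(\Omega)}+\|\nabla v\|_{W^{-1,p}(\Omega)})$; ranging over $i,k$ gives $\nabla\bfw\in L^p(\Omega)$, hence $\bfw\in W^{1,p}(\Omega)$, with the claimed estimate.

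\textbf{The main obstacle.} All the substance of the argument is concentrated in the Ne\v{c}as negative‑norm theorem: the implication ``$v,\nabla v\in W^{-1,p}(\Omega)\Rightarrow v\in L^p(\Omega)$ with the corresponding estimate'' is essentially equivalent to the existence of a bounded right inverse of $\Div\colon W^{1,p}_0(\Omega)\to L^p_0(\Omega)$ — precisely the Bogovskii‑type estimate invoked elsewhere in the paper — and for general $p\in(1,\infty)$ its proof rests on the $L^p$‑boundedness of singular integral operators and genuinely uses the Lipschitz regularity of $\partial\Omega$; the statement can fail on rougher domains. For $p=2$ this step collapses to an elementary integration by parts, and for the constant exponent $p$ the whole lemma may alternatively be quoted verbatim from the classical literature on Korn's inequality — the role of the first step above is only to adapt such a statement to the completion‑defined space $X^{p,p}_{\bD}(\Omega)$.
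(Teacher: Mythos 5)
Your proof is correct, but it takes a genuinely different route from the paper: the paper disposes of this lemma by citing \cite[Thm.~5.1.10, (5.1.17)]{mnrr}, whereas you provide a self-contained derivation via the distributional identity $\partial_j\partial_k w_i=\partial_k(\bD\bfw)_{ij}+\partial_j(\bD\bfw)_{ik}-\partial_i(\bD\bfw)_{jk}$ and the Ne\v{c}as negative-norm theorem. Both are standard. What the paper's citation buys is brevity; what your argument buys is a clear identification of exactly where the Lipschitz regularity of $\partial\Omega$ and the restriction $p\in(1,\infty)$ enter (through the negative-norm theorem, equivalently the Bogovski\v{\i} estimate), which is a genuinely useful observation in a paper where the applicability of that estimate on small balls $B'\subset\subset\Omega_0$ is essential elsewhere.

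One small remark on economy: your two-step structure (first establish the estimate for smooth $\bfw$, then transfer to $X^{p,p}_{\bD}(\Omega)$ by a Cauchy-sequence argument) is not wrong, but it is slightly redundant. The algebraic identity you quote holds in the sense of distributions for any $\bfw\in L^1_{\loc}(\Omega)$, and your Ne\v{c}as-theorem step only uses $\bfw\in L^p(\Omega)$ and $\bD\bfw\in L^p(\Omega)$, not smoothness. So once you have observed — as you correctly do at the outset — that $X^{p,p}_{\bD}(\Omega)$ embeds canonically into $\{\bfu\in L^p(\Omega)\fdg \bD\bfu\in L^p(\Omega)\}$, you can apply the negative-norm argument directly to the embedded $\bfu$, bypassing the Cauchy-sequence transfer entirely. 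The transfer step is only needed when the estimate one invokes is genuinely restricted to smooth functions, which is not the case here.
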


\begin{proof}
	See \cite[Thm.~5.1.10, (5.1.17)]{mnrr}.
\end{proof}

Aided  by Lemma~\ref{gagliardo-nirenberg} and Lemma~\ref{korn2}, we can next prove 
Theorem \ref{poincarep(x)}.

\begin{proof}[Proof of Theorem \ref{poincarep(x)}]
	The proof is relies on techniques from \cite[Lem.~3.20]{alex-diss}. We split the proof into two steps:
	
	\textbf{Step 1:} First, let $\bfu\in \mathcal{V}^{p^-,p(\cdot)}_{\textbf{D}}$ be arbitrary.
	Since $p\in C^0(\Omega)$ and $\Omega'\subset\subset \Omega$, there exists a finite 
	 covering of $ \overline{\Omega'}$ by open balls ${(B_i)_{i=1,\dots,m}}$, ${m\hspace*{-0.1em}\in\hspace*{-0.1em}\mathbb{N}}$,~with~${B_i\hspace*{-0.1em}\subset\subset \hspace*{-0.1em}\Omega}$, $i\hspace*{-0.15em}=\hspace*{-0.15em}1,\dots,m$, such that the local exponents 
	$p^+_i\hspace*{-0.2em}=\hspace*{-0.15em}\sup_{x\in B_i}{p(x)}$ and ${p^-_i\hspace*{-0.2em}=\hspace*{-0.15em}\inf_{x\in B_i}{p(x)}}$ satisfy for every ${i=1,\dots,m}$
	\begin{align}
		p^+_i<p^-_i\Big(1+\frac{2}{d}\Big)\,.\label{eq:2.30.1}
	\end{align}
	In addition, there exists some $\Omega''\subset\subset \Omega$
        with $\partial \Omega''\in C^1$ and $\bigcup
        _{i=1}^mB_i\subset \subset \Omega''$. 
	Let us fix an arbitrary ball $B_i$ for some
        $i=1,\dots,m$. There are two possibilities. First, we consider
        the case ${p_i^+\leq 2}$. Using $a^{p(x)}\leq (1+a)^2\leq
        2+2a^2$, valid for all $a\ge 0$ and $x\in B_i$, and ${\|\bfu\|_{L^2(B_i)}\leq\|\bfu\|_{L^2(\Omega'')}}$, where we also used that $\bfu \in L^2(\Omega'')$ since $\bfu\in C^\infty(\overline{\Omega''})$, we observe that
	\begin{align}
		\rho_{p(\cdot)}(\bfu\chi_{B_i})\leq 2\,\vert B_i\vert +2\|\bfu\|_{L^2(B_i)}^2\leq 2\,\vert \Omega\vert +2\,\|\bfu\|_{L^2(\Omega'')}^2\,.\label{eq:2.30.2}
	\end{align}
	Next, assume that $\smash{p_i^+> 2}$. Then, exploiting that $\smash{p_i^->
		p_i^+\frac{d}{d+2}>
		\frac{2d}{d+2}}$~(cf.~\eqref{eq:2.30.1}), i.e.,~$\frac{d-\smash{p_i^-}}{dp_i^-}<\frac{1}{2}$, we deduce that
	\begin{align}
		0<\theta_i&:=\frac{\frac{1}{2}-\frac{1}{p_i^+}}{\frac{1}{2}-\frac{d-p_i^-}{dp_i^-}}=\frac{p_i^-}{p_i^+}\frac{d(p_i^+-2)}{d(p_i^--2)+2p_i^-}\hspace*{-0.1cm}\overset{\eqref{eq:2.30.1}}{<}\hspace*{-0.075cm}\frac{p_i^-}{p_i^+}\frac{d(p_i^-+\frac{2p_i^-}{d}-2)}{d(p_i^--2)+2p_i^-}
		=\frac{p_i^-}{p_i^+}
		\leq 1\,.\label{eq:2.30.3}
	\end{align}
	Owing to $\smash{\partial B_i\in C^\infty}$, Korn's second inequality (cf.~Lemma~\ref{korn2}) yields a constant $c_i=c_i(p^-_i,B_i)>0$ such that
	\begin{align}
		{\|\bfu\|_{\smash{L^{p^-_i}(B_i)}}+\|\nabla\bfu\|_{\smash{L^{p^-_i}(B_i)}}\leq c_i\big(\|\bfu\|_{\smash{L^{p^-_i}(B_i)}}+\|\bfD\bfu\|_{\smash{L^{p^-_i}(B_i)}}\big)\,.}\label{eq:2.30.5}
	\end{align}
	Thanks to the Gagliardo--Nirenberg interpolation inequality (cf.~Lemma~\ref{gagliardo-nirenberg}), since ${\theta_i\in  \left(0,1\right)}$  satisfies $\frac{1}{p_i^{\smash{+}}}=\theta_i(\frac{1}{p_i^{\smash{-}}}-\frac{1}{d})+(1-\theta_i)\frac{1}{2}$,
	there exists a  constant
        $\smash{c_i=c_i(p^-_i,p_i^+,\Omega')}$
        such that 
	\begin{align}
		\smash{\rho_{p_i^+}(\bfu\chi_{B_i})\leq c_i\big(\|\bfu\|_{\smash{L^{p^-_i}(B_i)}}+\|\nabla\bfu\|_{\smash{L^{p^-_i}(B_i)}}\big)^{p_i^+\theta_i}\|\bfu\|^{p_i^+(1-\theta_i)}_{L^2(B_i)}\,.}\label{eq:2.30.4}
	\end{align}
	By inserting \eqref{eq:2.30.5} in \eqref{eq:2.30.4},~we~get~that
	\begin{align}
		\begin{split}
			\rho_{p_i^+}(\bfu\chi_{B_i})\leq c_i\big(\|\bfu\|_{\smash{L^{p_i^-}(B_i)}}+\|\bfD\bfu\|_{\smash{L^{p_i^-}(B_i)}}\big)^{p_i^+\theta_i}\|\bfu\|_{L^2(B_i)}^{p_i^+(1-\theta_i)}\,.
		\end{split}\label{eq:2.30.6}
	\end{align}
	Since $p_i^+\theta_i\hspace*{-0.14em}<\hspace*{-0.14em}p_i^-$ (cf.~\eqref{eq:2.30.3}), we can apply the $\varepsilon$--Young~inequality with~respect~to  $\rho_i\hspace*{-0.1em}:=\hspace*{-0.1em}p_i^-(p_i^+\theta_i)^{-1}\hspace*{-0.1em}>\hspace*{-0.1em}1$ with  $c_i(\varepsilon)\hspace*{-0.1em}:=\hspace*{-0.1em}(\rho_i\varepsilon)^{\smash{1-\rho_i'}}(\rho_i')^{-1}\hspace*{-0.1em}>\hspace*{-0.1em}0$ for all ${\varepsilon\hspace*{-0.1em}\in\hspace*{-0.1em} (0,\rho_i^{-1})}$~in~\eqref{eq:2.30.6}. In this way, using  $(a+b)^{\vphantom{1}\smash{p^-_i}}\hspace*{-0.1em}\leq\hspace*{-0.1em} 2^{p^+}(a^{\vphantom{1}\smash{p^-_i}}+b^{\vphantom{1}\smash{p^-_i}})$ and $a^{\vphantom{1}\smash{p^-_i}}\hspace*{-0.1em}\leq\hspace*{-0.1em} 2^{p^+}(1+a^{\vphantom{1}\smash{p^+_i}})$~for~all~${a,b\hspace*{-0.1em}\ge\hspace*{-0.1em} 0}$, we find that
	\begin{align}
		\rho_{p_i^+}(\bfu\chi_{B_i})&\leq c_i\varepsilon\big(\|\bfu\|_{\smash{L^{p_i^-}(B_i)}}+\|\bfD\bfu\|_{\smash{L^{p_i^-}(B_i)}}\big)^{p_i^-}+c_i(\varepsilon)\|\bfu\|_{L^2(B_i)}^{p_i^+(1-\theta_i)\rho_i'}\notag\\[-2pt]&\leq c_i\varepsilon 2^{p_i^+}\big(\rho_{p_i^-}(\bfu\chi_{B_i})+\rho_{p_i^-}(\bfD\bfu\chi_{B_i})\big)+c_i(\varepsilon)\|\bfu\|_{L^2(B_i)}^{p_i^+(1-\theta_i)\rho_i'}\label{eq:2.30.7}
		\\[-2pt]&\leq c_i\varepsilon 2^{2p_i^+}\big(\vert B_i\vert+\rho_{p_i^+}(\bfu\chi_{B_i})+\rho_{p_i^-}(\bfD\bfu\chi_{B_i})\big)+ c_i(\varepsilon)\|\bfu\|_{L^2(B_i)}^{p_i^+(1-\theta_i)\rho_i'}\,.\notag
	\end{align}
	We set $c_0:=\max_{i=1,\dots,m}{c_i}$ and $c_0(\varepsilon):=\max_{i=1,\dots,m}{c_i(\varepsilon)}$. Then, if we choose $\varepsilon:=\smash{2^{-2p^+-1}}c_0^{-1}$~and absorb $c_i\varepsilon 2^{2p_i^+}\rho_{\vphantom{1}\smash{p_i^+}}(\bfu\chi_{B_i})\hspace*{-0.1em}\leq \frac{1}{2}\rho_{\vphantom{1}\smash{p_i^+}}(\bfu\chi_{B_i})$ in the left-hand side in \eqref{eq:2.30.7}, we further infer from \eqref{eq:2.30.7} that
	\begin{align}
		\rho_{p_i^+}(\bfu\chi_{B_i})\leq 
		\vert \Omega\vert+\rho_{p_i^-}(\bfD\bfu\chi_{B_i})+2c_0(\varepsilon)\|\bfu\|_{L^2(B_i)}^{p_i^+(1-\theta_i)\rho_i'}\,.\label{eq:2.30.8}
	\end{align}
	We set $\gamma\hspace*{-0.1em}:=\hspace*{-0.1em}\max_{i=1,\dots,m}{p_i^+(1-\theta_i)\rho_i'}$ and use $\alpha^{p_i^+(1-\theta_i)\rho_i'}\hspace*{-0.1em}\leq\hspace*{-0.1em} 2^{\gamma}(1+\alpha^\gamma)$~for~all~${\alpha\hspace*{-0.1em}\ge \hspace*{-0.1em} 0}$,  $\rho_{p(\cdot)}(\bfu\chi_{B_i})\leq 2^{p^+}(\vert \Omega\vert+\rho_{\vphantom{1}\smash{p_i^+}}(\bfu\chi_{B_i}))$,  ${\rho_{\vphantom{1}\smash{p_i^-}}(\bfD\bfu\chi_{B_i})
		\leq 2^{p^+}(\vert\Omega\vert +\rho_{p(\cdot)}(\bfD\bfu\chi_{B_i}))}$  and $\rho_{p(\cdot)}(\bfD\bfu\chi_{B_i})\leq \rho_{p(\cdot)}(\bfD\bfu)$ in \eqref{eq:2.30.8}, to  arrive at
	\begin{align}
		\rho_{p(\cdot)}(\bfu\chi_{B_i})&
		\leq 2^{p^+}\big(\vert \Omega\vert+\rho_{p_i^+}(\bfu\chi_{B_i})\big)\notag\\[-4.5pt]&
		\leq 2^{p^+}\big(2\vert \Omega\vert+\rho_{p_i^-}(\bfD\bfu\chi_{B_i})+2c_0(\varepsilon)\|\bfu\|_{L^2(B_i)}^{p_i^+(1-\theta_i)\rho_i'}\big)\label{eq:2.30.9}\\[-1pt]&
		\leq 2^{p^+}\Big(2\vert \Omega\vert+2^{p^+}\big(\vert \Omega\vert +\rho_{p(\cdot)}(\bfD\bfu)\big)+c_0(\varepsilon)2^{\gamma+1}\big(1+\|\bfu\|_{L^2(\Omega'')}^{\gamma}\big)\Big)\,.\notag
	\end{align}
	If we sum up the inequalities \eqref{eq:2.30.2} and \eqref{eq:2.30.9} with respect~to~${j=1,\dots,m}$, we conclude that 
	\begin{align}
          \begin{aligned}
            \rho_{p(\cdot)}(\bfu \chi_{\Omega'})&\leq m\,
            2^{p^+}\Big (2\vert \Omega\vert + 2^{p^+}\big(\vert
            \Omega\vert + \rho_{p(\cdot)}(\bfD\bfu)\big) \Big)
            \\
            &\quad + m\,
            2^{p^+}\,c_0(\varepsilon)2^{\gamma+1}\big(1+\|\bfu\|_{L^2(\Omega'')}^{\gamma}\big)\,.
          \end{aligned}
\label{eq:2.30.10}
	\end{align}
	Moreover, since  $X^{p^-\!,p^-}_{\bD}\!(\Omega'')\!=\!W^{1,p^-}\!(\Omega'')$ with norm equivalence~by~Korn's~\mbox{second} inequality (cf. Lemma \ref{korn2}), where we, in particular, exploited that ${\partial \Omega''\in C^{0,1}}$, the Sobolev embedding theorem yields  $\smash{X^{p^-,p^-}_{\bD}(\Omega'')\hookrightarrow L^2(\Omega'')}$ since $p^-\ge\frac{2d}{d+2}$. Therefore, since $\smash{\XDp\hookrightarrow X^{p^-,p^-}_{\bD}(\Omega'')}$, we conclude from \eqref{eq:2.30.10} that there exists a constant $c(p,\Omega')>0$ such that for every $\bfu\in \smash{\mathcal{V}^{p^-,p(\cdot)}_{\bD}}$, it holds
	\begin{align*}
		\|\bfu\|_{L^{p(\cdot)}(\Omega')}\leq c(p,\Omega')\|\bfu\|_{\XDp}\,.
	\end{align*}

	\textbf{Step 2:}
	Let $\smash{\bfu\!\in\! \XDp}$ be arbitrary.
	\!Since $\smash{\mathcal{V}^{p^-,p(\cdot)}_{\bD}}$ is dense~in $\smash{\XDp}$, there is a sequence $(\bfu_n)_{n\in \mathbb{N}}\subseteq \smash{\mathcal{V}^{p^-,p(\cdot)}_{\bD}}$ such that $\bfu_n \to \bfu$ in $\smash{\XDp}$ $(n\to \infty)$. 
	According to Step $1$, there exists a constant ${c(p,\Omega')>0}$  such that for~every~${n\in \mathbb{N}}$
	\begin{align}
		\smash{\|\bfu_n\|_{L^{p(\cdot)}(\Omega')}\leq c(p,\Omega')\|\bfu_n\|_{\XDp}\,.}\label{eq:poincarep(x)folg1}
	\end{align}
	As a result, $(\bfu_n)_{n\in \mathbb{N}}$ is bounded in $ L^{p(\cdot)}(\Omega')$. Owing to the reflexivity of  $L^{p(\cdot)}(\Omega')$, there exists a cofinal subset $\Lambda\subseteq\mathbb{N}$ as well as a function $\tilde{\bfu}\in L^{p(\cdot)}(\Omega')$ such that $\bfu_n\rightharpoonup \tilde{\bfu}$ in $L^{p(\cdot)}(\Omega')$ $(\Lambda\ni n\to \infty)$. Thus, owing to the uniqueness of weak limits, we observe that $\bfu=\tilde{\bfu}\in L^{p(\cdot)}(\Omega')$. Finally, taking the limit inferior with respect to $n\to \infty$ in \eqref{eq:poincarep(x)folg1} proves that \eqref{eq:poincarep(x)} holds for every $\bfu\in \smash{\XDpo}$.
\end{proof}

\section*{References}

{\linespread{0.7}\selectfont

  \def\cprime{$'$} \def\cprime{$'$} \def\cprime{$'$}
\ifx\undefined\bysame
\newcommand{\bysame}{\leavevmode\hbox to3em{\hrulefill}\,}
\fi

\end{document}